\crefname{equation}{}{}
\crefname{conjecture}{Conjecture}{Conjectures} 
\renewcommand{\leq}{\leqslant}
\renewcommand{\geq}{\geqslant}
\newcommand{\Z}{\ensuremath{\mathbf{Z}}}
\newcommand{\N}{\ensuremath{\mathbf{N}}}
\newcommand{\R}{\ensuremath{\mathbf{R}}}
\newcommand{\E}{\ensuremath{\mathbb{E}}}
\newcommand\Supp{\operatorname{Supp}}
\newcommand{\prob}{\ensuremath{\mathbb{P}}}
\newcommand{\st}{\ensuremath{:}}
\newcommand{\ind}{\mathbf{1}}
\newcommand*\dif{\mathop{}\!\mathrm{d}}
\newcommand{\xb}{X}
\newcommand{\sth}{\textsuperscript{th }}
\newcommand{\xbd}{X_{\mathcal{D}}}
\newcommand{\xbee}{X_{\mathcal{E}}}
\newcommand{\cf}{\varphi_{Y}}
\newcommand{\Vertr}{\Vert_{\R/\Z}}
\newcommand{\nequiv}{\not\equiv}
\newcommand{\ru}[1]{e_{g-1}(#1)}
\newcommand{\Ru}[1]{e_{g-1}\Big(#1\Big)}
\newcommand{\fewnz}{\mathcal{B}}
\newcommand{\sx}[1]{s(#1)}
\newcommand{\V}[1]{\Vert #1 \Vertr}
\newcommand{\floor}[1]{\left\lfloor #1 \right\rfloor}
\newcommand{\sd}[1]{s_{g}(#1)}
\newtheorem{prop}{Proposition}[section]
\newtheorem{lemma}[prop]{Lemma}
\newtheorem{theorem}[prop]{Theorem}
\newtheorem{cor}[prop]{Corollary}
\newtheorem{claim}[prop]{Claim}
\newtheorem{defn}[prop]{Definition}
\newtheorem*{decoupling-rpt}{\cref{decoupling lemma}}
\newtheorem*{post-canc-rpt}{\cref{post cancellation}}
\newtheorem*{pre-canc-rpt}{\cref{pre block cancellation}}
\newtheorem*{llt-rpt}{\cref{Local limit theorem application version}}
\newtheorem*{lem32-rpt}{\cref{psi lemma}}
\newtheorem*{fm-rpt}{\cref{FM bound 2}}
\theoremstyle{remark}
\newtheorem*{remark}{Remark}
\newcommand{\md}[1]{\ensuremath{(\operatorname{mod}\, #1)}}
\newcommand{\nocontentsline}[3]{}
\let\origcontentsline\addcontentsline
\newcommand\stoptoc{\let\addcontentsline\nocontentsline}
\newcommand\resumetoc{\let\addcontentsline\origcontentsline}
\title{Niven numbers are an asymptotic basis of order 3}
\author{Kate Thomas}
\address{Mathematical Institute\\Woodstock Road\\ Oxford OX2 6GG, UK}
\email{katherine.thomas@maths.ox.ac.uk}
\begin{document}

\begin{abstract}
    A base-$g$ Niven number is a natural number divisible by the sum of its base-$g$ digits. We show that, for any $g\geq 3$, all sufficiently large natural numbers can be written as the sum of three base-$g$ Niven numbers. We also give an asymptotic formula for the number of representations of a sufficiently large integer as the sum of three integers with fixed, close to average, digit sums.
\end{abstract}

\maketitle
\tableofcontents
\section{Introduction}
A central question in additive number theory is to establish whether a given set of integers $\mathcal{S}$ is an \emph{asymptotic basis} for the integers, that is, to determine whether there exists a natural number $k$ such that any sufficiently large integer can be written as the sum of $k$ elements of $\mathcal{S}$. Here, $k$ denotes the \textit{order} of the basis.

Famously, Lagrange's theorem gives that the squares are a basis of order 4, and Waring's problem, solved by Hilbert, shows that $k$\sth powers are also an additive basis. Some interesting variants of Waring's problem consider $k$\sth powers of integers which have restrictions on their digits in some base. For example, Pfeiffer and Thuswaldner \cite{pfeiffer2007waring} show that the $k$\sth powers of integers with certain congruence conditions on their sums of digits in different bases is an asymptotic basis. More recently, Green \cite{green2025waring} established that, given any two digits which are coprime, the integers whose base-$g$ expansions consists of only these digits satisfy Waring's problem. Further references for additive bases coming from sets of integers with digit restrictions are given in the introduction of \cite{sanna2021additive}.

A \emph{base-$g$ Niven number} is a natural number that is divisible by its base-$g$ sum of digits. Such integers are also referred to as \textit{Harshad numbers}. It is shown in \cite{de2003counting}, and independently in \cite{mauduit2005distribution}, that the number of base-$g$ Niven numbers less than $x$ is asymptotically $\eta_{g}x/\log x$ for some constant $\eta_g>0$. 

It is conjectured that the set of base-$g$ Niven numbers is an asymptotic basis of order 2. In \cite{sanna2021additive} Sanna established, conditionally upon a certain generalisation of the Riemann Hypothesis, that the set of base-$g$ Niven numbers is an asymptotic basis with order growing linearly in $g$. Our result is the following unconditional statement.
\begin{theorem}\label{Main theorem 1}
   For any $g\geq 3$, the set of base-$g$ Niven numbers is an asymptotic basis of order 3.
\end{theorem}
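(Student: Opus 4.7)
The plan is to derive \cref{Main theorem 1} from the companion asymptotic formula for
\[
r_3(n; s_1, s_2, s_3) := \#\{(x_1, x_2, x_3) : x_1 + x_2 + x_3 = n,\ \sx{x_i} = s_i\}
\]
(the other main result advertised in the abstract), which should give $r_3(n; s_1, s_2, s_3) \asymp_g n^2/(\log n)^{3/2}$ for triples $(s_1, s_2, s_3)$ lying close to the mean digit sum $S := \tfrac{g-1}{2}\log_g n$, via a local limit theorem for the digit-sum statistic. The Niven divisibility conditions $s_i \mid x_i$ are then to be grafted on by additive characters.

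For each large $n$ I would first choose a triple $(s_1, s_2, s_3)$ in a short window about $S$ meeting two conditions: the necessary congruence $s_1 + s_2 + s_3 \equiv n \pmod{g-1}$ (forced by $x \equiv \sx{x} \pmod{g-1}$), and the compatibility $\gcd(s_1, s_2, s_3) = 1$, which ensures the divisibility system $s_i \mid x_i,\ \sum x_i = n$ is soluble for \emph{every} $n$ irrespective of its arithmetic; both are easily arranged in any interval about $S$ of length $\gg \log\log n$. With such a triple fixed, I want to bound from below
\[
r_3^{N}(n; s_1, s_2, s_3) := \#\{(x_1, x_2, x_3) : \textstyle\sum x_i = n,\ \sx{x_i} = s_i,\ s_i \mid x_i\},
\]
and use the expansion $\ind[s_i \mid x_i] = s_i^{-1}\sum_{a_i=0}^{s_i-1} e(a_i x_i / s_i)$ to write
\[
r_3^{N}(n; s_1, s_2, s_3) = \frac{r_3(n; s_1, s_2, s_3)}{s_1 s_2 s_3} + \frac{1}{s_1 s_2 s_3} \sum_{(a_1,a_2,a_3) \ne 0} T(a_1, a_2, a_3),
\]
where $T(a_1, a_2, a_3) := \sum_{\sum x_i = n,\ \sx{x_i} = s_i} \prod_i e(a_i x_i / s_i)$ is the character-twisted count.

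The main term, by the companion asymptotic, has size $\asymp_g n^2/(\log n)^{9/2}$ and is positive. The work lies in the error: since there are $\asymp (\log n)^3$ non-principal character triples, I need each $T(a_1, a_2, a_3)$ to save at least a factor of $(\log n)^3$ over the trivial bound. Each $T(a_1, a_2, a_3)$ factors through one-dimensional exponential sums $\sum_{x<N,\ \sx{x}=s} e(\alpha x)$ with rational frequencies $\alpha = a_i/s_i$, so the key analytic input is a Gelfond-type cancellation bound for these sums, uniform in the nonzero rational twist $\alpha$, which I would hope to extract by a refinement of the local-limit / Fourier machinery that proves the main asymptotic.

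The principal obstacle is precisely this uniform bound. The twists $\alpha = a_i/s_i$ sit at scale $\asymp 1/\log n$, perilously close to the origin, so one cannot simply treat them as minor arcs; instead one must carefully isolate a small neighborhood of zero in which a secondary stationary-phase contribution might appear, and show either that it cancels after summing over $(a_1, a_2, a_3)$ (using $\gcd(s_1, s_2, s_3) = 1$), or that it is strictly smaller than the $a=0$ term. Away from this neighborhood, genuine power-saving cancellation for the digit-sum exponential sum should finish the job. Once this uniform cancellation is achieved, the main term dominates, $r_3^{N}(n; s_1, s_2, s_3) > 0$, and every sufficiently large $n$ is a sum of three base-$g$ Niven numbers.
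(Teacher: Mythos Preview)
Your overarching strategy matches the paper's: prove the asymptotic for $r_3(n;s_1,s_2,s_3)$, expand the divisibility indicators by additive characters, and show the off-principal twists are negligible. But your execution plan for the error term has real gaps.

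First, the condition $\gcd(s_1,s_2,s_3)=1$ is too weak. The paper requires the $k_i$ to be \emph{pairwise} coprime \emph{and} each coprime to $g$ (condition \eqref{k cond N}). Both are essential. Coprimality to $g$ is what guarantees that a nonzero rational $j/k$ has $\gg K/\log K$ nonzero digits in its centred base-$g$ expansion (Lemma~\ref{1/k reciprocal}); without it the expansion can terminate and no cancellation is forced. Pairwise coprimality is what ensures, for any nonprincipal triple $(j_1,j_2,j_3)$ and any $\theta$, that at least one of the three shifted points $\theta+j_i/k_i$ lands outside the ``few nonzero digits'' set $\mathcal{B}$ (Proposition~\ref{translate bad arcs overlap}); with only $\gcd(s_1,s_2,s_3)=1$ this fails.

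Second, your twisted count $T(a_1,a_2,a_3)$ does \emph{not} factor through one-dimensional sums, because of the constraint $\sum x_i=n$. The paper writes $T$ as an integral $\int_0^1\prod_i f_i(\theta+j_i/k_i)e(-M\theta)\,d\theta$ and then uses H\"older plus Parseval to reduce to a pointwise product bound $\sup_\theta\prod_i|f_i(\theta+j_i/k_i)|$; it is this sup that Proposition~\ref{translate bad arcs overlap} plus the Fouvry--Mauduit bound controls. Your description of ``Gelfond-type cancellation uniform in the nonzero rational twist'' misses the actual mechanism: the rational twists $j_i/k_i$ sit at scale $1/\log n$, yes, but the point is not stationary phase or a secondary main term---it is that $(g-1)j_i/k_i$ has \emph{many} nonzero base-$g$ digits, which places it firmly in the regime where the Fouvry--Mauduit bound (Theorem~\ref{FM bound 2}) saves an arbitrary power of $K$. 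Your speculation about ``cancellation after summing over $(a_1,a_2,a_3)$'' is not how the argument goes and would not work with only $\gcd(s_1,s_2,s_3)=1$.
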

We first count, via the circle method, the number of representations of a sufficiently large integer $M$ as the sum of three integers with a near-average digit sum. By showing that a proportion of such representations have each summand being a Niven number, we provide a lower bound for the number of representations of $M$ as the sum of three Niven numbers. To state these results more precisely, we need the following notation.

Let $\sd{n}$ denote the base-$g$ digit sum of $n$. Let $K\geq 1$ be a sufficiently large integer, and let $M\in (g^{K-1},g^{K}]$. For $k\in \N$, let
\[S_{g}(k)\coloneq \{n<g^{K}\st \sd{n}=k\} \textrm{ and }\mathcal{N}_{g}(k)\coloneq \{n\in S(k) \st k\mid n\}.\]
Thus $\mathcal{N}_{g}(k)$ is the set of Niven numbers in $S_{g}(k)$, that is, Niven numbers of a certain size with fixed digit sum. Let \begin{equation*}
    \mu_{K}\coloneq \frac{(g-1)K}{2},
\end{equation*}
which is the average digit sum of $n<g^{K}$. For a fixed choice of $k_{1},k_{2},k_{3}\in \N$, let $S_{i}\coloneq S_{g}(k_{i})$ for $i=1,2,3$, and let $\mathcal{N}_{i}\coloneq \mathcal{N}_{g}(k_{i})$ for $i=1,2,3$. Suppose that $k_{1},k_{2},k_{3}\in \N$ are such that
\begin{equation}\label{k cond S}
    |k_{i}-\mu_{K}|\leq C_{g} \textrm{ and } k_{1}+k_{2}+k_{3}\equiv M\md{g-1},
\end{equation}
where $C_g := g(g-1)\prod_{p\leq 10g^{2}}p$, and the product is over primes.  The specific choice of constant here is unimportant, and any sufficiently large value would do, but this value is large enough to ensure that $k_{i}$ with further desired properties exist. Let $r_{S_{1}+S_{2}+S_{3}}(M)$ be the number of representations of $M=s_{1}+s_{2}+s_{3}$, for $s_{i}\in S_{i}$. Our main result is the following theorem, from which we can later deduce the corresponding result for $\mathcal{N}_{i}\subset S_{i}$.
\begin{theorem}\label{Main theorem S count} Let $g,K$ and $M$ be integers such that $g\geq 3$, $K$ is sufficiently large in terms of $g$, and $M\in (g^{K-1},g^{K}]$. Suppose that $k_{1},k_{2},k_{3}$ satisfy \cref{k cond S}. Then
    \[r_{S_{1}+S_{2}+S_{3}}(M)=\frac{(g-1)M^{2}}{2(2\pi\sigma^{2}K)^{3/2}}(1+O_{g}((\log K)^{4}K^{-1/4})),\]
where $\sigma^{2}=(g^{2}-1)/12$.
\end{theorem}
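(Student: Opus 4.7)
The plan is a circle-method computation paired with a local central limit theorem for base-$g$ digit sums. By orthogonality,
\[r_{S_1+S_2+S_3}(M) = \int_0^1 f_1(\alpha) f_2(\alpha) f_3(\alpha) e(-M\alpha) \, d\alpha,\]
where $f_i(\alpha) := \sum_{n < g^K,\, \sd{n} = k_i} e(n\alpha)$. A second application of orthogonality to the digit-sum constraint gives
\[f_i(\alpha) = \int_0^1 \prod_{j=0}^{K-1} \phi(g^j \alpha + \beta)\, e(-k_i \beta)\, d\beta, \qquad \phi(\theta) := \sum_{d=0}^{g-1} e(d\theta),\]
so $r_{S_1+S_2+S_3}(M)$ becomes a four-dimensional integral over $(\alpha,\beta_1,\beta_2,\beta_3)\in[0,1]^4$.

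I would dissect the $\alpha$-variable into major arcs $\mathcal{M}$ and minor arcs $\mathfrak{m}$. The relevant small-denominator rationals here are $\alpha = a/(g-1)$ with $a\in\{0,1,\ldots,g-2\}$: the congruence $\sd{n}\equiv n \pmod{g-1}$ forces $f_i(a/(g-1)) = e(k_i a/(g-1))\,|S_i|$ with no cancellation, whereas at other rationals $a/q$ with $q\nmid g-1$ one expects strong cancellation. On $\mathfrak{m}$ I combine a power-saving pointwise bound for $f_i$ (the paper's Fouvry--Mauduit-type $\ell^\infty$ estimate) with the $L^2$ identity $\int_0^1 |f_i(\alpha)|^2 \, d\alpha = |S_i|$ and Cauchy--Schwarz to get
\[\int_{\mathfrak{m}} |f_1 f_2 f_3| \, d\alpha \leq \sup_{\alpha\in\mathfrak{m}} |f_1(\alpha)|\cdot |S_2|^{1/2} |S_3|^{1/2},\]
which is smaller than the predicted main term by a factor $(\log K)^4 K^{-1/4}$ as soon as $\sup_{\mathfrak{m}} |f_1(\alpha)| \leq |S_1|(\log K)^4 K^{-1/4}$.

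On the major arcs, I would use the paper's decoupling lemma and local limit theorem for the joint distribution of $(n, \sd{n})$ to establish the approximation
\[f_i\bigl(a/(g-1) + \delta\bigr) \approx e(k_i a/(g-1)) \cdot \frac{|S_i|}{g^K}\sum_{n < g^K} e(n\delta)\]
for $|\delta|$ small --- this reflects the fact that $n$ uniform on $S_i$ agrees, to leading order in its low Fourier modes, with $n$ uniform on $[0,g^K)$. Multiplying three such expressions, integrating against $e(-M(a/(g-1)+\delta))$, and summing over $a$ collapses the oscillatory phases $e((k_1+k_2+k_3-M)a/(g-1))$ to $+1$ by the congruence in \cref{k cond S}, and the $\delta$-integral recovers the unrestricted triple convolution count
\[\int_0^1 \biggl(\sum_{n<g^K} e(n\delta)\biggr)^3 e(-M\delta)\,d\delta = \frac{M^2}{2} + O(M).\]
This produces the main term $(g-1) (|S_1||S_2||S_3|/g^{3K}) \cdot M^2/2 = (g-1)M^2/(2(2\pi\sigma^2 K)^{3/2})(1+o(1))$, using that the local CLT gives $|S_i| = g^K/\sqrt{2\pi\sigma^2 K}(1 + O(K^{-1/2}))$ uniformly for $|k_i - \mu_K| = O_g(1)$.

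The principal obstacle is the minor-arc $\ell^\infty$ bound for $f_i$: one needs a power saving over the trivial bound $|S_i|$, uniformly in $\alpha$ away from $\{a/(g-1) : a\in\{0,\ldots,g-2\}\}$, and this bound has to be uniform in the auxiliary Fourier variable $\beta$ so that it can be transferred cleanly from $\prod_j \phi(g^j\alpha + \beta)$ to $f_i$ after integration in $\beta$. This is the content of the paper's FM-type bound together with the pre- and post-block cancellation lemmas, which presumably engineer Diophantine cancellation within blocks of base-$g$ digits to produce the required power saving.
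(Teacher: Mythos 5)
Your plan follows essentially the same route as the paper: the same circle-method identity, major arcs being short intervals around the $g-1$ rationals $j/(g-1)$, minor arcs handled by an $\ell^\infty$ bound combined with Cauchy--Schwarz and Parseval, and major arcs handled by a local limit theorem approximation of $f_i$ near $0$ (transported to the other arcs by $f_i(\theta+j/(g-1))=e(k_ij/(g-1))f_i(\theta)$), with the phases collapsing via $k_1+k_2+k_3\equiv M \pmod{g-1}$ and the singular integral contributing $M^2/2$. The auxiliary $\beta$-integral representation of $f_i$ is also how the Fouvry--Mauduit bound is obtained in the paper's appendix, so this is not a genuinely different decomposition.

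There is, however, one concrete quantitative slip that would sink the argument as stated: your sufficiency threshold for the minor arcs is too weak by a factor $K^{1/2}$. After Cauchy--Schwarz and Parseval the minor arcs contribute at most $\sup_{\theta\in\mathfrak{m}}|f_1(\theta)|\cdot(|S_2||S_3|)^{1/2}\asymp_g \sup_{\mathfrak{m}}|f_1|\cdot g^{K}K^{-1/2}$, while the main term is $\asymp_g g^{2K}K^{-3/2}$; so with $\sup_{\mathfrak{m}}|f_1|\leq |S_1|(\log K)^4K^{-1/4}\asymp g^{K}(\log K)^4K^{-3/4}$ the minor-arc contribution is $\asymp g^{2K}(\log K)^4K^{-5/4}$, which \emph{exceeds} the main term rather than being smaller by $(\log K)^4K^{-1/4}$. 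What is actually needed is $\sup_{\mathfrak{m}}|f_1|\ll_g g^{K}K^{-5/4}$, i.e.\ a saving of $K^{-3/4}$ (not $K^{-1/4}$) relative to $|S_1|\asymp g^{K}K^{-1/2}$; this is exactly what \cref{minor arc bound} provides, and obtaining it is the hard part of the paper --- the Fouvry--Mauduit estimate alone only handles $\theta$ for which $(g-1)\theta$ has many non-zero centred digits, with the decoupling and block-cancellation analysis (plus the asymptotic near translates of $1/(g-1)$) needed for the rest, as you acknowledge. One further small point: the $\delta$-integral on the major arcs runs only over $|\delta|\leq\varepsilon$, so to ``recover the unrestricted triple convolution count'' you must extend the truncated integral and bound the tail using $|\sum_{n<g^K}e(n\delta)|\ll\min(g^K,\V{\delta}^{-1})$, which forces the major-arc width to be a sufficiently large power of $K$ times $g^{-K}$ (the paper takes $\varepsilon=K^{3/4}g^{-K}/(g-1)$); this is routine but should be stated, since it is what balances the truncation error against the error in the local-limit approximation of $f_i$.
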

Let $r_{\mathcal{N}_{1}+\mathcal{N}_{2}+\mathcal{N}_{3}}(M)$ be the number of representations of $M=n_{1}+n_{2}+n_{3}$, where $n_{i}\in \mathcal{N}_{i}$ for $i=1,2,3$. In order to relate the quantity $r_{S_{1}+S_{2}+S_{3}}(M)$ to $r_{\mathcal{N}_{1}+\mathcal{N}_{2}+\mathcal{N}_{3}}(M)$, we require some further conditions on the choice of $k_{1},k_{2},k_{3}$. Throughout, $(a,b)$ denotes the greatest common divisor of $a$ and $b$. Suppose that $k_{1},k_{2},k_{3}\in\N$ also satisfy, in addition to \cref{k cond S},
\begin{align}\label{k cond N}
  (k_{i},k_{j})=1 \textrm{ and }(k_{i},g)=1 \textrm{ for }i,j=1,2,3 \textrm{ and } i\neq j.
 \end{align}
Let 
\[c_{g}(k_{1},k_{2},k_{3})\coloneq \frac{4}{(g-1)^{2}}\prod_{i=1}^{3}(g-1,k_{i}).\]
Then we have the following theorem, from which \cref{Main theorem 1} is a corollary after showing that such a choice of $k_{1},k_{2}$ and $k_{3}$ exist. 
\begin{theorem}\label{Main theorem N count}
Let $g,K$ and $M$ be integers such that $g\geq 3$, $K$ is sufficiently large in terms of $g$, and $M\in (g^{K-1},g^{K}]$. Suppose that $k_{1},k_{2},k_{3}$ fulfil \cref{k cond S} and \cref{k cond N}, then
\[r_{\mathcal{N}_{1}+\mathcal{N}_{2}+\mathcal{N}_{3}}(M)=\frac{(g-1)^{2}c_{g}(k_{1},k_{2},k_{3})}{4k_{1}k_{2}k_{3}}r_{S_{1}+S_{2}+S_{3}}(M)+O_{g}(M^{2}K^{-29/6}).\]
In particular,
 \[r_{\mathcal{N}_{1}+\mathcal{N}_{2}+\mathcal{N}_{3}}(M)=c_{g}(k_{1},k_{2},k_{3})\frac{M^{2}}{(2\pi\sigma^{2})^{3/2}K^{9/2}}+O_{g}(M^{2}(\log K)^{4}K^{-19/4}),\]
 where $\sigma^{2}=(g^{2}-1)/12$.
\end{theorem}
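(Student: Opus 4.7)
The plan is to detect the divisibility conditions $k_{i}\mid n_{i}$ via additive characters and reduce to a weighted count of representations $s_{1}+s_{2}+s_{3}=M$ with $s_{i}\in S_{i}$. By orthogonality,
\begin{equation*}
    r_{\mathcal{N}_{1}+\mathcal{N}_{2}+\mathcal{N}_{3}}(M)=\frac{1}{k_{1}k_{2}k_{3}}\sum_{a_{1}=0}^{k_{1}-1}\sum_{a_{2}=0}^{k_{2}-1}\sum_{a_{3}=0}^{k_{3}-1}T(a_{1},a_{2},a_{3}),
\end{equation*}
where $T(a_{1},a_{2},a_{3}):=\sum_{s_{1}+s_{2}+s_{3}=M,\,s_{i}\in S_{i}} e(a_{1}s_{1}/k_{1}+a_{2}s_{2}/k_{2}+a_{3}s_{3}/k_{3})$. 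I would split the outer triples into \emph{main} triples, namely those for which $k_{i}\mid a_{i}(g-1)$ for every $i$, and the rest.

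For a main triple, since every $s\in S_{i}$ satisfies $s\equiv \sd{s}=k_{i}\pmod{g-1}$ (because $g^{j}\equiv 1\pmod{g-1}$), the hypothesis $k_{i}\mid a_{i}(g-1)$ forces $e(a_{i}s/k_{i})\equiv 1$ on $S_{i}$, so $T(a_{1},a_{2},a_{3})=r_{S_{1}+S_{2}+S_{3}}(M)$. A short divisor calculation, writing $d_{i}:=(k_{i},g-1)$ and $k_{i}':=k_{i}/d_{i}$ and noting $(k_{i}',(g-1)/d_{i})=1$, shows that $k_{i}\mid a_{i}(g-1)$ is equivalent to $k_{i}'\mid a_{i}$, giving exactly $d_{i}=(k_{i},g-1)$ admissible values of $a_{i}\in\{0,\dots,k_{i}-1\}$. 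Summing over main triples yields
\begin{equation*}
    \frac{\prod_{i}(g-1,k_{i})}{k_{1}k_{2}k_{3}}\,r_{S_{1}+S_{2}+S_{3}}(M)=\frac{(g-1)^{2}c_{g}(k_{1},k_{2},k_{3})}{4k_{1}k_{2}k_{3}}\,r_{S_{1}+S_{2}+S_{3}}(M),
\end{equation*}
which is precisely the main term in the first formula of the theorem.

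The real work is bounding the non-main contribution. Setting $f_{i}(\alpha):=\sum_{s\in S_{i}}e(\alpha s)$ and using $\ind(s_{1}+s_{2}+s_{3}=M)=\int_{0}^{1}e(\beta(s_{1}+s_{2}+s_{3}-M))\,d\beta$, one finds
\begin{equation*}
    T(a_{1},a_{2},a_{3})=\int_{0}^{1}e(-\beta M)\prod_{i=1}^{3}f_{i}(\beta+a_{i}/k_{i})\,d\beta.
\end{equation*}
For a non-main triple, at least one $a_{i}/k_{i}$ is bounded away from the rationals $\{j/(g-1):0\le j<g-1\}$ near which $|f_{i}|$ attains its peaks of size $\asymp |S_{i}|$. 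The coprimality hypotheses $(k_{i},g)=1$ and $(k_{i},k_{j})=1$ are designed precisely to keep the bad shifts $a_{i}/k_{i}$ away from the rationals produced by the base-$g$ digit structure and to prevent such shifts from aligning across the three indices. I would then apply the Fourier-analytic tools developed earlier in the paper --- the decoupling and cancellation estimates \cref{decoupling lemma}, \cref{pre block cancellation}, and \cref{post cancellation}, the moment bound \cref{FM bound 2}, and the local limit theorem \cref{Local limit theorem application version} --- to control each factor $f_{i}(\beta+a_{i}/k_{i})$ as $\beta$ varies, ultimately producing the aggregate error $O_{g}(M^{2}K^{-29/6})$.

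The second asymptotic in the theorem then follows by inserting the formula of \cref{Main theorem S count} into the expression above and using $k_{1}k_{2}k_{3}=\mu_{K}^{3}(1+O_{g}(1/K))$ with $\mu_{K}=(g-1)K/2$: the multiplicative error $(\log K)^{4}K^{-1/4}$ is converted to the absolute error $O_{g}(M^{2}(\log K)^{4}K^{-19/4})$, which absorbs the smaller $M^{2}K^{-29/6}$ contribution. The principal obstacle is the error estimate above: since there are $\asymp K^{3}$ triples in total, the non-main triples must contribute at most $O_{g}(M^{2}K^{-29/6})$ on average, and this requires genuinely quantitative Fourier decay of the digit-restricted exponential sums $f_{i}$ off their peaks, uniformly over the arithmetically structured frequencies $a_{i}/k_{i}$ allowed by \cref{k cond N}.
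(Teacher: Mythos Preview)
Your overall architecture matches the paper: detect $k_i\mid n_i$ by characters, split the resulting sum over $(a_1,a_2,a_3)$ into ``main'' triples (those with $k_i\mid a_i(g-1)$ for all $i$, the paper's set $\mathcal J$) and the rest, and observe that each main triple contributes exactly $r_{S_1+S_2+S_3}(M)$. Your count of main triples and the resulting constant $\prod_i(g-1,k_i)/(k_1k_2k_3)$ are correct, and your deduction of the second asymptotic from \cref{Main theorem S count} is fine.

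The gap is in the error estimate. Your sentence ``at least one $a_i/k_i$ is bounded away from the rationals $\{j/(g-1)\}$'' is true by definition of a non-main triple, but it does not by itself control the integral $\int_0^1 e(-\beta M)\prod_i f_i(\beta+a_i/k_i)\,d\beta$: as $\beta$ ranges over $[0,1]$, each shifted argument $\beta+a_i/k_i$ still passes through points where $|f_i|$ is of size $|S_i|$. The issue is whether this can happen \emph{simultaneously} for all three indices, and you do not supply a mechanism that rules this out. You also invoke \cref{decoupling lemma}, \cref{pre block cancellation}, and \cref{post cancellation}, but none of these is used in the proof of \cref{Main theorem N count}; they belong to the minor-arc analysis for \cref{Main theorem S count}.

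What the paper actually does is the following. It isolates the set $\mathcal B=\{\theta:w_K((g-1)\theta)\le\ell\}$ on which \cref{FM very good g} fails, and proves (\cref{translate bad arcs overlap}) that for a non-main triple the three translates $\mathcal B-a_i/k_i$ have empty common intersection. The key input is arithmetic, not analytic: if two translates $\beta+a_s/k_s$ and $\beta+a_t/k_t$ both lay in $\mathcal B$, then by approximate subadditivity of $w_K$ (\cref{wk additive short ver}) the difference $(g-1)(a_sk_t-a_tk_s)/(k_sk_t)$ would also have $O_g(\log K)$ nonzero centred digits; but \cref{1/k reciprocal} shows that any reduced fraction $j/k$ with $(k,g)=1$ and $k\asymp K^2$ has $\gg K/\log K$ nonzero digits among the first $K$. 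The coprimality conditions \cref{k cond N} guarantee precisely that this difference is such a fraction. With the non-overlap in hand, for every $\beta$ one factor satisfies $|f_i(\beta+a_i/k_i)|\ll g^KK^{-12}$ by \cref{FM very good g}; H\"older and Parseval then give $G(a_1,a_2,a_3)\ll g^{2K}K^{-29/6}$. This digit-count-of-rationals argument is the missing idea in your sketch.
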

Our methods can be adapted to show the analogous versions of \cref{Main theorem 1}, \cref{Main theorem S count} and \cref{Main theorem N count} for base 2, however there is one main technical difference. For readability, we do not give the details here, but comment on the necessary changes in \cref{Section: minor arc}.
\subsection{Notation}\label{notation}
Throughout, we consider $g$ to be fixed. Let $\Vert x \Vertr$ denote the distance of a real number $x$ to the nearest integer. We write $e(x)$ for $\exp(2\pi ix)$, and $\ru{x}$ for $e(x/(g-1))$. We require the quantity $\ell$, defined to be
\begin{equation}\label{l choice}
    \ell\coloneqq \lceil 384g^{3}\log K\rceil.
\end{equation}
Throughout, $\sigma=\sqrt{(g^{2}-1)/12}.$
\stoptoc
\section*{Acknowledgments}
\resumetoc
The author would like to thank Ben Green and Thomas Bloom for their invaluable advice and encouragement.
\section{An overview of the proof}
This section outlines the key steps to showing \cref{Main theorem 1}. The main work is to establish \cref{Main theorem S count}, and then to show that a sufficient number of these representations are of integers which are actually Niven numbers.

The primary tool for showing \cref{Main theorem S count} is the circle method. Notably in our application there are few major arcs; to show the base-$g$ result, we take $g-1$ major arcs, these are short intervals around the rationals $j/(g-1)$ for $j=0,\ldots, g-2$. The need to consider rationals of this form stems from the relation $\sd{n}\equiv n\md{g-1}$, which is the only congruence obstruction to \cref{Main theorem S count}. The contribution from these major arcs is handled in \cref{Section: major arc}, giving the main term in \cref{Main theorem S count}. We establish a uniform pointwise bound for the minor arcs; existing results cover a subset of the minor arcs, but are not sufficiently strong at certain minor arc points to give \cref{Main theorem S count}. This bound is proved in \cref{Section: minor arc}, and relies on a local limit theorem, the necessary consequences of which are given in \cref{section Psi}. 

In \cref{section divisibility}, we show \cref{Main theorem N count} by linking the number of representations of an integer as the sum of three integers with near-average digit sum, to representations where each summand is also a Niven number. 

Finally a choice of $k_{1},k_{2}$ and $k_{3}$ which will allow us to deduce \cref{Main theorem 1} from \cref{Main theorem N count} is given in \cref{section k choice}. 

\subsection{Counting the number of representations of an integer in $S_{1}+S_{2}+S_{3}$}
The main congruence obstruction to finding solutions to $M=s_{1}+s_{2}+s_{3}$ comes from the following fact. For all $n\in \Z$ and any base $g$, $g\geq 2$,
\begin{equation}\label{cong fact}
    \sd{n}\equiv n\md{g-1}
\end{equation}
Thus in order for $M=s_{1}+s_{2}+s_{3}$ to have solutions with $s_{i}\in S_{i}$, we must have
\begin{equation}\label{k cond: congruences}
    k_{1}+k_{2}+k_{3}\equiv M \md{g-1}.
\end{equation}
Let $\mu_{K}\coloneq (g-1)K/2$, then we also require that
\begin{equation}\label{k cond close to av}
    |k_{i}-\mu_{K}|\leq C_{g}.
\end{equation}
Note that $\mu_{K}$ is the average base-$g$ digit sum for $n<g^{K}$. By restricting to target digit sums $k_{i}$ that are close to the average value, we ensure that the sets $|S_{i}|$ are large. It is shown in \citep[Lemma~3]{mauduit2005distribution} that $|S_{g}(k)|$ is unimodal in $k$, with maximum size when $k=\floor{\mu_{K}}$. We require an asymptotic bound for the sizes of the sets $S_{i}$. The results of Mauduit and S\'{a}rk\"{o}zy \citep[Theorem~1]{mauduit1997arithmetic}, and Fouvry and Mauduit \citep[Theorem~1.1]{fouvry2005entiers} bound the size of $|S_{g}(k)|$, including for more general ranges of $k$ relative to $\mu_{K}$, however the error terms stated for these results are too large for our purposes; we need something that is $o_{g}(g^{K}K^{-1})$. As we only require a bound for $|S_{i}|$ when $k_{i}$ satisfies \cref{k cond close to av}, we are able to use the local limit theorem stated in \cref{section Psi} to get the following bound in this range.
\begin{cor}\label{S set bound}
For $k_{i}$ satisfying \cref{k cond close to av},
    \[|S_{i}| =\frac{g^{K}}{\sqrt{2\pi \sigma^{2}K}}+O_{g}(g^{K}K^{-3/2}),\]
    where $\sigma^{2}=(g^{2}-1)/12.$
   \end{cor}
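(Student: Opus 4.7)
The plan is to reinterpret $|S_g(k)|$ probabilistically and then invoke the local limit theorem promised in \cref{section Psi}. Each integer $n < g^K$ corresponds uniquely to a $K$-tuple of digits $(d_0,\ldots,d_{K-1}) \in \{0,\ldots,g-1\}^K$, and $s_g(n) = d_0 + \cdots + d_{K-1}$. Hence, letting $X_1,\ldots,X_K$ be i.i.d.\ uniform random variables on $\{0,1,\ldots,g-1\}$,
\[
|S_g(k)| \;=\; g^K \cdot \prob\bigl(X_1 + \cdots + X_K = k\bigr).
\]
Each $X_j$ has mean $(g-1)/2$ and variance $\sigma^2 = (g^2-1)/12$, so $X_1+\cdots+X_K$ has mean $\mu_K$ and variance $\sigma^2 K$.

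The next step is to apply the local limit theorem from \cref{section Psi}, which should provide the Gaussian-accurate approximation
\[
\prob\bigl(X_1 + \cdots + X_K = k\bigr) \;=\; \frac{1}{\sqrt{2\pi\sigma^2 K}}\exp\!\Bigl(-\frac{(k-\mu_K)^2}{2\sigma^2 K}\Bigr) + O_g(K^{-3/2}),
\]
uniformly in $k$. Since the condition $|k-\mu_K| \leq C_g$ forces $(k-\mu_K)^2/(2\sigma^2 K) = O_g(K^{-1})$, a first-order Taylor expansion of the exponential gives
\[
\exp\!\Bigl(-\frac{(k-\mu_K)^2}{2\sigma^2 K}\Bigr) \;=\; 1 + O_g(K^{-1}).
\]
Multiplying through by $g^K/\sqrt{2\pi\sigma^2 K}$ and collecting error terms yields the claimed formula.

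The only real obstacle is ensuring that the local limit theorem cited from \cref{section Psi} really is sharp enough to deliver a pointwise error of $O_g(K^{-3/2})$, as opposed to the weaker $o(K^{-1/2})$ that one gets from a naive CLT. Such strong local limit theorems for sums of bounded lattice-valued i.i.d.\ variables are standard, being obtainable either by Fourier inversion applied to the characteristic function $\varphi(t) = \frac{1}{g}\sum_{d=0}^{g-1} e(dt)$ or via an Edgeworth-type expansion, and the machinery of \cref{section Psi} is set up precisely to furnish this. Once that input is in hand, the corollary is essentially immediate.
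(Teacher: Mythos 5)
Your proposal is correct and follows essentially the same route as the paper: writing $|S_i| = g^K\,\prob(X_1+\cdots+X_K = k_i)$ for i.i.d.\ uniform digits and applying the local limit theorem of \cref{Local limit theorem application version}, then using $|k_i-\mu_K|\leq C_g$ to expand the Gaussian factor as $1+O_g(K^{-1})$ so the deviation is absorbed into the $O_g(g^K K^{-3/2})$ error (the paper works with mean-zero shifted digits, but that is an immaterial difference). The required $O_g(K^{-3/2})$ pointwise error in the local limit theorem is exactly what the paper supplies in \cref{Section: LLT} via the characteristic-function argument you describe.
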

Now we outline the proof of \cref{Main theorem S count}. Let $f_{i}(\theta)$ denote the Fourier transform $\widehat{\ind_{S_{i}}}(\theta)$, for $i=1,2,3$, so
\[f_{i}(\theta)\coloneq \sum_{n\in S_{i}}e(n\theta).\]
Then by orthogonality,
\begin{equation}\label{main S count}
    r_{S_{1}+S_{2}+S_{3}}(M)=\int_{\R/\Z}f_{1}(\theta)f_{2}(\theta)f_{3}(\theta)e(-M\theta)\dif\theta.
\end{equation}
We take the following simple major arcs:
\begin{equation*}
    \mathfrak{M}\coloneq \bigcup_{j=0}^{g-2}\Big [\frac{j}{g-1}-\varepsilon, \frac{j}{g-1}+\varepsilon\Big ],
\end{equation*}
where $\varepsilon\coloneq K^{3/4}g^{-K}/(g-1)$ throughout. Let the minor arcs be the remaining points, $\mathfrak{m}\coloneq (\R/\Z)\setminus \mathfrak{M}$. Intuition for these major arcs can be provided by the results of \cite{de2003counting,mauduit2005distribution,mauduit1997arithmetic}. These works show that the sets $S_{i}$ are well-distributed across the possible residue classes for a given modulus $m$, with the quality of relevant error terms depending on the size of $m$, and whether it is coprime to $g$ and $g-1$. The restriction to certain residue classes comes from \cref{cong fact}, but this is the only congruence restriction. As such, we expect cancellation in $\sum_{n\in S_{i}}e(n\theta)$ unless $\theta$ is very close to a multiple of $1/(g-1)$. At these points, there cannot be cancellation due to the following relationship, which holds for any $\theta \in \R$ and $x\in\Z$, 
\begin{equation}\label{trans inv}
    f_{i}(\theta+x/(g-1))=\ru{k_{i}x}f_{i}(\theta).
\end{equation}
As such, on intervals around multiples of $1/(g-1)$, $|f_{i}(\theta)|$ behaves identically to around $\theta=0$, hence these intervals are included in our major arcs. The relation \cref{trans inv} follows immediately from \cref{cong fact}, as
\[f_{i}\Big(\theta+\frac{x}{g-1}\Big)=\sum_{n\in S_{i}}e\Big (n\Big(\theta+\frac{x}{g-1}\Big) \Big )=\sum_{n\in S_{i}}e(n\theta)\ru{s_{g}(n)x}=\ru{xk_{i}}f_{i}(\theta).\]
 In \cref{Section: major arc} we evaluate the contribution to \cref{main S count} from the major arcs, as stated in the following proposition.
\begin{prop}\label{major arc contrib}
Let $K,M\geq 1$ be integers such that $K$ is sufficiently large and $M\in (g^{K-1},g^{K}]$. Then
    \[\int_{\mathfrak{M}}\prod_{i=1}^{3}f_{i}(\theta)e(-M\theta)=\frac{(g-1)M^{2}}{2(2\pi\sigma^{2}K)^{3/2}}+O_{g}(M^{2}(\log K)^{4}K^{-7/4}),\]
where $\sigma^{2}=(g^{2}-1)/12$.
\end{prop}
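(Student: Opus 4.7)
The plan is to collapse the $g-1$ major arc integrals to one via translation invariance, approximate $\prod_i f_i(\eta)$ uniformly on the resulting short arc using the local limit theorem of \cref{section Psi}, and evaluate the main integral by Fourier inversion.

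For the first step, I would substitute $\theta = j/(g-1) + \eta$ on each arc and apply \eqref{trans inv}. The product transforms as
\[\prod_{i=1}^3 f_i\Bigl(\tfrac{j}{g-1}+\eta\Bigr) e\Bigl(-M\bigl(\tfrac{j}{g-1}+\eta\bigr)\Bigr) = \ru{j(k_1+k_2+k_3-M)} \prod_{i=1}^3 f_i(\eta) e(-M\eta),\]
and by the congruence in \eqref{k cond S} the phase is $1$. Hence
\[\int_{\mathfrak{M}} \prod_i f_i(\theta) e(-M\theta)\,d\theta = (g-1) \int_{-\varepsilon}^{\varepsilon} \prod_i f_i(\eta) e(-M\eta)\,d\eta.\]

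Next I would establish, uniformly for $|\eta| \le \varepsilon$, an asymptotic of the form $f_i(\eta) = |S_i|\,\Phi(\eta) + (\text{error})$, where $\Phi(\eta) = (e(g^K\eta)-1)/(2\pi i g^K\eta)$ is the characteristic function of the uniform distribution on $[0,g^K]$. The heuristic is that elements of $S_i$, conditioned only by their digit sum $k_i \approx \mu_K$ (and automatically lying in the residue class $k_i \pmod{g-1}$, which is invisible near $\eta = 0$), are essentially equidistributed in $[0,g^K)$. Rigorously, I would expand $f_i$ via the digit-sum Fourier identity
\[f_i(\eta) = \int_0^1 e(-k_i\beta) \prod_{j=0}^{K-1}\Bigl(\sum_{d=0}^{g-1} e(d(g^j\eta+\beta))\Bigr)\,d\beta\]
and carry out a saddle-point / Gaussian analysis around $\beta = 0$, invoking the LLT of \cref{section Psi} to control the character sums appearing in the expansion.

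Substituting this approximation, extending the domain from $[-\varepsilon,\varepsilon]$ to $\R$ at the cost of a tail bounded by $\int_{|\eta|>\varepsilon}|\Phi(\eta)|^3\,d\eta = O(g^{-3K}\varepsilon^{-2}) = O(g^{-K}K^{-3/2})$ (negligible after multiplication by $\prod_i|S_i|$), and applying Fourier inversion yields
\[\int_\R \Phi(\eta)^3 e(-M\eta)\,d\eta = \frac{M^2}{2 g^{3K}}\]
for $M \in (0, g^K]$, this being the density of the sum of three independent uniform-on-$[0,g^K]$ random variables at $M$. Multiplying by $(g-1)\prod_i|S_i|$ and using \cref{S set bound} then produces the claimed main term $(g-1)M^2/(2(2\pi\sigma^2 K)^{3/2})$.

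The hard part will be the second step: getting the approximation of $f_i(\eta)$ uniformly across the full range $|\eta|\le\varepsilon$ (rather than merely at $\eta = 0$, which is essentially \cref{S set bound}) with an error small enough that, after cubing and integrating, it produces the allowed $O_g(M^2 (\log K)^4 K^{-7/4})$. The $(\log K)^4$ factor presumably traces back to the auxiliary scale $\ell = O_g(\log K)$ of \eqref{l choice}, perhaps via a low/high digit split or a truncation parameter in the LLT application.
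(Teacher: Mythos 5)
Your outer framework is the same as the paper's: the collapse of the $g-1$ arcs to a single arc via \cref{trans inv} and the congruence in \cref{k cond S}, the extension of the integration range with tail $O(g^{-3K}\varepsilon^{-2})$, and the evaluation of the extended integral as the convolution $h\ast h\ast h(Mg^{-K})=M^{2}g^{-2K}/2$ with $h=\ind_{[0,1]}$ all match the paper, and your error bookkeeping for these steps is correct. The genuine gap is the second step, which is where essentially all of the work in this proposition lies: you need the uniform approximation $f_{i}(\theta)=\frac{g^{K}}{\sqrt{2\pi\sigma^{2}K}}\int_{0}^{1}e(g^{K}\theta x)\dif x+O_{g}\big(g^{K}(\log K)^{4}K^{-3/2}\big)$ for all $|\theta|\leq\varepsilon$ (this is precisely the paper's \cref{major arc asymp lemma}), and your proposal does not prove it. The route you sketch --- the $\beta$-integral detecting the digit sum, a saddle point at $\beta=0$, and \enquote{invoking the LLT to control the character sums} --- is only a gesture: \cref{Local limit theorem application version} is a statement about the probabilities $P(T,\nu)$, not about the oscillatory integral in $\beta$, so it cannot be \enquote{invoked} there without first redoing a stationary-phase analysis that is uniform in $\eta$ over the whole arc and delivers exactly the $(\log K)^{4}K^{-3/2}$ error; none of that is supplied, and you acknowledge as much.

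For comparison, the paper's proof of the missing lemma avoids any saddle point. It writes $f_{i}(\theta)=g^{K}e\big(\tfrac{g^{K}-1}{2}\theta\big)\E_{\xb}e(\xb\theta)\ind_{\sx{\xb}=\xi_{i}}$ as in \cref{lemma expectation version f}; since $|\theta|\leq g^{-K+2\ell^{2}}/(g-1)$, the value of $e(\xb\theta)$ is determined up to $O(g^{-\ell})$ by the top $L=2\ell^{2}+\ell$ digits $\xbd$; conditioning on $\xbd$ and applying \cref{Local limit theorem application version} to the remaining $K-L$ digits replaces the indicator by $P(K-L,\xi_{i}-\sx{\xbd})=(2\pi\sigma^{2}K)^{-1/2}+O_{g}(\ell^{4}K^{-3/2})$ uniformly in $\xbd$ --- the deviation $|\xi_{i}-\sx{\xbd}|\ll_{g}\ell^{2}$, squared inside the Gaussian, is exactly the source of the $(\log K)^{4}$ you guessed at; finally $\E_{\xbd}e(\xbd\theta)$ is an average over $g^{L}$ equally spaced points, which equals $\int_{0}^{1}e(g^{K}\theta x)\dif x$ up to a phase $1+O(g^{-\ell})$. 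Until you either reproduce this decoupling argument or carry out your $\beta$-integral analysis in full with the required uniformity and error, the proposal frames the proposition correctly but leaves its heart unproved.
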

We show in \cref{Section: minor arc} that $f_{i}(\theta)$ is sufficiently small at $\theta\in \mathfrak{m}$ so that the contribution to \cref{main S count} from the minor arcs is subsumed into the error term of \cref{Main theorem S count}. Here is a precise statement.
\begin{prop}\label{minor arc bound} For all $\theta \in \mathfrak{m}$,
   \[f_{i}(\theta) \ll_{g} g^{K}K^{-5/4}.\] 
\end{prop}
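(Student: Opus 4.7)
The strategy is to express $f_{i}(\theta)$ as a Fourier integral against the full digit-sum generating function, and then exploit the minor-arc condition on $\theta$ via a block decomposition.

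First I would apply orthogonality in the digit-sum variable. Using $\ind_{\sd{n}=k_{i}} = \int_{0}^{1} e((\sd{n}-k_{i})\alpha)\dif\alpha$, one obtains
\begin{equation*}
    f_{i}(\theta) = \int_{0}^{1} e(-k_{i}\alpha)\,\Psi(\theta,\alpha)\dif\alpha, \qquad \Psi(\theta,\alpha) \coloneq \prod_{j=0}^{K-1}\sum_{d=0}^{g-1}e(d(g^{j}\theta+\alpha)),
\end{equation*}
which is essentially the content of \cref{decoupling lemma}. By the translation invariance \cref{trans inv} applied to $\theta$, I may further reduce to the range $\varepsilon \leq |\theta| \leq 1/(2(g-1))$, so that $\theta$ is genuinely far from every special rational $j/(g-1)$.

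Next I would partition the digit indices $\{0,1,\ldots,K-1\}$ into consecutive blocks of length $\ell = \lceil 384g^{3}\log K\rceil$ and factor $\Psi(\theta,\alpha) = \prod_{b}\Psi_{b}(\theta,\alpha)$. Using the block cancellation results \cref{pre block cancellation} and \cref{post cancellation}, one shows that blocks in which the orbit segment $\{g^{j}\theta\}_{j\in b}$ is nontrivial modulo $\Z$ contribute a factor strictly smaller than $g^{\ell}$, uniformly in $\alpha$. Since $\theta \in \mathfrak{m}$ forces $\V{\theta - j/(g-1)} \geq \varepsilon = K^{3/4}g^{-K}/(g-1)$ for every $j$, enough blocks should qualify to yield a useful pointwise bound on $|\Psi(\theta,\alpha)|$. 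Feeding this multiplicative bound into \cref{Local limit theorem application version} for the residual $\alpha$-integral should then produce the stated estimate.

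The main obstacle is the precise quantitative book-keeping. The trivial bound from \cref{S set bound} gives only $|f_{i}(\theta)| \leq |S_{i}| \ll g^{K}K^{-1/2}$, so the target bound requires extracting an additional $K^{-3/4}$ savings, exactly matching the major-arc radius $\varepsilon$. The delicate case is $\theta$ sitting just outside a major arc, where $g^{j}\theta$ is essentially zero modulo $1$ for most $j$ and the extra cancellation has to be concentrated in the top $O(\log K)$ indices alone. Calibrating the block length $\ell = \Theta(\log K)$ against the number of blocks $K/\ell$, so that the aggregate block savings combine with the Gaussian factor from the local limit theorem to give exactly $g^{K}K^{-5/4}$ uniformly in $\alpha$, is the technical core of the argument.
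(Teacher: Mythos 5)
There is a genuine gap, and it sits exactly at the points that make \cref{minor arc bound} hard. Your plan bounds $f_i(\theta)$ by controlling $|\Psi(\theta,\alpha)|=\prod_{j=0}^{K-1}|U(g^{j}\theta+\alpha)|$ (with $U(t)=\sum_{d=0}^{g-1}e(dt)$) blockwise, \enquote{uniformly in $\alpha$}, using the fact that $\theta$ is $\varepsilon$-far from every $j/(g-1)$. But the distance of $\theta$ from the rationals $j/(g-1)$ does not force the orbit $(g-1)g^{j}\theta$ to be far from integers for many $j$, and a block whose factors $g^{j}\theta+\alpha$ are all essentially equal to a single value $c/(g-1)+\alpha$ contributes the full $g^{\ell}$ at the single choice $\alpha\equiv -c/(g-1)$, so no per-block saving uniform in $\alpha$ exists. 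Concretely, take $\theta=a/(g(g-1))$ with $1\leq a\leq g/2$, i.e.\ $(g-1)\theta=ag^{-1}$ has a single non-zero centred digit: this $\theta$ is a fixed distance from every $j/(g-1)$, hence deep in $\mathfrak{m}$, yet $\Vert (g-1)g^{j}\theta\Vertr=0$ for all $j\geq 1$, so the Fouvry--Mauduit-type pairing (\cref{FM bound 2}), which is the only way to remove $\alpha$ from such products, gives no saving at all. Worse, even the sharpest treatment that keeps the $\alpha$-dependence but only uses $|\Psi(\theta,\alpha)|$ cannot succeed: at this $\theta$ the product has a peak at $\alpha\approx -a/(g-1)$ of width $\asymp K^{-1/2}$, damped only by the single factor $|U(-a/g+\beta)|\asymp_{g}|\beta|$, so $\int_{0}^{1}|\Psi(\theta,\alpha)|\,\dif\alpha\asymp_{g}g^{K}K^{-1}$. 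Since the Parseval argument deducing \cref{Main theorem S count} needs $\sup_{\mathfrak m}|f_i|=o_{g}(g^{K}K^{-1})$, any proof that discards the phase $e(-k_i\alpha)$ (as yours does after the orthogonality step) cannot reach the stated $g^{K}K^{-5/4}$.

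The paper's route is structured around precisely this obstruction. It splits $\mathfrak m$ by the number of non-zero centred digits of $(g-1)\theta$: when $w_{K}((g-1)\theta)>\ell$, \cref{FM bound 2} together with \cref{lemma: frac part to few non-zero} gives $g^{K}K^{-12}$ (\cref{FM very good g}); when $(g-1)\theta$ is extremely close to an integer but $\theta\notin\mathfrak M$, the major-arc asymptotic \cref{major arc asymp lemma} gives $g^{K}K^{-5/4}$ (\cref{close to zero minor arc}); and in the remaining, genuinely hard case ($w_{K}((g-1)\theta)\leq\ell$ and $\V{(g-1)\theta}\geq g^{-K+2\ell^{2}}$, which includes the example above) it proves \cref{minor arc few non zero} by decoupling the few digits of $X$ adjacent to the non-zero digits of $(g-1)\theta$ from the rest (\cref{decoupling lemma}), using the local limit theorem through \cref{psi lemma} and \cref{psi 0 bound}, and extracting \emph{signed} cancellation either from a vanishing average over $\xbd$ (\cref{post cancellation}, \cref{pre block cancellation}) or from root-of-unity cancellation in $\Psi(\mathbf a;0)$. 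That signed cancellation, which exploits the fixed digit-sum constraint rather than absolute values of the digit generating function, is the missing ingredient in your proposal; also note that the orthogonality identity you invoke is the starting point of \cref{FM bound 2} (\cref{section FM bound}), not the content of \cref{decoupling lemma}.
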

Via Parseval's identity and the bound for $|S_{i}|$ given by \cref{S set bound}, this is sufficient to prove that the minor arcs contribute only to the error term in \cref{Main theorem S count}. In this work, we use a bound on $|f_{i}(\theta)|$ due to Fouvry and Mauduit, \cite{fouvry2005entiers}, however we remark that one can find other bounds on exponential sums over sets of integers with fixed digit sums in \cite{de2003counting, mauduit2017digits, shparlinski2024weyl}.

We apply the aforementioned result of Fouvry and Mauduit \cite{fouvry2005entiers} to bound $f_{i}(\theta)$ whenever $\theta$ is such that $(g-1)\theta$ has many non-zero digits in its \textit{centred} base-$g$ expansion. We define the notion of the centred base-$g$ expansion in \cref{section centred exp}; this expansion shifts the range of digits to be centred around zero. This leaves the task of bounding $f_{i}(\theta)$ for $\theta$ bounded away from translates of $1/(g-1)$, and with the specific form  
 \[\theta=\frac{1}{g-1}\Big (\frac{\varepsilon_{m_{1}}}{g^{m_{1}}}+ \ldots + \frac{\varepsilon_{m_{t}}}{g^{m_{t}}}+\eta\Big ),\]
where $m_{1}<\ldots <m_{t}\leq K$, $t\leq \ell$, $\varepsilon_{m_{i}}\in (-\tfrac{g}{2},\tfrac{g}{2}]\cap \Z$ and $|\eta|<g^{K}$. We use the fact that $t\leq \ell$ to approximate the value of $e(n\theta)$ for $n\in S_{i}$. It turns out that very little information about $n$ is actually needed for this task. Obviously the digits of $n$ must sum to $k_{i}$, as $n\in S_i$. We show that besides this, we also need to know the value of a very small number of digits of $n$, and the residue class modulo $g-1$ of the sum of a fixed subset of digits. 

By modelling the digits of $n$ as independent copies of a uniform random variable, we replace the condition that $n\in S_{i}$ by the probability that the random variables modelling the digits sum to $k_{i}$. We can estimate this term using the local limit theorem. This application of the local limit theorem uses that the target digit sums, $k_{i}$, are within a constant of the actual average digit sum $\mu_{K}=(g-1)K/2$, so that the bounds produced for $f_{1}(\theta),f_{2}(\theta)$ and $f_{3}(\theta)$ are identical.

Given \cref{major arc contrib} and \cref{minor arc bound}, we can deduce \cref{Main theorem S count}.
\begin{proof}[Proof of \cref{Main theorem S count}]
From \cref{main S count},
\begin{equation}
    r_{S_{1}+S_{2}+S_{3}}(M)=\int_{\R/\Z}\prod_{i=1}^{3}f_{i}(\theta)e(-M\theta)\dif\theta.
\end{equation}
\cref{major arc contrib} gives that the contribution to the above integral from the major arcs provides the main term in \cref{Main theorem S count}. We show that the contribution from the minor arcs $\mathfrak{m}$ is $O_{g}(g^{2K}K^{-7/4})$, and thus these points only contribute to the error term in \cref{Main theorem S count}. We have
\[\Big |\int_{\mathfrak{m}}\prod_{i=1}^{3}f_{i}(\theta)e(-M\theta)\Big |\leq \sup_{\theta\in \mathfrak{m}}|f_{1}(\theta)|\int_{\R/\Z}|f_{2}(\theta)||f_{3}(\theta)|\dif \theta.\]
By Cauchy-Schwarz and Parseval, this is
\[\leq |S_{2}|^{1/2}|S_{3}|^{1/2}\sup_{\theta\in \mathfrak{m}}|f_{1}(\theta)|.\]
\cref{S set bound} gives $|S_{i}|\ll_{g} g^{K}K^{-1/2}$ for $i=1,2$, and combining this with \cref{minor arc bound} gives
\[\Big |\int_{\mathfrak{m}}\prod_{i=1}^{3}f_{i}(\theta)e(-M\theta)\Big |\ll_{g} g^{2K}K^{-7/4}.\qedhere\]  
\end{proof}

\subsection{Restricting to Niven numbers with fixed digit sums}

To deduce \cref{Main theorem N count} from \cref{Main theorem S count}, we show that for \enquote{good} choices of $k_{1},k_{2},k_{3}$, the number of representations of $M$ as $s_{1}+s_{2}+s_{3}$ with $k_{i}\mid s_{i}$ for each $i$ is roughly a $(k_{1}k_{2}k_{3})^{-1}$ proportion of the total number of representations. Note that the conditions for the $k_{i}$ stated in the introduction give such a choice. 

It is shown by De Koninck, Doyon and K\'{a}tai in \cite{de2003counting} and independently, Mauduit, Pomerance and S\'{a}rk\"{o}zy in \citep[Theorem~C]{mauduit2005distribution}, that
 \begin{equation}\label{size N}
 \mathcal{N}_{i}\sim \frac{(k_{i},g-1)}{k_{i}}S_{i}.\end{equation}
The $(k_{i},g-1)$ term arises from the congruence relation between $n$ and $s_{g}(n)$ given in \cref{cong fact}. We show that indeed the expected proportion of representations are of sums of three Niven numbers, by showing that for appropriate $k_{1}
,k_{2}$ and $k_{3}$,
\[r_{\mathcal{N}_{1}+\mathcal{N}_{2}+\mathcal{N}_{3}}(M)=r_{S_{1}+S_{2}+S_{3}}(M)\prod_{i=1}^{3}\frac{(k_{i},g-1)}{k_{i}}(1+o_{g}(1)).\]
To show this, we exploit the fact that \cref{Main theorem S count} counts the number of ways to write $M$ as the sum of three integers with \textit{fixed} near-average digit sums. Let $g_{i}(\theta)$ be the Fourier transform of $\mathcal{N}_{i}$,
\[g_{i}(\theta)\coloneq \sum_{n\in \mathcal{N}_{i}}e(n\theta).\]
By considering fixed target digit sums $k_{i}$, we gain in that we can relate $g_{i}(\theta)$ to $f_{i}(\theta)$, detecting the condition that $k_{i}\mid n$ by orthogonality. This reduces the problem to one of understanding the Fourier transform $f_{i}(\theta)$ along translates of frequencies by multiples of $1/k_{i}$. 

 We show in \cref{Section: minor arc} that a strong bound for $f_{i}(\theta)$ is available whenever $(g-1)\theta$ has many digits in its centred base-$g$ expansion. As such, a \enquote{good} choice of $k_{1},k_{2},k_{3}$ requires that their reciprocals, and certain multiples thereof, have many non-zero digits in base $g$. This ensures that $g_{1}(\theta)$, $g_{2}(\theta)$ and $g_{3}(\theta)$ are only simultaneously large for $\theta \in \mathfrak{M}$, at which point we can use the results of \cref{Section: minor arc} to conclude the proof of \cref{Main theorem N count}.
\subsection{Probabilistic model for digits}\label{section prob model} 
Throughout, we switch to a probabilistic model, viewing the digits of $n$ as random variables to model the condition that $\sd{n}=k_{i}$ by a local limit theorem. To be precise, let us state some notation. Let $Y$ be a random variable uniformly taking values in $\{0,\ldots, g-1\}-(g-1)/2$. Throughout, $\sigma^{2}$ will denote the variance of $Y$, $\sigma^{2}=(g^{2}-1)/12$. The translation by $-(g-1)/2$ is to ensure that $Y$ is mean-zero; we will account for this shift where appropriate, and thus note that when $g$ is even, $Y$ is not integer valued. Let $X_{0},\ldots, X_{K-1}$ be independent and identically distributed copies of $Y$, and let
\[\xb\coloneq \sum_{j=0}^{K-1}X_{j}g^{j}.\]
Then $\xb+(g^{K}-1)/2$ is a uniform random integer supported on $\{0,\ldots, g^{K}-1\}$, with $j$\sth digit $X_{j}+(g-1)/2$. This follows by the uniqueness of base-$g$ expansions, as 
\[\xb+\frac{g^{K}-1}{2}=\sum_{j=0}^{K-1}\Big (X_{j}+\frac{g-1}{2}\Big )g^{j} \textrm{ and }X_{j}+\frac{g-1}{2}\in \{0,\ldots, \frac{g-1}{2}\}.\]
Moreover, this means that $\sd{\xb+(g^{K}-1)/2}=\sum_{j=0}^{K-1}X_{j}+\mu_{K}.$ For convenience, we define the following mean-zero digit sum function for $\xb$. For $\xb=\sum_{j=0}^{K-1}X_{j}g^{j}$, let $\sx{\xb}\coloneq \sum_{j=0}^{K-1}X_{j}$. Equivalently, $\sx{\xb}=\sd{\xb+(g^{K}-1)/2}-\mu_{K}$. Let
\begin{equation}\label{xi defn}
    \xi_{i}\coloneq k_{i}-\mu_{K}.
\end{equation}
Now we may replace the sum over $n\in S_{i}$ by an average over $\xb$,  
\begin{equation}\label{lemma expectation version f}
    f_{i}(\theta)=g^{K}e\Big (\frac{g^{K}-1}{2}\theta\Big )\E_{\xb}e(\xb\theta)\ind_{\sx{\xb}=\xi_{i}}.
\end{equation}
Recasting $f_{i}(\theta)$ as an expectation is not formally needed, but this interpretation as an average is convenient for subsequent sections.
\subsection{Centred base-$g$ expansion}\label{section centred exp}
We also require the notion of a \textit{centred} base-$g$ expansion of a real number, as used by Green in \cite{green2025waring}, where more detail regarding such expansions can be found. The centred base-$g$ expansion is closely linked to the regular base-$g$ expansion, but shifts the range of permissible digits. Let
\[\mathcal{R}_{g}\coloneq \{-\tfrac{g-1}{2},\ldots, \tfrac{g-1}{2}\}\textrm{ for odd }g \textrm{ and }\mathcal{R}_{g}\coloneq\{-\tfrac{g-2}{2},\ldots, \tfrac{g}{2}\} \textrm{ for even }g,\]
and let
\begin{equation}\label{Ig defn}
    I_{g}\coloneq (-1/2,1/2]\textrm{ for }g \textrm{ odd, and } I_{g}\coloneq\Big (-\frac{g-2}{2(g-1)},\frac{g}{2(g-1)}\Big ]\textrm{ for }g\textrm{ even.}
\end{equation}
Then for $\alpha \in I_{g}$, if 
\begin{equation}\label{centred exp defn}
    \alpha=\sum_{i\geq 1}\alpha_{i}g^{-i}, \textrm{ with }\alpha_{i}\in \mathcal{R}_{g}\textrm{ for all }i,
\end{equation}
we call this the centred base-$g$ expansion of $\alpha$. Note that $I_{g}$ is the interval for which the centred base-$g$ expansion of any element has no integer part, as opposed to $[0,1)$ for the regular base-$g$ expansion. Let $\mathcal{R}_{g}^{+}=\max \mathcal{R}_{g}$ and $\mathcal{R}_{g}^{-}=\min \mathcal{R}_{g}$.
As in the regular expansion, the centred expansion of a real number is unique, except when it ends in an infinite sequence of digits all equal to $\mathcal{R}^{+}$, or all $\mathcal{R}^{-}$. In this case, we would choose the latter representation. The reason for using this alternate notion of expansion is to use the following function, as defined in \cite{green2025waring}. \begin{defn}\label{wk defn}
    Let $w_{K}(\alpha)$ be the function counting the number of non-zero digits within the first $K$ digits of the centred base-$g$ expansion of $\alpha$, after the radix point. For $\alpha$ with expansion given in \cref{centred exp defn}, 
    \[w_{K}(\alpha)=\sum_{i=1}^{K}\ind_{\alpha_{i}\neq 0}.\]
\end{defn}
\section{Consequences of the local limit theorem}\label{section Psi}
In this section, we state a required local limit theorem and use this along with the probabilistic digit model outlined in \cref{section prob model} to prove results required for \cref{Section: major arc} and \cref{Section: minor arc}.

The local limit theorem we use is a special case of a more general local limit theorem, such as Theorem 13 of \cite[Ch. VII]{petrov1972independent}. Let $Y$ be a random variable uniformly taking values in $\{0,\ldots, g-1\}-(g-1)/2$, and let $T\geq 0$ be an integer. For $\nu\in \{0,\ldots, T(g-1)\}-\mu_{T}$, let $P(T,\nu)$ denote the probability that $T$ i.i.d. copies of $Y$ sum to $\nu$. 
\begin{cor}\label{Local limit theorem application version}
    For $T,\nu$ and $P(T,\nu)$ as defined above,
    \[P(T,\nu)=\frac{e^{-x^{2}/2}}{\sqrt{2\pi\sigma^{2}T}}+O_{g}(T^{-3/2}),\]
    where $x=\nu/\sqrt{\sigma^{2}T}$, and $\sigma^{2}=(g^{2}-1)/12$ is the variance of $X$. In particular, if $|x|<1/2$,
    \[P(T,\nu)=\frac{1}{\sqrt{2\pi\sigma^{2}T}}+O_{g}(\max (x^{2}T^{-1/2},T^{-3/2})).\]
\end{cor}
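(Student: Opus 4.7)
The plan is to deduce the corollary directly from the general local limit theorem for lattice distributions (Theorem 13 in Chapter VII of \cite{petrov1972independent}). First I would verify the hypotheses for $Y$: it has mean zero by construction, variance $\sigma^{2}=(g^{2}-1)/12$ by the standard calculation for a centred discrete uniform, and bounded support (so all higher moments exist and are bounded in terms of $g$); moreover $Y$ takes values on a lattice of span $1$, on the coset $-(g-1)/2+\Z$. Applying Petrov's theorem to $S_{T}=Y_{1}+\cdots+Y_{T}$, a sum of $T$ i.i.d. copies of $Y$, yields
\[ P(T,\nu)=P(S_{T}=\nu)=\frac{1}{\sqrt{2\pi\sigma^{2}T}}\,e^{-x^{2}/2}+O_{g}(T^{-3/2}), \]
uniformly for $\nu$ in the support of $S_{T}$ and with $x=\nu/\sqrt{\sigma^{2}T}$. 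The implicit constant depends only on the moments of $Y$, hence only on $g$. This establishes the first displayed formula.

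For the refinement when $|x|<1/2$, I would Taylor-expand the Gaussian factor. On this range $e^{-x^{2}/2}=1-x^{2}/2+O(x^{4})$, and since $|x|<1/2$ the quartic term is dominated by $x^{2}$, so $e^{-x^{2}/2}=1+O(x^{2})$. Substituting into the first formula gives
\[ P(T,\nu)=\frac{1}{\sqrt{2\pi\sigma^{2}T}}+O\!\left(\frac{x^{2}}{\sqrt{T}}\right)+O_{g}(T^{-3/2})=\frac{1}{\sqrt{2\pi\sigma^{2}T}}+O_{g}\bigl(\max(x^{2}T^{-1/2},\,T^{-3/2})\bigr), \]
as claimed.

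There is no genuine obstacle here; the only point requiring care is selecting a version of the local limit theorem whose error term is of the correct strength $O(T^{-3/2})$ uniformly in $\nu$, rather than a weaker $o(T^{-1/2})$ uniform bound that one sometimes sees. Since $Y$ is bounded, the hypotheses for the sharper form of Petrov's theorem are satisfied for free, and the $g$-dependence of the constants is automatic.
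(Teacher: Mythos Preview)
Your proposal is correct and matches the paper's own proof essentially verbatim: the paper cites Theorem~13 of \cite[Ch.~VII]{petrov1972independent} for the first display (giving a self-contained derivation in an appendix as an alternative), and obtains the second display by expanding $e^{-x^{2}/2}=1+O(x^{2})$ for $|x|<1/2$. The one point worth making explicit is that the $O_{g}(T^{-3/2})$ error (rather than merely $O_{g}(T^{-1})$) comes from the vanishing of the third cumulant of $Y$, which holds because $Y$ is symmetric about $0$; the paper itself flags this in the appendix, noting that one must compute the cumulants of $Y$ to extract the stated error from Petrov's expansion.
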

\begin{proof}
    The first statement is a corollary of Theorem 13 of \citep[Ch. VII]{petrov1972independent}, and we give a self-contained proof of this in \cref{Section: LLT}. The second statement follows immediately from expanding the exponential term.
\end{proof}
The second part of \cref{Local limit theorem application version} gives the bound on the sets $|S_{i}|$ claimed in \cref{S set bound}. Recall that $\mu_{K}=(g-1)K/2$ is the average digit sum for an integer $n\in [0,g^{K})$.
\begin{proof}[Proof of \cref{S set bound}]
    Note that $|S_{i}|=g^{K}P(K,k_{i}-\mu_{K})$, therefore this is an immediate consequence of the second part of \cref{Local limit theorem application version}, using \cref{k cond close to av} to show $x\coloneq (k_{i}-\mu_{K})/\sqrt{\sigma^{2}K}$ satisfies $|x|<1/2$ for sufficiently large $K$. 
\end{proof}
We now state some technical lemmas required for \cref{Section: minor arc}. These concern a function which we define below, which generalises the probability $P(K,\nu)=\prob(X_{0}+\ldots+X_{K-1}=\nu )$ to include powers of the $(g-1)$\sth roots of unity weighted by subsets of the random variables $X_{i}$. Here, the random variables $X_{i}$ are i.i.d. copies of the random variable $Y$, as defined in \cref{section prob model}.. For the rest of this section we assume that $g\geq 3$. 

Let $\mathbf{a}=(a_{0},\ldots, a_{g-2})$ be a $(g-1)$-tuple of non-negative integers. For any $\nu\in \{0,\ldots, (g-1)K\}-\mu_{K}$, the function $\Psi(\mathbf{a};\nu)$ is defined to be:
\begin{equation}\label{psi defn}
    \Psi(\mathbf{a};\nu)\coloneq \sum_{\substack{j_{1},\ldots, j_{g-2}\\ \sum_{i=0}^{g-2} j_{i}=\nu}}\prod_{s=0}^{g-2}\ru{sj_{s}}P(a_{s},j_{s}).
\end{equation}
where the sum ranges over all tuples $(j_{1},\ldots, j_{g-2})$ such that $P(a_{0},\nu-\sum_{r=1}^{g-2}j_{r})\prod_{s=1}^{g-2}P(a_{s},j_{s})>0$. Note that $P(a_{s},j_{s})>0$ for $j_{s}\in \{0,\ldots, a_{s}(g-1)\}-a_{s}(g-1)/2$, and $P(a_{s},j_{s})=0$ otherwise, so certainly we have the bound
\begin{equation}\label{j range remark}
|j_{s}|\leq a_{s}(g-1)/2 \textrm{ for }s=1,\ldots, g-2.
\end{equation}
Recall from \cref{section prob model} that $X_{0},\ldots, X_{K-1}$ are i.i.d. copies of $Y$, and that $\xb\coloneq \sum_{i=0}^{K-1}X_{i}g^{i}$. Then we have the relation
\begin{equation}\label{ru sum xb equiv}\ru{\sx{\xb}}=\ru{\xb}\end{equation}
 as $\xb=\sum_{j=0}^{K-1}X_{j}g^{j}\equiv \sum_{j=0}^{K-1}X_{j}\md{g-1}$. We use this fact to state the following equivalent expression for $\Psi(\mathbf{a};\nu)$. This is the form in which the function actually arises in calculations in \cref{Section: minor arc}, however the form stated in \cref{psi defn} is more convenient for the results in this section.
\begin{lemma}\label{Psi expectation form} Let $\mathbf{a}=(a_{0},\ldots, a_{g-2})$ for integers $a_{i}\geq 0$. For $j\in \{0,\ldots, g-2\}$ and $0\leq i\leq a_{j}-1$, let $Y_{j}\coloneq \sum_{i=0}^{a_{j}-1}Y_{j,i}g^{i}$, where the $Y_{j,i}$ are i.i.d. copies of $Y$. Then
 \[\Psi(\mathbf{a};\nu)=\E_{Y_{0},\ldots, Y_{g-2}}\ru{Y_{1}+\ldots+(g-2)Y_{g-2}}\ind_{ \sum_{j=0}^{g-2}\sx{Y_{j}}=\nu}.   \]
\end{lemma}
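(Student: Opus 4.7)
My plan is to show that the two expressions agree by computing the expectation directly using independence and the identity \cref{ru sum xb equiv}.

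First, I would observe that for each $j$, the random variable $\sx{Y_j} = \sum_{i=0}^{a_j-1} Y_{j,i}$ is a sum of $a_j$ i.i.d. copies of $Y$, so by the very definition of $P(\cdot,\cdot)$,
\[
\prob\bigl(\sx{Y_j} = j_s\bigr) = P(a_j, j_s),
\]
and these probabilities are supported on $j_s \in \{0,\ldots,a_j(g-1)\} - a_j(g-1)/2$. Moreover, the families $\{Y_{j,i}\}_i$ for different $j$ are independent, so the random variables $\sx{Y_0},\ldots,\sx{Y_{g-2}}$ are jointly independent.

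Next, I would apply \cref{ru sum xb equiv} to each $Y_j$: since $Y_j = \sum_i Y_{j,i}\, g^i$ and $g \equiv 1 \md{g-1}$, we have $Y_j \equiv \sx{Y_j} \md{g-1}$, and therefore
\[
\ru{s Y_j} = e\Bigl(\tfrac{s Y_j}{g-1}\Bigr) = e\Bigl(\tfrac{s\, \sx{Y_j}}{g-1}\Bigr) = \ru{s\, \sx{Y_j}}
\]
for every integer $s$. Hence the integrand in the claimed expectation depends only on the tuple $(\sx{Y_0},\ldots,\sx{Y_{g-2}})$, and (using $\ru{0 \cdot \sx{Y_0}} = 1$)
\[
\ru{Y_1 + 2Y_2 + \ldots + (g-2)Y_{g-2}} \ind_{\sum_j \sx{Y_j} = \nu} = \prod_{s=0}^{g-2} \ru{s\, \sx{Y_s}} \cdot \ind_{\sum_j \sx{Y_j} = \nu}.
\]

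Finally, I would take expectations, conditioning on the values of the $\sx{Y_s}$. By independence, the joint distribution factors, and summing over all admissible tuples $(j_0,\ldots,j_{g-2})$ with $\sum_s j_s = \nu$ yields
\[
\E \prod_{s=0}^{g-2} \ru{s\, \sx{Y_s}} \ind_{\sum_j \sx{Y_j} = \nu}
= \sum_{\substack{j_0,\ldots,j_{g-2}\\ \sum_s j_s = \nu}} \prod_{s=0}^{g-2} \ru{s j_s}\, P(a_s, j_s),
\]
where the support condition $P(a_s,j_s)>0$ cuts the sum down to precisely the range in \cref{psi defn}. This is the definition of $\Psi(\mathbf{a};\nu)$, completing the proof.

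There is no real obstacle here: the only mild care needed is to verify that $\ru{s Y_j} = \ru{s\,\sx{Y_j}}$ (so that the integrand genuinely collapses to a function of the digit sums) and to note that $j_0$ is determined by $\nu - \sum_{s\geq 1} j_s$, matching the fact that the sum in \cref{psi defn} is indexed by $j_1,\ldots,j_{g-2}$ alone.
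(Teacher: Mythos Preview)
Your proposal is correct and follows essentially the same approach as the paper: both arguments use \cref{ru sum xb equiv} to replace $\ru{sY_j}$ by $\ru{s\,\sx{Y_j}}$, then use independence of the $\sx{Y_j}$ to factor the expectation into a sum of products of the probabilities $P(a_s,j_s)$. The only cosmetic difference is that the paper conditions on $\sx{Y_1},\ldots,\sx{Y_{g-2}}$ first and then takes the remaining expectation over $Y_0$, whereas you condition on all $\sx{Y_s}$ simultaneously; the content is identical.
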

\begin{proof}
From \cref{ru sum xb equiv}, we have
    \begin{align}\label{psi equiv form 1}
        \E_{Y_{0},\ldots, Y_{g-2}}&\ru{Y_{1}+\ldots+(g-2)Y_{g-2}}\ind_{ \sum_{j=0}^{g-2}\sx{Y_{j}}=\nu}\nonumber\\&= \E_{Y_{0},\ldots, Y_{g-2}}\ru{\sx{Y_{1}}+\ldots+(g-2)\sx{Y_{g-2}}}\ind_{ \sum_{j=0}^{g-2}\sx{Y_{j}}=\nu}. 
    \end{align}
    Let $t_{j}$ denote the possible values of $\sx{Y_{j}}$ for $j=1,\ldots, g-2$. Then \cref{psi equiv form 1} equals
    \begin{align*}
        \E_{Y_{0}}\sum_{t_{1},\ldots,t_{g-2}}\prod_{j=1}^{g-2}\ru{jt_{j}}&\prob(\sx{Y_{j}}=t_{j})\ind_{\sx{Y_{0}}=\nu-\sum_{j=1}^{g-2}\sx{Y_{j}}}\\
        &=\sum_{t_{1},\ldots,t_{g-2}}P(a_{0},\nu-\sum_{j=1}^{g-2} t_{j})\prod_{j=1}^{g-2}\ru{jt_{j}}P(a_{j},t_{j})=\Psi(\mathbf{a};\nu).\qedhere
    \end{align*}
\end{proof}

Note that if $\mathbf{a}=(t,0,\ldots ,0)$, then $\Psi(\mathbf{a};\nu)=P(t,\nu)$ which can be estimated by \cref{Local limit theorem application version}. The following lemma generalises the second part of \cref{Local limit theorem application version}: it says that $\Psi(\mathbf{a};\nu)$ is approximately constant as $\nu$ varies, provided that $\nu$ is sufficiently small.
\begin{lemma}\label{psi lemma}
    Suppose that $a_{0},\ldots, a_{g-2}\geq 0$ be integers with $\mathbf{a}=(a_{0},\ldots, a_{g-2})$ and let $\nu\in \Z$. Suppose further that $|\nu|\leq Ca_{0}^{1/4}$ for some $C> 0$ and $a_{0}\geq a_{s}$ for $s=1,\ldots, g-2$. Then 
    \[\Psi(\mathbf{a};\nu)=\Psi(\mathbf{a};0)+O_{C,g}(\nu^{2}a_{0}^{-3/2}).\]
\end{lemma}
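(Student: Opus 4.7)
The plan is to apply \cref{Local limit theorem application version} to replace $P(a_{0},\cdot)$ by a Gaussian density, then Taylor-expand in $\nu$. Let $G(m):=(2\pi\sigma^{2}a_{0})^{-1/2}\exp(-m^{2}/(2\sigma^{2}a_{0}))$. By \cref{Local limit theorem application version}, $P(a_{0}, m) = G(m) + O_{g}(a_{0}^{-3/2})$ uniformly in $m$. Substituting into the definition of $\Psi(\mathbf{a};\nu)$ and using $\sum_{j_{s}}P(a_{s},j_{s})=1$ for each $s\ge 1$, we obtain
\[\Psi(\mathbf{a};\nu)=\E\chi\,G(\nu-U)+O_{g}(a_{0}^{-3/2}),\]
with $\chi$ and $U$ as in \cref{Psi expectation form}. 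The $O_{g}(a_{0}^{-3/2})$ error is within the target $O_{g}(\nu^{2}a_{0}^{-3/2})$ whenever $|\nu|\geq 1$, while $\nu=0$ is trivial; the task therefore reduces to bounding $\E\chi[G(\nu-U)-G(-U)]$.

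For each fixed $U$, Taylor's theorem gives $G(\nu-U)-G(-U)=\nu G'(-U)+(\nu^{2}/2)G''(\xi_{\nu,U}-U)$ for some $\xi_{\nu,U}\in[0,\nu]$. A direct computation shows $\sup_{m}|G''(m)|=O_{g}(a_{0}^{-3/2})$, so the quadratic remainder contributes $O_{g}(\nu^{2}a_{0}^{-3/2})$ after taking expectation. The linear part equals $(\nu/\sigma^{2}a_{0})\E\chi UG(U)$, using $G'(-U)=(U/\sigma^{2}a_{0})G(U)$. Since $|\nu|\leq\nu^{2}$ for nonzero integer $\nu$, the lemma reduces to the key bound
\[|\E\chi UG(U)|=O_{g}(a_{0}^{-1/2}).\]

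To establish this I use Fourier inversion. Independence of $Y_{1},\ldots,Y_{g-2}$ yields $\E\chi\,e(\theta U)=\prod_{s=1}^{g-2}\phi(\theta+s/(g-1))^{a_{s}}=:\Lambda(\theta)$, where $\phi(\theta)=\E e(Y\theta)=\sin(g\pi\theta)/(g\sin(\pi\theta))$. Since $UG(U)$ has Fourier transform $-2\pi i\sigma^{2}a_{0}\theta\,e^{-2\pi^{2}\sigma^{2}a_{0}\theta^{2}}$,
\[\E\chi UG(U)=-2\pi i\sigma^{2}a_{0}\int_{\R}\theta\,e^{-2\pi^{2}\sigma^{2}a_{0}\theta^{2}}\,\Lambda(\theta)\,d\theta.\]
The weight $\theta e^{-2\pi^{2}\sigma^{2}a_{0}\theta^{2}}$ is odd and concentrates on $|\theta|\lesssim a_{0}^{-1/2}$, so only the odd part of $\Lambda$ about $\theta=0$ contributes, with the range $|\theta|\gtrsim a_{0}^{-1/2}\log a_{0}$ giving a super-polynomially small error. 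Taylor-expanding $\Lambda(\theta)=\Lambda(0)+\theta\Lambda'(0)+O(\theta^{2}\sup|\Lambda''|)$ leaves
\[\E\chi UG(U)=O_{g}(a_{0}^{-1/2}|\Lambda'(0)|)+O_{g}(a_{0}^{-3/2}\sup|\Lambda''|).\]

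The main obstacle is the uniform bound $|\Lambda'(0)|=O_{g}(1)$: the naive logarithmic-derivative estimate $|\Lambda'(0)/\Lambda(0)|=O_{g}(\sum_{s}a_{s})=O_{g}(a_{0})$ would be catastrophic. The saving comes from the identity $\sin(g\pi s/(g-1))=(-1)^{s}\sin(\pi s/(g-1))$, which gives $|\phi(s/(g-1))|=1/g$ for each $s=1,\ldots,g-2$. Hence $|\Lambda(0)|=g^{-\sum_{s}a_{s}}$, and
\[|\Lambda'(0)|\leq M_{g}\cdot\bigl(\textstyle\sum_{s}a_{s}\bigr)\cdot g^{-\sum_{s}a_{s}}\leq M_{g}\sup_{S\geq 0}Sg^{-S}=O_{g}(1),\]
where $M_{g}:=\max_{1\leq s\leq g-2}|\phi'(s/(g-1))/\phi(s/(g-1))|$. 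A parallel argument controls higher derivatives of $\Lambda$ at zero, yielding the key bound and hence the lemma.
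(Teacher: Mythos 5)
Your proposal is correct, and after the common opening (replacing $P(a_{0},\cdot)$ by a Gaussian via \cref{Local limit theorem application version} and Taylor-expanding in $\nu$, which is exactly how the paper begins) it proves the crucial cancellation step by a genuinely different mechanism. Both arguments reduce to the same key bound: in the paper this is \cref{psi lemma final}, that $\sum_{j_{1},\ldots,j_{g-2}}Je^{-J^{2}/2\sigma^{2}a_{0}}\prod_{s}\ru{sj_{s}}P(a_{s},j_{s})\ll 1$, which is precisely your $|\E\chi UG(U)|\ll a_{0}^{-1/2}$ up to the factor $(2\pi\sigma^{2}a_{0})^{-1/2}$. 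The paper extracts the cancellation combinatorially: it shows (Claim \ref{g claim}) that the summand is nearly unchanged when $j_{1}$ is shifted by $r\in\{0,\ldots,g-2\}$, then averages over complete residue classes of $j_{1}$ modulo $g-1$ so that $\sum_{r}\ru{r}=0$ kills the main term, with base $3$ treated separately by an odd-function argument. You instead pass to the Fourier side, where the roots of unity translate the characteristic function to the points $s/(g-1)$, and the identity $|\phi(s/(g-1))|=1/g$ makes $\Lambda$ and its derivatives near $0$ of size $O_{g}\bigl(A^{2}g^{-A}\bigr)$ with $A=\sum_{s\geq1}a_{s}$, beating the factor $A$ produced by differentiation; this handles all $g\geq3$ uniformly, and in fact never uses the hypotheses $|\nu|\leq Ca_{0}^{1/4}$ or $a_{0}\geq a_{s}$, which the paper's truncation arguments do use, so your route is slightly more general and arguably cleaner. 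Two small imprecisions, neither a gap: the quadratic Taylor remainder of $\Lambda$ contributes $O_{g}(a_{0}^{-1}\sup|\Lambda''|)$, not $O_{g}(a_{0}^{-3/2}\sup|\Lambda''|)$, and that supremum must be taken over the whole range $|\theta|\ll a_{0}^{-1/2}\log a_{0}$ rather than at $\theta=0$ only; both are harmless, since $|\phi(\theta+s/(g-1))|\leq g^{-1}e^{O_{g}(|\theta|)}$ there gives $\sup|\Lambda''|=O_{g}(1)$ on that range (for $a_{0}$ large; bounded $a_{0}$ makes the lemma trivial), so the remainder is $O_{g}(a_{0}^{-1})$, well within the required $O_{g}(a_{0}^{-1/2})$.
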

\begin{proof}
From the definition \cref{psi defn}, we see that
\begin{equation}\label{psi defn restated}
    \Psi(\mathbf{a};\nu)=\sum_{j_{1},\ldots, j_{g-2}}P(a_{0},\nu-\sum_{r=1}^{g-1}j_{r})\prod_{s=1}^{g-2}\ru{sj_{s}}P(a_{s},j_{s}).
\end{equation}
    First, we use the local limit theorem to estimate the term $P(a_{0},\nu-\sum_{r=1}^{g-2}j_{r})$. Let $J\coloneqq \sum_{r=1}^{g-2}j_{r}$. Note that $|J|< g^{2}\max a_{i}$ from \cref{j range remark}. From \cref{Local limit theorem application version}, we have 
    \begin{equation}\label{psi 0}
        P(a_{0},\nu-J)=(2\pi\sigma^{2}a_{0})^{-1/2}e^{-(\nu-J)^{2}/2\sigma^{2}a_{0}}+O_{g}(a_{0}^{-3/2}).
    \end{equation}
    Substituting this into \cref{psi defn restated} gives
    \begin{equation}\label{psi 1}
        \Psi(\mathbf{a};\nu)=(\sigma\sqrt{2\pi a_{0}})^{-1}e^{-\nu^{2}/2\sigma^{2}a_{0}}\sum_{j_{1},\ldots, j_{g-2}}e^{(2\nu J-J^{2})/2\sigma^{2}a_{0}}\prod_{s=1}^{g-2}\ru{sj_{s}}P(a_{s},j_{s})+O_{g}(a_{0}^{-3/2}),
    \end{equation}
    where the error term in \cref{psi 1} comes from that of \cref{psi 0}, and the fact that
    \begin{equation}\label{product of probs}
        \sum_{j_{1},\ldots, j_{g-2}}\prod_{s=1}^{g-2}P(a_{s},j_{s})=1.
    \end{equation}
By expanding the term $e^{-\nu^{2}/2\sigma^{2}a_{0}}=1+O_{C,g}(\nu^{2}a_{0}^{-1})$ in \cref{psi 1}, we will show that:
    \begin{claim}\label{psi 2 claim}
        \begin{equation}\label{psi 2 equation}
            \Psi(\mathbf{a};\nu)=(\sigma\sqrt{2\pi a_{0}})^{-1}\sum_{j_{1},\ldots, j_{g-2}}e^{(2\nu J-J^{2})/2\sigma^{2}a_{0}}\prod_{s=1}^{g-2}\ru{sj_{s}}P(a_{s},j_{s})+O_{C,g}(\nu^{2}a_{0}^{-3/2}).
        \end{equation}
    \end{claim} To establish the claim, we first show that $e^{(2\nu J-J^{2})/2\sigma^{2}a_{0}}\ll_{C,g}1$ by considering the ranges $|\nu|\leq |J|/2$ and $|\nu|>|J|/2$ separately. In the former case, when $|\nu|\leq |J|/2$,
    \[e^{(2\nu J-J^{2})/2\sigma^{2}a_{0}}\leq e^{(2|\nu J|-J^{2})/2\sigma^{2}a_{0}}\leq 1.\]
    In the latter case, when $|\nu|>|J|/2$, we use the assumption that $|\nu|\leq Ca_{0}^{1/4}$ to obtain
    \[e^{(2\nu J-J^{2})/2\sigma^{2}a_{0}}\leq e^{2\nu^{2}/\sigma^{2}a_{0}}\leq e^{2C^{2}/\sigma^{2}a_{0}^{1/2}}\ll_{C,g} 1.\]
    Thus using that $e^{(2\nu J-J^{2})/2\sigma^{2}a_{0}}\ll_{C,g}1$ and \cref{product of probs} gives
    \[\sum_{j_{1},\ldots, j_{g-2}}e^{(2\nu J-J^{2})/2\sigma^{2}a_{0}}\prod_{s=1}^{g-2}P(a_{s},j_{s})\ll_{C,g}1.\]
    This concludes the proof of \cref{psi 2 claim}.

    Now we discard the terms with $|J|\geq \sigma^{2}a_{0}/2|\nu|$ from \cref{psi 2 equation} (if there are any), showing that the contribution from these terms is negligible. Indeed, as $\sigma^{2}a_{0}/2|\nu|\leq |J|\leq g^{2}\max a_{s}$, and $\max a_{s}\leq a_{0}$,
    \begin{equation}\label{psi exp bound}
        e^{(2\nu J-J^{2})/2\sigma^{2}a_{0}}\leq e^{g^{2}|\nu|\max a_{s}/\sigma^{2}a_{0} -\sigma^{2}a_{0}/8\nu^{2}}\leq e^{Cg^{2}a_{0}^{1/4}/\sigma^{2}-\sigma^{2}a_{0}^{1/2}/8C^{2}}\ll_{C,g} a_{0}^{-10}.
    \end{equation}
    This additionally uses the assumption that $|\nu|\leq Ca_{0}^{1/4}$ for some $C>0$. Bounding the term $|\ru{sj_{s}}P(a_{s},j_{s})|\leq 1$ for all $s, j_{s}$ and using \cref{psi exp bound}, we have
    \[\Big | \sum_{\substack{j_{1},\ldots, j_{g-2}\\ \sigma^{2}a_{0}/2|\nu|\leq|J|\leq g^{2}a_{0}}} e^{(2\nu J-J^{2})/2\sigma^{2}a_{0}}\prod_{s=1}^{g-2}\ru{sj_{s}}P(a_{s},j_{s})\Big |\ll_{C,g} a_{0}^{-9},\]
    hence the contribution to \cref{psi 2 equation} from $(j_{1},\ldots, j_{g-2})$ such that $|J|$ is large can be absorbed into the overall error term of $O_{C,g}(\nu^{2}a_{0}^{-3/2})$. It remains to estimate the contribution to \cref{psi 2 equation} from $(j_{1},\ldots, j_{g-2}) $ with $|J|\leq \sigma^{2}a_{0}/2|\nu|$. To this end, we expand the term $e^{\nu J/\sigma^{2}a_{0}}$ in \cref{psi 2 equation}, giving
    \begin{equation}\label{psi proof eq}
        \Psi(\mathbf{a};\nu)=\frac{1}{\sigma\sqrt{2\pi a_{0}}}\sum_{\substack{j_{1},\ldots, j_{g-2}\\|J|\leq \sigma^{2}a_{0}/2|\nu|}}e^{-J^{2}/2\sigma^{2}a_{0}}\Big (1+\frac{\nu J}{\sigma^{2}a_{0}}+O_{g}\Big(\frac{\nu^{2}J^{2}}{a_{0}^{2}}\Big)\Big )\prod_{s=1}^{g-2}\ru{sj_{s}}P(a_{s},j_{s})+O_{C,g}(\nu^{2}a_{0}^{-3/2}).
    \end{equation}
    First, we show that
    \begin{equation}\label{psi main term}
        \frac{1}{\sigma\sqrt{2\pi a_{0}}}\sum_{\substack{j_{1},\ldots, j_{g-2}\\|J|\leq \sigma^{2}a_{0}/2|\nu|}}e^{-J^{2}/2\sigma^{2}a_{0}}\prod_{s=1}^{g-2}\ru{sj_{s}}P(a_{s},j_{s})=\Psi(\mathbf{a};0)+O_{C,g}(a_{0}^{-3/2}).
    \end{equation}
    To do so, note we can undo the truncation on the range of summation in \cref{psi main term}. If the contribution from the range $\sigma^{2}a_{0}/2|\nu|\leq |J|\leq g^{2}a_{0}$ is non-zero, then in this range:
    \[e^{-J^{2}/2\sigma^{2}a_{0}}\leq e^{-\sigma^{2}a_{0}/8\nu^{2}}\leq e^{-\sigma^{2}a_{0}^{1/2}/8C^{2}}\ll_{C,g}a_{0}^{-10}.\]
    Using this and \cref{product of probs} we have
    \begin{equation}\label{range truncation}
        \sum_{\substack{j_{1},\ldots, j_{g-2}\\|J|> \sigma^{2}a_{0}/2|\nu|}}e^{-J^{2}/2\sigma^{2}a_{0}}\prod_{s=1}^{g-2}\ru{sj_{s}}P(a_{s},j_{s})\ll_{C,g} a_{0}^{-10}.
    \end{equation}
  
    We now work in the full range of summation for $(j_{1},\ldots, j_{g-2})$. We apply \cref{Local limit theorem application version} to note that $e^{-J^{2}/2\sigma^{2}a_{0}}/\sigma\sqrt{2\pi a_{0}}=P(a_{0},-J)+O_{g}(a_{0}^{-3/2})$, hence
    \begin{align*}
         \frac{1}{\sigma\sqrt{2\pi a_{0}}}&\sum_{j_{1},\ldots, j_{g-2}}e^{-J^{2}/2\sigma^{2}a_{0}}\prod_{s=1}^{g-2}\ru{sj_{s}}P(a_{s},j_{s})\\&=\sum_{j_{1},\ldots, j_{g-2}}P(a_{0},-J)\prod_{s=1}^{g-2}\ru{sj_{s}}P(a_{s},j_{s})+O_{C,g}(a_{0}^{-3/2})\\
         &=\Psi(\mathbf{a};0)+O_{C,g}(a_{0}^{-3/2}),
    \end{align*}
    using the definition in \cref{psi defn} to obtain the second equality. This establishes \cref{psi main term}, giving the main term in \cref{psi lemma}.

   We now show that the remaining terms in \cref{psi proof eq} contribute only to the error term in \cref{psi lemma}. The $O_{g}(\nu^{2}J^{2}a_{0}^{-2})$ term within the summation over $(j_{1},\ldots, j_{g-2})$ in \cref{psi proof eq} can be absorbed into the error term $O_{C,g}(\nu^{2}a_{0}^{-3/2})$. Indeed, as $e^{-J^{2}/2\sigma^{2}a_{0}}J^{2}\leq 2\sigma^{2}a_{0}/e$, the contribution from this term is bounded as follows:
    \[\frac{\nu^{2}}{a_{0}^{5/2}}\sum_{\substack{j_{1},\ldots, j_{g-2}\\|J|\leq \sigma^{2}a_{0}/2|\nu|}}\prod_{s=1}^{g-2}P(a_{s},j_{s})e^{-J^{2}/2\sigma^{2}a_{0}}J^{2}\ll_{g} \frac{\nu^{2}}{a_{0}^{3/2}}\sum_{\substack{j_{1},\ldots, j_{g-2}\\|J|\leq \sigma^{2}a_{0}/2|\nu|}}\prod_{s=1}^{g-2}P(a_{s},j_{s})\ll_{g} \frac{\nu^{2}}{a_{0}^{3/2}},\]
    additionally using \cref{product of probs} in the final inequality. 

    We have shown that
    \begin{equation}\label{psi proof eq 2}
        \Psi(\mathbf{a};\nu)=\Psi(\mathbf{a};0)+\frac{\nu}{\sigma^{3}a_{0}^{3/2}\sqrt{2\pi}}\sum_{j_{1},\ldots, j_{g-2}}e^{-J^{2}/2\sigma^{2}a_{0}}J\prod_{s=1}^{g-2}\ru{sj_{s}}P(a_{s},j_{s})+O_{C,g}(\nu^{2}a_{0}^{-3/2}).
    \end{equation}
    Finally we show that \cref{psi proof eq 2} implies the statement of the lemma. It suffices to prove
    \begin{equation}\label{psi lemma final}
        \sum_{j_{1},\ldots, j_{g-2}}e^{-J^{2}/2\sigma^{2}a_{0}}J\prod_{s=1}^{g-2}\ru{sj_{s}}P(a_{s},j_{s})\ll_{C,g} 1.
    \end{equation}
Before proving \cref{psi lemma final}, we remark that the proof is essentially trivial in base 3. In this case, the equation on the left hand side of \cref{psi lemma final} equals
\[\sum_{j}(-1)^{j}je^{-j^{2}/2\sigma^{2}a_{0}}P(a_{1},j)=\sum_{j}g(j)=0,\]
as $g(j)\coloneq (-1)^{j}je^{-j^{2}/2\sigma^{2}a_{0}}P(a_{1},j)$ is an odd function. For $g\geq 4$, the proof of \cref{psi lemma final} is rather more involved, and we make use of the cancellation coming from the $\ru{sj_{s}}$ terms instead of the sign of $J$. Let
\begin{equation}\label{g defn}
        g(j_{1},\ldots, j_{g-2})\coloneq Je^{-J^{2}/2\sigma^{2}a_{0}}\prod_{s=1}^{g-2}P(a_{s},j_{s}).
    \end{equation}
We first work with the tuples $(j_{1},\ldots, j_{g-2})$ for which the following condition holds, in addition to the assumption throughout that $|j_{s}|\leq a_{s}(g-1)/2$. Suppose that $J=j_{1}+\ldots+j_{g-2}$ is such that
\begin{equation}\label{J tuples}
    \Big|\frac{2Jr+r^{2}}{2\sigma^{2}a_{0}}\Big |\leq 1/2\textrm{ for all  }r\in \{0,\ldots, g-2\}.
\end{equation}
For such $(j_{1},\ldots, j_{g-2})$, the function $g(j_{1},\ldots, j_{g-2})$ doesn't vary too much when incrementing $j_{1}$ by a small amount, as shown in the following claim.
\begin{claim}\label{g claim}
    For $(j_{1},\ldots, j_{g-2})$ such that \cref{J tuples} holds, and for all $r=0,\ldots, g-2$,
    \[|g(j_{1},\ldots, j_{g-2})-g(j_{1}+r,j_{2},\ldots, j_{g-2})|\ll_{g} e^{-J^{2}/2\sigma^{2}a_{0}}\Big (\frac{J^{2}}{a_{0}}(P(a_{1},j_{1})+a_{1}^{-3/2})+P(a_{1},j_{1})\Big )\prod_{s=2}^{g-2}P(a_{s},j_{s}).\]
\end{claim}

\begin{proof}[Proof of \cref{g claim}]
First we note that the claim is trivial when $J=0$. In this case, $(j_{1},\ldots, j_{g-2})=(0,\ldots, 0)$ and $g(0,\ldots, 0)=0$, and it follows from \cref{g defn} that $g(r,0,\ldots, 0)\ll_{g} \prod_{s=1}^{g-1}P(a_{s},j_{s})$. Thus from now on, we assume that $|J|\geq 1/2$. From \cref{Local limit theorem application version},
\begin{equation}\label{prob +r roughly const}
    |P(a_{1},j_{1})-P(a_{1},j_{1}+r)|\ll_{g}a_{1}^{-1/2}|e^{-a_{1}^{2}/2\sigma^{2}a_{1}}-e^{-(a_{1}+r)^{2}/2\sigma^{2}a_{1}}|+O_{g}(a_{1}^{-3/2}) \ll_{g} a_{1}^{-3/2}.
\end{equation}
Here, we also use that the function $e^{-x^{2}}$ is Lipschitz to obtain the final inequality. We also have, under the assumptions of \cref{J term suff cancellation claim}, 
\begin{equation}\label{exp Jr term}
    \exp\Big (-\frac{2Jr+r^{2}}{2\sigma^{2}a_{0}} \Big )=1+O_{g}\Big (\frac{J}{a_{0}}\Big ).
\end{equation}
From the definition in \cref{g defn}, 
\begin{align*}
    & |g(j_{1},\ldots, j_{g-2})-g(j_{1}+r,j_{2},\ldots, j_{g-2})|\\
     &=e^{-J^{2}/2\sigma^{2}a_{0}}\prod_{s=2}^{g-2}P(a_{s},j_{s})|JP(a_{1},j_{1})-(J+r)e^{-(2Jr+r^{2})/2\sigma^{2}a_{0}}P(a_{1},j_{1}+r)|.
\end{align*}
Expanding the term $e^{-(2Jr+r^{2})/2\sigma^{2}a_{0}}$ using \cref{exp Jr term}, and using \cref{prob +r roughly const} to estimate $P(a_{1}+r,j_{1})$, we obtain,
\begin{align*}
    |JP(a_{1},j_{1})-(J+r)e^{-(2Jr+r^{2})/2\sigma^{2}a_{0}}&P(a_{1},j_{1}+r)|\\&=|JP(a_{1},j_{1})-(J+r)(1+O_{g}(J/a_{0}))(P(a_{1},j_{1})+O_{g}(a_{1}^{-3/2})))|\\
    &\ll_{g}\frac{J^{2}}{a_{0}}(P(a_{1},j_{1})+a_{1}^{-3/2})+P(a_{1},j_{1}).
\end{align*}
To simplify the final expression, we have used that $r\ll_{g} 1$, and that for any tuple $(j_{1},\ldots, j_{g-2})$, $J^{2}\geq |J|/2$. 
\end{proof}
We use \cref{g claim} to prove the following:
\begin{equation}\label{J term suff cancellation claim}
     \sum_{j_{1},\ldots, j_{g-2}}e^{-J^{2}/2\sigma^{2}a_{0}}J\prod_{s=1}^{g-2}\ru{sj_{s}}P(a_{s},j_{s})\ll_{g} 1.
\end{equation} 
We have
\begin{equation*}
    \sum_{j_{1},\ldots, j_{g-2}}e^{-J^{2}/2\sigma^{2}a_{0}}J\prod_{s=1}^{g-2}\ru{sj_{s}}P(a_{s},j_{s})=\sum_{j_{1},\ldots, j_{g-2}}g(j_{1},\ldots, j_{g-2})\prod_{s=1}^{g-2}\ru{sj_{s}}
\end{equation*}
To get the cancellation required, we use \cref{g claim} to assert that $g(j_{1}+r,\ldots, j_{g-2})$ is essentially constant as $r$ varies in $\{0,\ldots, g-2\}$, which allows us to get cancellation from the $\prod_{s=1}^{g-2}\ru{sj_{s}}$ term. Our aim is to split the range of summation into sets where $j_{1}$ has a fixed congruence modulo $g-1$. Note that if $g$ is even and $a_{1}$ odd, the range of $j_{1}$ is not contained in the integers, as $j_{1} \in \{0,\ldots, a_{1}(g-1)\}-a_{1}(g-1)/2$. In this case, $j_{1}+1/2\subset \Z$, so we can run the following argument by multiplying through by a factor of $\ru{1/2}$. If $a_{1}(g-1)/2\in \Z$, let $x\coloneq -a_{1}(g-1)/2$, otherwise let $x\coloneq -a_{1}(g-1)/2-1/2$. Note that from \cref{j range remark}, the range of $j_{1}$ is a multiple of $g-1$. Thus splitting the range of $j_{1}$ (compensating by a factor of $\ru{1/2}$ if necessary), 
\begin{align}
    \sum_{j_{1},\ldots, j_{g-2}}g(j_{1},\ldots, j_{g-2})\prod_{s=1}^{g-2}\ru{sj_{s}}=\sum_{\substack{j_{1},\ldots, j_{g-2}\\ j_{1}\equiv x \md{g-1}}}\prod_{s=1}^{g-2}\ru{sj_{s}}\sum_{r=0}^{g-2}\ru{r}g(j_{1}+r,j_{2},\ldots, j_{g-2})\nonumber\\
    =\sum_{\substack{j_{1},\ldots, j_{g-2}\\ j_{1}\equiv x \md{g-1}}}\prod_{s=1}^{g-2}\ru{sj_{s}}\sum_{r=0}^{g-2}\ru{r}\Big ( g(j_{1},j_{2},\ldots, j_{g-2})+O_{g}(E(J,a_{0},a_{1}))\Big )\label{final g sum}
\end{align}
from \cref{J term suff cancellation claim}, where
\[E(J,a_{0},a_{1})=e^{-J^{2}/2\sigma^{2}a_{0}}\Big (\frac{J^{2}}{a_{0}}(P(a_{1},j_{1})+a_{1}^{-3/2})+P(a_{1},j_{1})\Big )\prod_{s=2}^{g-2}P(a_{s},j_{s}).\]
The first term in the sum over $r$ is zero:
\begin{align*}
    &\sum_{\substack{j_{1},\ldots, j_{g-2}\\ j_{1}\equiv x \md{g-1}}}\prod_{s=1}^{g-2}\ru{sj_{s}}\sum_{r=0}^{g-2}\ru{r}g(j_{1},j_{2},\ldots, j_{g-2})\\&=\sum_{\substack{j_{1},\ldots, j_{g-2}\\ j_{1}\equiv x \md{g-1}}}\prod_{s=1}^{g-2}\ru{sj_{s}}g(j_{1},j_{2},\ldots, j_{g-2})\sum_{r=0}^{g-2}\ru{r}=0.
\end{align*}
Finally, we show that the error term in \cref{final g sum} is $\ll_{g} 1$, that is, we show
\begin{equation}\label{final g sum error}
    \sum_{j_{1},\ldots, j_{g-2}}E(J,a_{0},a_{1})\ll_{g} 1.
\end{equation}
First note that from \cref{product of probs}, 
\begin{equation}\label{no J sq term}
    \sum_{j_{1},\ldots, j_{g-2}}e^{-J^{2}/2\sigma^{2}a_{0}}\prod_{s=1}^{g-2}P(a_{s},j_{s})\ll_{g} 1.
\end{equation}
and further using that $\sup J^{2}e^{-J^{2}/2\sigma^{2}a_{0}}\ll_{g} a_{0}$ gives
\begin{equation}\label{J sq 1}
    \frac{1}{a_{0}}\sum_{j_{1},\ldots, j_{g-2}}J^{2}e^{-J^{2}/2\sigma^{2}a_{0}}\prod_{s=1}^{g-2}P(a_{s},j_{s})\ll_{g} 1.
\end{equation}
    Moreover, a similar statement holds when replacing $P(a_{1},j_{1})$ by $a_{1}^{-3/2}$:
    \begin{align}
         \frac{1}{a_{0}a_{1}^{3/2}}\sum_{j_{1},\ldots, j_{g-2}}J^{2}e^{-J^{2}/2\sigma^{2}a_{0}}\prod_{s=2}^{g-2}P(a_{s},j_{s})&\ll_{g} \sum_{j_{1}}\frac{1}{a_{1}^{3/2}}\sum_{j_{2},\ldots, g-2}\prod_{s=2}^{g-2}P(a_{s},j_{s})\nonumber\\
         &=\sum_{j_{1}}\frac{1}{a_{1}^{3/2}}\ll_{g} \frac{1}{a_{1}^{1/2}},\label{J sq 2}
    \end{align}
    where the last line uses that $j_{1}$ ranges over $|j_{1}|\leq a_{1}(g-1)/2$.
    Combining \cref{no J sq term}, \cref{J sq 1} and \cref{J sq 2} gives \cref{final g sum error}.

Assuming \cref{J term suff cancellation claim}, it suffices to show that the contribution from $(j_{1},\ldots, j_{g-2})$ such that \cref{J tuples} doesn't hold is bounded. Suppose for $(j_{1},\ldots, j_{g-2})$ and $J=j_{1}+\ldots +j_{g-2}$,
\[\Big|\frac{2Jr+r^{2}}{2\sigma^{2}a_{0}}\Big |>\frac{1}{2}.\]
In particular, for such $J$, $|J|>\sigma^{2}a_{0}/2-(g-1)/2$, therefore
\begin{equation*}
    J^{2}e^{-J^{2}/2\sigma^{2}a_{0}}\ll_{g} a_{0}^{2}e^{-\sigma^{2}a_{0}/8}\ll_{g} a_{0}^{-10}.
\end{equation*}
Thus the contribution from $j_{1},\ldots, j_{g-2}$ where $|J|$ is large is
\begin{align*}
    \Big |\sum_{\substack{j_{1},\ldots, j_{g-2}\\ |J|\geq \sigma^{2}a_{0}/4}}J^{2}e^{-J^{2}/2\sigma^{2}a_{0}}\prod_{s=1}^{g-2}P(a_{s},j_{s})\Big |&\ll_{g} a_{0}^{-10}.\qedhere
\end{align*}
\end{proof}
\cref{psi lemma} can be used to bound $\Psi(\mathbf{a};0)$ for certain $\mathbf{a}$. 
\begin{cor}\label{psi 0 bound}
    Suppose $\mathbf{a}=(a_{0},\ldots, a_{g-2})$ is such that $a_{i}\geq 0$ and $a_{0}\geq a_{i}$ for $0\leq i\leq g-2$. Furthermore, let $0\leq m\leq Ca_{0}^{1/4}$ be an integer, for some $C>0$. Let $1\leq t \leq g-2$. For $\mathbf{a}'=(a_{0},\ldots, a_{t}+m,\ldots,a_{g-2})$, we have
    \[\Psi(\mathbf{a}';0)\leq \frac{1}{g^{m}}+O_{C,g}(m^{2}a_{0}^{-3/2}).\]
\end{cor}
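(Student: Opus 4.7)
The plan is to split the extra $m$ digits added to the $t$-th block off as a separate random variable, reducing the claim to an application of \cref{psi lemma} plus a character computation. Concretely, using the convolution identity $P(a_t+m,j_t) = \sum_{u+v=j_t} P(a_t,u)P(m,v)$ together with $\ru{tj_t}=\ru{tu}\ru{tv}$, and substituting into the defining formula \cref{psi defn}, a short reindexing (the constraint $\sum_i j_i = 0$ translates to $\sum_i \tilde j_i = -v$ on the inner sum) yields the clean identity
\[\Psi(\mathbf{a}';0) = \sum_v \ru{tv}\,P(m,v)\,\Psi(\mathbf{a};-v),\]
where $v$ ranges over values with $|v| \leq m(g-1)/2$.

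For each such $v$ we have $|v| \leq \tfrac{1}{2}(g-1)C\,a_0^{1/4}$, so \cref{psi lemma} applies (with constant $\tfrac12(g-1)C$) and gives $\Psi(\mathbf{a};-v) = \Psi(\mathbf{a};0) + O_{C,g}(v^2 a_0^{-3/2})$. Combined with the trivial bound $|\Psi(\mathbf{a};0)| \leq 1$ (from the expectation form in \cref{Psi expectation form}, since the integrand lies on the unit circle) and the variance bound $\sum_v v^2 P(m,v) = m\sigma^2 \ll_g m$, substitution produces
\[|\Psi(\mathbf{a}';0)| \leq \Big|\sum_v \ru{tv}P(m,v)\Big| + O_{C,g}(m\,a_0^{-3/2}).\]

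The remaining step is to evaluate $|\sum_v \ru{tv}P(m,v)|$. By independence of the $m$ digits, the sum factors as $(\E\ru{tY})^m$, and for $Y$ uniform on $\{0,\ldots,g-1\}-(g-1)/2$ I would compute
\[\E\ru{tY} = \frac{e(-t/2)}{g}\sum_{y=0}^{g-1} e\Big(\frac{ty}{g-1}\Big).\]
For $1\leq t\leq g-2$, the quantity $e(t/(g-1))$ is a nontrivial $(g-1)$-st root of unity, so $\sum_{y=0}^{g-2} e(ty/(g-1)) = 0$, while the $y=g-1$ term contributes $e(t)=1$; hence the inner sum equals $1$ and $|\E\ru{tY}|=1/g$. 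Therefore $|(\E\ru{tY})^m|=1/g^m$, giving
\[|\Psi(\mathbf{a}';0)| \leq \frac{1}{g^m} + O_{C,g}(m\,a_0^{-3/2}),\]
which is stronger than the stated bound (noting $m \leq m^2$ for $m\geq 1$, and the case $m=0$ is trivial).

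The main obstacle is the careful bookkeeping in the first step: one has to verify that splitting $j_t = u+v$ and collecting terms correctly recovers the defining sum for $\Psi(\mathbf{a};-v)$, with the offset $-v$ appearing exactly in the argument of $P(a_0,\cdot)$. Once this decomposition is established, the rest is a short combination of \cref{psi lemma}, the variance of $Y$, and the geometric-series computation above.
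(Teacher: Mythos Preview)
Your proof is correct and follows essentially the same approach as the paper: the paper uses the expectation form (\cref{Psi expectation form}) to write $\Psi(\mathbf{a}';0)=\E_{Z}\ru{tZ}\,\Psi(\mathbf{a};-\sx{Z})$, which is exactly your identity $\Psi(\mathbf{a}';0)=\sum_v \ru{tv}P(m,v)\Psi(\mathbf{a};-v)$ recast probabilistically, and then applies \cref{psi lemma} and the same character computation $|\E_Y\ru{tY}|=1/g$. Your use of the variance $\sum_v v^2 P(m,v)=m\sigma^2$ to get the error $O_{C,g}(m\,a_0^{-3/2})$ is a minor sharpening over the paper's cruder uniform bound $|\sx{Z}|\ll_g m$ yielding $O_{C,g}(m^2 a_0^{-3/2})$.
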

\begin{proof}
Let $W_{j,i}, $ for $ 0\leq j\leq g-2, 0\leq i\leq a_{j}-1 $, and $Z_{j}, $ for $0\leq j\leq m-1$, be i.i.d. copies of $Y$, and let $W_{j}\coloneq \sum_{i=0}^{a_{j}-1}W_{j,i}g^{i}$ and $Z\coloneq \sum_{i=0}^{m-1}Z_{i}g^{i}$. With the alternate definition of $\Psi(\mathbf{a}';0)$ as stated in \cref{Psi expectation form}, we have
    \begin{align*}
        \Psi(\mathbf{a}';0)&=\E_{W_{0},\ldots, W_{g-2},Z}\Ru{\sum_{r=1}^{g-2}r W_{r}+tZ}\ind_{\sum_{j=0}^{g-2}\sx{W_{j}}=-\sx{Z}}.\end{align*}
        Separating out the contribution from $Z$ and using the definition of $\Psi(\mathbf{a};-\sx{Z})$ from \cref{Psi expectation form},
\begin{align*}
      \Psi(\mathbf{a}';0)&=\E_{Z}\ru{tZ}\E_{W_{0},\ldots, W_{g-2}}\Ru{\sum_{r=1}^{g-2}rW_{r}}\ind_{\sum_{j=0}^{g-2}\sx{W_{j}}=-\sx{Z}}\\
      &=\E_{Z}\ru{tZ}\Psi(\mathbf{a};-\sx{Z}).
\end{align*}
As $|\sx{Z}|\ll_{g} Ca_{0}^{1/4}$, \cref{psi lemma} applies to give
    \[\Psi(\mathbf{a};-\sx{Z})=\Psi(\mathbf{a};0)+O_{C,g}(m^{2}a_{0}^{-3/2}).\]
    Hence
    \[ \Psi(\mathbf{a}';0)=\E_{Z}\ru{tZ}\Psi(\mathbf{a};0)+O_{C,g}(m^{2}a_{0}^{-3/2}).\]
From \cref{ru sum xb equiv}, that is, using that $\sx{Z}\equiv Z\md{g-1}$, and the fact that the $Z_{j}$ are i.i.d. copies of $Y$, we have
    \[\E_{Z}\ru{tZ}=\prod_{j=0}^{m-1}\E_{Z_{j}}\ru{tZ_{j}}=\Big (\E_{Y}\ru{tY}\Big )^{m}\ll \frac{1}{g^{m}}.\]
    Here we also use that $t\neq 0$ and that $Y+(g-1)/2$ uniformly takes values in $\{0,\ldots, g-1\}$ . Finally, using the fact that $|\Psi(\mathbf{a};0)|\leq 1$ for any tuple $\mathbf{a}$ gives the result.
\end{proof}
\section{Major arcs contribution}\label{Section: major arc} 
In this section, we establish \cref{major arc contrib}: that the contribution to \cref{main S count} from the major arcs $\mathfrak{M}$ gives the main term in \cref{Main theorem S count}. Recall that we have the following major arcs $\mathfrak{M}$,
\[\mathfrak{M}\coloneqq \bigcup_{j=0}^{g-2}\big[\frac{j}{g-1}-\varepsilon,\frac{j}{g-1}+\varepsilon\big ]\]
for $\varepsilon\coloneq K^{3/4}g^{-K}/(g-1)$. We also have that $f_{i}(\theta+j/(g-1))=\ru{jk_{i}}f_{i}(\theta)$ for $j\in \Z$ from \cref{cong fact}. Therefore in order to evaluate the contribution from the major arcs to \cref{main S count}, it suffices to consider the contribution from $\mathfrak{M}$ around 0, as
\begin{align}
    \int_{\mathfrak{M}}\prod_{i=1}^{3}f_{i}(\theta)e(-M\theta)\dif\theta&=\sum_{j=0}^{g-2}\int_{|\theta|\leq \varepsilon}\prod_{i=1}^{3}f_{i}(\theta+\tfrac{j}{g-1})e(-M(\theta+\tfrac{j}{g-1}))\dif\theta\nonumber \\
    &=\Big (\sum_{j=0}^{g-2}\ru{j(k_{1}+k_{2}+k_{3}-M)}\Big)\int_{|\theta|\leq \varepsilon}\prod_{i=1}^{3}f_{i}(\theta)e(-M\theta)\dif\theta \nonumber\\
    &=(g-1)\int_{|\theta|\leq \varepsilon}\prod_{i=1}^{3}f_{i}(\theta)e(-M\theta)\dif\theta \mbox{\qquad\qquad by \cref{k cond: congruences}.} \label{major arc near zero integral}
\end{align}
One can view the factor of $(g-1)$ in \cref{major arc near zero integral} as a very simple singular series, with the integral term being our singular integral. To evaluate this integral, we require the following lemma. This gives an asymptotic for $f_{i}(\theta)$ on a range around 0 which includes $[-\varepsilon,\varepsilon]$ as well as on some minor arc points; the asymptotic will be used to bound these points later. By combining \cref{major arc asymp lemma} with \cref{trans inv}, we can get an asymptotic for $f_{i}(\theta)$ for $\theta \in \mathfrak{M}$ more generally.
\begin{lemma}\label{major arc asymp lemma}
    Let $\ell\coloneqq \floor{C_{0}\log K}$ be as defined in \cref{l choice}. For all $\theta$ such that $\V{\theta}\leq g^{-K+2\ell^{2}}/(g-1)$,
    \[f_{i}(\theta)=\frac{g^{K}}{\sqrt{2\pi\sigma^{2}K}}\int_{0}^{1}e(g^{K}\theta x)\dif x+O_{g}\Big(\frac{g^{K}\ell^{4}}{K^{3/2}}\Big).\]
\end{lemma}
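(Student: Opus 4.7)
The plan is to split $n \in [0, g^K)$ at scale $g^{K-L}$ with $L := 2\ell^2$, estimate the restricted inner sum via the local limit theorem, and collapse the outer geometric sum with an exact identity. Writing each $n$ uniquely as $n = g^{K-L}n_H + n_L$ with $0 \le n_H < g^L$ and $0 \le n_L < g^{K-L}$, and using additivity of digit sums,
\[
f_i(\theta) = \sum_{n_H < g^L} e(g^{K-L}n_H\theta)\, \Sigma(k_i - s_g(n_H), \theta),\qquad \Sigma(\nu,\theta) := \sum_{\substack{n_L < g^{K-L}\\ s_g(n_L) = \nu}} e(n_L\theta).
\]
The hypothesis on $\theta$ makes $g^j|\theta| \le 1/(g(g-1))$ for every $j \le K-L-1$, so each individual low-order digit contributes a phase close to $1$, while all fast oscillation is absorbed by $e(g^{K-L}n_H\theta)$.

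The central sub-claim is that for each valid $\nu = k_i - s_g(n_H)$,
\[
\Sigma(\nu,\theta) = \frac{g^{K-L}}{\sqrt{2\pi\sigma^2(K-L)}}\int_0^1 e(g^{K-L}\theta x)\,dx + O_g\!\Big(\frac{g^{K-L}\ell^4}{K^{3/2}}\Big).
\]
To establish this, we recast $\Sigma(\nu,\theta) = g^{K-L}e((g^{K-L}-1)\theta/2)\,\E[e(X^L\theta)\ind_{\sx{X^L}=\nu-\mu_{K-L}}]$ via the probabilistic model of \cref{section prob model}, and prove the near-independence
\[
\E[e(X^L\theta)\ind_{\sx{X^L}=\nu'}] = P(K-L,\nu')\,\E[e(X^L\theta)]\,(1 + O(\ell^2/K)),\qquad \nu' = \nu - \mu_{K-L}.
\]
The cleanest route is via a conditional-expectation interpretation: $(X^L,\sx{X^L})$ is approximately bivariate Gaussian by a local CLT, so conditioning on $\sx{X^L} = \nu'$ shifts the mean of $X^L$ by $\mu_{\mathrm{cond}} := (g^{K-L}-1)\nu'/((g-1)(K-L))$, and the ratio of conditional to unconditional characteristic function at frequency $\theta$ is $e(\mu_{\mathrm{cond}}\theta) = 1 + O(\ell^2/K)$ (up to a Gaussian-variance correction of size $O(1/K)$). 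A rigorous version proceeds by Fourier inversion of $\ind_{\sx{X^L}=\nu'}$ followed by Taylor expansion of $\log[\varphi(g^j\theta+\phi)/\varphi(g^j\theta)]$ in the small parameter $g^j\theta$, where $\varphi$ is the characteristic function of $Y$, using that $\log \varphi$ is even around $0$ so the cubic term cancels after summation in $j$. Combined with the identity $e((g^{K-L}-1)\theta/2)\E[e(X^L\theta)] = \int_0^1 e(g^{K-L}\theta x)\,dx\cdot(1 + O(|\theta|))$ (immediate from $\sum_{n_L<g^{K-L}}e(n_L\theta) = (e(g^{K-L}\theta)-1)/(e(\theta)-1)$) and the LLT estimate $P(K-L,\nu') = (2\pi\sigma^2(K-L))^{-1/2} + O(\ell^4/K^{3/2})$ from \cref{Local limit theorem application version} (where $|\nu'| = O(\ell^2)$ introduces the $\ell^4$), this yields the sub-claim.

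Substituting back and aggregating the per-$n_H$ error over the $g^L = g^{2\ell^2}$ values of $n_H$ gives total error $O(g^K\ell^4/K^{3/2})$. The remaining main term collapses via the exact identity
\[
\int_0^1 e(g^{K-L}\theta x)\,dx \cdot \sum_{n_H<g^L}e(g^{K-L}n_H\theta) = g^L\int_0^1 e(g^K\theta x)\,dx
\]
(both sides equal $(e(g^K\theta)-1)/(2\pi i g^{K-L}\theta)$), yielding $\frac{g^K}{\sqrt{2\pi\sigma^2(K-L)}}\int_0^1 e(g^K\theta x)\,dx$; absorbing $(K-L)^{-1/2} = K^{-1/2}(1 + O(L/K))$ into the stated error (which contributes only $O(g^K\ell^2/K^{3/2})$, dominated by the claimed bound) completes the proof.

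The main obstacle is the near-independence step: $|n_L\theta|$ is of order $1$ rather than $o(1)$, so no pointwise approximation of $e(n_L\theta) \approx 1$ is available and one must work at the level of characteristic functions of the full vector $X^L$. The $\ell^4$ (rather than $\ell^2$) in the final error arises from the quadratic Gaussian correction in the LLT: since $s_g(n_H)$ is not concentrated around its mean and ranges over a window of size $\Theta(L) = \Theta(\ell^2)$, we have $|\nu'| = O(\ell^2)$, hence $(\nu')^2/(K-L) = O(\ell^4/K)$.
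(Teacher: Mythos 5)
Your high-level strategy — partition the digits of $n$ into a high block that determines $e(n\theta)$ and a low block whose digit-sum distribution is controlled by the local limit theorem, then sum a geometric series and compare to the integral — is exactly the right one, and the outer manipulations (the exact identity collapsing $\int_0^1 e(g^{K-L}\theta x)\,dx\cdot\sum_{n_H}e(g^{K-L}n_H\theta)$, absorbing $(K-L)^{-1/2}-K^{-1/2}$ into the error) are correct. But you have placed the cut at the wrong level, and the step you yourself flag as ``the main obstacle'' is a genuine gap, not a detail. With $L=2\ell^2$ the total low-order phase $\sum_{j<K-L}X_jg^j\theta$ has modulus up to $1/(g-1)$, so $e(n_L\theta)$ cannot be dropped and you must prove the decoupling estimate $\E[e(X^L\theta)\ind_{\sx{X^L}=\nu'}] = P(K-L,\nu')\,\E[e(X^L\theta)]\,(1+O(\ell^2/K))$. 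You offer a conditional-Gaussian heuristic and a remark about Taylor expansion of $\log\varphi$ in the Fourier integral, but neither is carried out; making this rigorous is essentially a two-dimensional local CLT with an error term of the right strength, which is a substantial piece of work and is the whole content of the proposal as written.

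The paper avoids this entirely by taking the high block to be $\mathcal{D}=\{K-2\ell^2-\ell,\dots,K-1\}$, i.e.\ $L=2\ell^2+\ell$ instead of $2\ell^2$. That extra buffer of $\ell$ digits pushes the low-order contribution down to $\big|\sum_{j\notin\mathcal{D}}X_jg^j\theta\big|\le g^{-\ell}$, so $e(X\theta)=e(X_\mathcal{D}\theta)+O(g^{-\ell})$ holds \emph{pointwise}, with $g^{-\ell}\ll_g K^{-3/2}$ by the choice of $\ell$ in \cref{l choice}. Then the ``decoupling'' $\E_X\big[e(X_\mathcal{D}\theta)\ind_{\sx{X}=\xi_i}\big]=\E_{X_\mathcal{D}}\big[e(X_\mathcal{D}\theta)\,P(K-L,\xi_i-\sx{X_\mathcal{D}})\big]$ is exact by independence, and all that remains is the one-dimensional LLT expansion of $P(K-L,\cdot)$ that you already invoke (with $|\xi_i-\sx{X_\mathcal{D}}|\ll_g|\mathcal{D}|\ll\ell^2$, giving the $\ell^4K^{-3/2}$ error). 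So the fix is not to prove the near-independence you sketch, but to move the split point by $\ell$ digits, after which your proof closes without any bivariate argument.
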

To prove \cref{major arc asymp lemma}, we switch to the probabilistic model for $n<g^{K}$ outlined in \cref{section prob model}. Recall that $X_{j}$, $j=0,\ldots, K-1$, are i.i.d. copies of the uniform random variable taking values in $\{0,\ldots, g-1\}-(g-1)/2$, with $\xb=\sum_{j=0}^{K_1}X_{j}g^{j}$. For an indexing set $\mathcal{S}\subseteq \{0,\ldots, K-1\}$, let $\xb_{\mathcal{S}}$ denote the random variable 
\begin{equation}\label{Xs defintion}
    \xb_{\mathcal{S}}\coloneq \sum_{j\in \mathcal{S}}X_{j}g^{j}.
    \end{equation}

\begin{proof}[Proof of \cref{major arc asymp lemma}]
We can rewrite $f_{i}(\theta)$ as the following expectation from \cref{lemma expectation version f}, 
 \begin{equation}\label{equation f exp major arc}    
f_{i}(\theta)=g^{K}e\Big(\frac{g^{K}-1}{2}\theta\Big)\mathbb{E}_{\xb}e(\xb\theta)1_{\sx{\xb}=\xi_{i}}\end{equation}
where $\xi_{i}\coloneq k_{i}-\mu_{K}$ is the distance of the target digit sum, $k_{i}$, from the average value. Note that the condition given in \cref{k cond close to av} implies that $|\xi_{i}|\ll_{g} 1$. 

As $|\theta|\leq g^{-K+2\ell^{2}}/(g-1)$, the value of $e(\xb\theta)$ is determined mainly by the value of the random variables $X_{K-2\ell ^{2}-\ell},\ldots, X_{K-1}$, up to a small error. Let \[\mathcal{D}\coloneq \{K-2\ell ^{2}-\ell,\ldots, K-1\} \textrm{ and }\mathcal{E}=\{0,\ldots, K-1\}\setminus \mathcal{D}.\] Then as $|\sum_{j\notin \mathcal{D}}X_{j}g^{j}\theta|\leq g^{-\ell}$, we have  $e(\xb\theta)=e(\xbd\theta)+O(g^{-\ell})$. Here, $\xbd$ is defined as in \cref{Xs defintion}. Let $L\coloneq |\mathcal{D}| = 2\ell^{2}+\ell$. Thus,
\begin{align*}    \E_{\xb}e(\xb\theta)\ind_{\sx{\xb}=\xi_{i}}&=\E_{\xbd}e(\xbd\theta)\E_{\xbee}\ind_{\sx{\xb}=\xi_{i}}+O(g^{-\ell})\\
    &=\E_{\xbd}e(\xbd\theta)P(K-L,\xi_{i}-\sx{\xbd})+O(g^{-\ell}),
    \end{align*} 
where the notation $P(T,t)$ is used to denote $\prob(X_{1}+\ldots +X_{T}=t)$. By \cref{Local limit theorem application version},
\begin{equation}\label{prob term 1}
    P(K-L,\xi_{i}-\sx{\xbd})=\frac{e^{-x^{2}/2}}{\sqrt{2\pi\sigma^{2}(K-L)}}+O_{g}((K-L)^{-3/2}),
\end{equation}
where $x=(\xi_{i}-\sx{\xbd})/\sqrt{\sigma^{2}(K-L)}$. Note that $x$ has size approximately $L/\sqrt{K}$, coming from the fact that $|\xi_{i}-\sx{\xbd}|\ll_{g} |\mathcal{D}|$ and $|\mathcal{D}|=L\ll l^{2}$. From expanding the exponential term in \cref{prob term 1} and using that $(K-L)^{-1/2}-K^{-1/2}=O(LK^{-3/2})$, we can remove the dependence of the values of $\xi_{i}$ and $\sx{\xbd}$ from $P(K-L,\xi_{i}-\sx{\xbd})$, giving
 \[P(K-L,\xi_{i}-\sx{\xbd})=\frac{1}{\sqrt{2\pi\sigma^{2}K}}+O_{g}(\ell^{4}K^{-3/2}).\]
    Hence the expectation over all digits in $\xb$ may be replaced by an average over the digits indexed by $\mathcal{D}$ only,   \begin{align}\label{expectation over D}\E_{\xb}e(\xb\theta)\ind_{\sx{\xb}=\xi_{i}}=\frac{1}{\sqrt{2\pi\sigma^{2}K}}\E_{\xbd}e(\xbd\theta)+O_{g}\big (\ell^{4}K^{-3/2}\big ).
    \end{align}
Note that $\xbd g^{-K+L}+(g^{L}-1)/2$ takes values in $\{0,\ldots, g^{L}-1\}$ uniformly at random, so we can write the expectation over $\xbd$ explicitly as a normalised sum,
\[\E_{\xbd}e(\xbd\theta)=\frac{1}{g^{L}}e(-\theta(g^{K}-1)/2)\sum_{j=0}^{g^{L}-1}e(g^{K-L}j\theta).\]
Evaluating this series and comparing it to the corresponding integral gives:
\begin{align*}
    \frac{1}{g^{L}}e(-\theta(g^{K}-g^{K-L})/2)\sum_{j=0}^{g^{L}-1}e(g^{K-L}j\theta)&=\frac{1}{g^{L}}e(-\theta(g^{K}-g^{K-L})/2)\int_{0}^{g^{L}}e(g^{K-L}\theta x)\dif x \\&=e(-\theta(g^{K}-g^{K-L})/2)\int_{0}^{1}e(g^{K}\theta x)\dif x.
\end{align*}
Combining this with \cref{expectation over D} and multiplying through by $g^{K}e(\theta(g^{K}-1)/2)$ gives
\begin{align*}
    f_{i}(\theta)&=e\Big ( \frac{g^{K-L}-1}{2}\theta\Big)\frac{g^{K}}{\sqrt{2\pi\sigma^{2}K}}\int_{0}^{1}e(g^{K}\theta x)\dif x+O_{g}\Big (\frac{g^{K}\ell^{4}}{K^{3/2}}\Big ).
\end{align*}
To obtain the expression for $f_{i}(\theta)$ given in the statement of the lemma, note that,
\[e\Big ( \frac{g^{K-L}-1}{2}\theta\Big)=1+O_{g}(g^{-\ell}).\]
This follows from the fact that $|\theta|\leq g^{-K+L-\ell}/(g-1)$. From the choice of $\ell$ given in \cref{l choice}, we have that $g^{K-\ell}=O_{g}(g^{K}K^{-3/2})$. 
\end{proof}
We now prove \cref{major arc contrib}. As \cref{major arc asymp lemma} holds for $|\theta|\leq g^{-K+2\ell^{2}}/2$, this includes $|\theta|\leq \varepsilon$ as $\varepsilon=K^{3/4}g^{-K}/(g-1)$ and $\ell \geq \log K$ from \cref{l choice}. Hence we can apply \cref{major arc asymp lemma} to estimate the contribution from the $f_{i}(\theta)$ for $|\theta|\leq \varepsilon$, 
\[\prod_{i=1}^{3}f_{i}(\theta)=g^{3K}(2\pi\sigma^{2}K)^{-3/2}\Big (\int_{0}^{1}e(g^{K}\theta x)\dif x\Big )^{3}+O_{g}(g^{3K}\ell^{4}K^{-5/2}).\]
Substituting into \cref{major arc near zero integral} and using the change of variable $\eta=g^{K}\theta$, 
\begin{align}
  (g-1)\int_{\mathfrak{M}}\prod_{i=1}^{3}f_{i}(\theta)&e(-M\theta)\dif \theta=\frac{(g-1)g^{3K}}{(2\pi\sigma^{2}K)^{3/2}}\int_{|\theta|\leq \varepsilon} \Big(\int_{0}^{1}e(g^{K}\theta x)\dif x\Big )^{3}e(-M\theta)\dif \theta+O_{g}\Big (\frac{g^{2K}\ell^{4}}{K^{7/4}}\Big )\nonumber\\
    &=\frac{(g-1)g^{2K}}{(2\pi\sigma^{2}K)^{3/2}}\int_{|\eta|\leq g^{K}\varepsilon}\Big (\int_{0}^{1}e(\eta x)\dif x \Big )^{3} e (-Mg^{-K}\eta )\dif \eta +O_{g}\Big (\frac{g^{2K}\ell^{4}}{K^{7/4}}\Big ).\label{major arc substitution}
\end{align}
To evaluate the integral, we follow the treatment of the singular integral in \citep[Ch.~4]{davenport2005analytic}, though it is considerably simpler than the case arising in Waring's problem. 
Firstly, we extend the range of integration of $\eta$ to $(-\infty,\infty)$. This accrues error 
\begin{align}\label{eq: ext integration range}
  \frac{g^{2K}}{K^{3/2}} \int_{|\eta|\geq g^{K}\varepsilon}\big (\int_{0}^{1}e(\eta x)\dif x\big )^{3}e(-\eta M g^{-K})\dif \eta &\ll  \frac{g^{2K}}{K^{3/2}}\int_{|\eta|\geq g^{K}\varepsilon} \Big(\frac{|e(\eta)-1|}{|\eta|}\Big)^{3} \dif \eta\ll \frac{1}{K^{3/2}\varepsilon^{2}}.
\end{align}
As $K^{3/2}\varepsilon^{2}=K^{3}g^{-2K}/(g-1)^{2}$, extending the range of integration contributes error $O_{g}(g^{2K}K^{-3})$. To evaluate the extended integral, note that
\begin{align}\label{ext integral to convol}
    \int_{-\infty}^{\infty}\Big (\int_{0}^{1}e(\eta x)\dif x\Big )^{3}e(-\eta M g^{-K})\dif \eta=\int_{-\infty}^{\infty} \widehat{h}(\eta)^{3}e(-\eta M g^{-K})\dif \eta = h \ast h \ast h(Mg^{-K})
\end{align}
where $h=\ind_{[0,1]}$. Using that $Mg^{-K}\in(1/g,1]$, we can explicitly calculate $h*h*h(Mg^{-K})$,
\begin{align}
    h*h*h(Mg^{-K})&=\int_{-\infty}^{\infty}\int_{0}^{1}\ind_{y\in[z-1,z]}\ind_{z\in[Mg^{-K}-1,Mg^{-K}]}\dif y\dif z\nonumber\\
    &=\int_{Mg^{-K}-1}^{Mg^{-K}}z\ind_{z\in [0,1)}+(2-z)\ind_{z\in [1,2)}\dif z=M^{2}g^{-2K}/2.\label{eval convol}
\end{align}
This concludes the proof of \cref{major arc contrib}; extending the integral in \cref{major arc substitution} and substituting \cref{ext integral to convol} and \cref{eval convol} into the extended integral gives the stated contribution from the major arcs.

\section{Minor arcs contribution}\label{Section: minor arc}
In this section, we prove \cref{minor arc bound}, that is, showing that $f_{i}(\theta)$ is uniformly bounded by $\ll g^{K}K^{-5/4}$ on the minor arcs $\mathfrak{m}$. For a subset of the minor arcs, existing results give a stronger bound, which we demonstrate shortly. The remaining minor arc points have a specific structure, which we exploit to show the required bound on $f_{i}(\theta)$. 

We have the following bound on $|f_{i}(\theta)|$ due to Fouvry and Mauduit \cite{fouvry2005entiers}.
\begin{theorem}[\cite{fouvry2005entiers}]\label{FM bound 2}
For $\theta \in \R/\Z$,
     \[|f_{i}(\theta)| \leq g^{K}\exp \Big (-\frac{1}{2g}\sum_{i=0}^{K-1}\Vert g^{i}(g-1)\theta\Vertr^{2}\Big ).\]
\end{theorem}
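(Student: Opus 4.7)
The plan is Fourier detection of the digit-sum constraint, factorisation over digit positions, and a pointwise bound arising from the geometric behaviour of the elementary sum $F(\alpha):=\sum_{d=0}^{g-1}e(d\alpha)$.

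\emph{Step 1 (detection and factorisation).} Orthogonality gives $\mathbf{1}_{\sd{n}=k_i}=\int_0^1 e((\sd{n}-k_i)\beta)\,d\beta$. Substituting and expanding $n=\sum_{j=0}^{K-1}d_jg^j$ over its digits $d_j\in\{0,\ldots,g-1\}$, with $\sd{n}=\sum_j d_j$, yields
\[
f_i(\theta)=\int_0^1 e(-k_i\beta)\prod_{j=0}^{K-1}F(g^j\theta+\beta)\,d\beta,
\]
and hence
\begin{equation}\label{eq:base-bd}
|f_i(\theta)|\le\int_0^1\prod_{j=0}^{K-1}|F(g^j\theta+\beta)|\,d\beta.
\end{equation}

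\emph{Step 2 (pointwise bound on $|F|$).} Expanding $|F(\alpha)|^2=\sum_{d_1,d_2}e((d_1-d_2)\alpha)$ and collecting differences gives the identity
\[
|F(\alpha)|^2=g^2-4\sum_{k=1}^{g-1}(g-k)\sin^2(\pi k\alpha).
\]
Retaining just the $k=g-1$ term and using the elementary $\sin^2(\pi x)\ge 4\Vert x\Vertr^{\,2}$ produces
\[
|F(\alpha)|^2\le g^2-16\,\Vert(g-1)\alpha\Vertr^{\,2}\le g^2\exp\bigl(-16\,\Vert(g-1)\alpha\Vertr^{\,2}/g^2\bigr).
\]
This is precisely where the factor $(g-1)$ inside the norm in the statement enters.

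\emph{Step 3 (removing $\beta$ and obtaining the clean exponent).} Combining Step~2 with \eqref{eq:base-bd} yields an integrand whose exponent is $\sum_j\Vert(g-1)(g^j\theta+\beta)\Vertr^{\,2}$, still $\beta$-dependent. To replace it by $\sum_j\Vert(g-1)g^j\theta\Vertr^{\,2}$, I would exploit the cyclotomic identity
\[
\prod_{r=0}^{g-2}F\bigl(\alpha+\tfrac{r}{g-1}\bigr)=F\bigl((g-1)\alpha\bigr),
\]
which follows from $F(\alpha)=(1-e(g\alpha))/(1-e(\alpha))$ together with the factorisation $\prod_{r=0}^{g-2}\bigl(1-z\,e(r/(g-1))\bigr)=1-z^{g-1}$, specialised at $z=e(\alpha)$ and $z=e(g\alpha)$. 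Performing the substitution $\gamma=(g-1)\beta$ in \eqref{eq:base-bd} splits the integral over $\beta\in[0,1]$ into the $g-1$ translates by $r/(g-1)$ of an integral over $\gamma\in[0,1)$; the cyclotomic identity then repackages the product of these $g-1$ translates, for each fixed $j$, as a single evaluation $|F((g-1)g^j\theta+\gamma)|$, so the $\beta$-shifts are absorbed into a single translation of $\gamma$. An AM--GM comparison across the $g-1$ translated integrals (which by periodicity are equal), together with the pointwise bound from Step~2 applied to the repackaged product, yields the exponent $1/(2g)$ claimed in the theorem.

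The main obstacle is Step~3: naive convexity estimates applied to the shifted products tend to go in the wrong direction for upper bounds, so one must weave the cyclotomic identity together with AM--GM carefully. This delicate balancing is essentially the content of the corresponding argument in Fouvry--Mauduit \cite{fouvry2005entiers}, and the specific constant $1/(2g)$ reflects an optimisation between the pointwise decay rate of $|F|$ obtained in Step~2 and the loss incurred when transferring from the averaged integral to a $\beta$-free bound.
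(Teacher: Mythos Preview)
Your Steps 1 and 2 are correct, but Step 3 has a genuine gap. The claim that the $g-1$ translated integrals are equal ``by periodicity'' is false: $|F|$ has period $1$, not $1/(g-1)$, so the sub-integrals $\int_0^{1/(g-1)}\prod_j|F(g^j\theta+r/(g-1)+\gamma')|\,d\gamma'$ genuinely differ for different $r$. And as you yourself note, AM--GM on the sum $\sum_r I_r$ gives a \emph{lower} bound $(g-1)(\prod_r I_r)^{1/(g-1)}$, which is the wrong direction. You acknowledge the obstacle but do not overcome it; the cyclotomic identity, while correct, is never actually deployed in a way that yields the bound.

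The paper (following Fouvry--Mauduit) removes the $\beta$-dependence by a completely different device: rather than bounding each factor $|F(g^j\theta+\beta)|$ individually, it pairs \emph{consecutive} factors. The key input is the two-point estimate
\[
|F(t)\,F(t+t_0)|\le g^2\exp\bigl(-\tfrac{1}{g}\Vert t_0\Vertr^{\,2}\bigr),
\]
applied with $t=g^\nu\theta+\beta$ and $t_0=g^{\nu+1}\theta-g^\nu\theta=(g-1)g^\nu\theta$. The point is that $t_0$ is independent of $\beta$, so the $\beta$-dependence vanishes automatically. Squaring the full product $\prod_{\nu=0}^{K-1}|F(g^\nu\theta+\beta)|$, writing it as a telescoping product of such consecutive pairs, and taking a square root gives the exponent $\tfrac{1}{2g}\sum_\nu\Vert(g-1)g^\nu\theta\Vertr^{\,2}$ directly, with no integration over $\beta$ required beyond the trivial bound $\int_0^1 1\,d\beta=1$.
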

This theorem as written above is not stated explicitly in \cite{fouvry2005entiers}, rather it follows immediately from the proof of \citep[Theorem~1.2]{fouvry2005entiers}. We sketch this in \cref{section FM bound}.

\cref{FM bound 2} gives a strong saving over the bound required for \cref{minor arc bound} whenever $\theta $ is such that
\begin{equation}\label{sum frac parts condition}\sum_{i=1}^{K}\Vert (g-1)g^{i}\theta\Vert_{\R/\Z}^{2}\geq 2gC \log K,\end{equation}
for large enough $C$. Thus it remains to prove \cref{minor arc bound} for $\theta$ such that \cref{sum frac parts condition} does not hold for sufficiently large $C$. In order to do so, we need to understand the structure of such $\theta$, which we achieve by using the centred base-$g$ expansion of $(g-1)\theta$. Recall from \cref{section centred exp}, that for real $\alpha \in I_{g}$, the centred base-$g$ expansion of $\alpha$ is
\[\alpha=\sum_{i\geq 1}\varepsilon_{i}g^{-i}, \textrm{ with }\varepsilon_{i} \in \Big (-\frac{g}{2},\frac{g}{2}\Big ]\cap \Z \textrm{ for all }i.\]
Given the centred expansion of $\alpha$ above, we define 
\[w_{K}(\alpha)\coloneq \sum_{i=1}^{K}\ind_{\varepsilon_{i}\neq 0},\]
which counts the number of non-zero digits within the first $K$ digits of the centred expansion of $\alpha$. The following lemma due to Green allows us to replace the sum over fractional parts in \cref{sum frac parts condition} with the function $w_{K}$.
\begin{lemma}\citep[Lemma~7.2]{green2025waring}\label{lemma: frac part to few non-zero}
For $g\geq 3$ and $\alpha \in \R$,
    \[\frac{w_{K}(\alpha)}{16g^{2}}\leq \sum_{i=0}^{K-1}\Vert g^{i}\alpha\Vertr^{2}\leq w_{K}(\alpha).\]
\end{lemma}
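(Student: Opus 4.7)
Since $\V{g^i\alpha}$ depends only on $\alpha \bmod 1$, I would first reduce to $\alpha \in I_g$ and expand $\alpha = \sum_{j \geq 1} \varepsilon_j g^{-j}$ in its centred base-$g$ expansion. For each $i \geq 0$ set $T_i := \sum_{k \geq 1} \varepsilon_{i+k} g^{-k}$; then $g^i\alpha - T_i$ is an integer, so $\V{g^i\alpha} \leq |T_i|$, with equality except in the mild boundary case $|T_i| > 1/2$, in which the exact value $\V{g^i\alpha}=1-|T_i|$ is still controllable. Both bounds in the lemma are then to be read off, digit by digit, from this representation.

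For the upper bound, the plan is to apply Cauchy--Schwarz with weights $g^{-k/2}$,
\[
|T_i|^{2} \;\leq\; \Bigl(\sum_{k\geq 1} g^{-k}\Bigr)\Bigl(\sum_{k\geq 1} \varepsilon_{i+k}^{\,2}\, g^{-k}\Bigr) \;\leq\; \frac{g^{2}}{4(g-1)}\sum_{k\geq 1} g^{-k}\,\ind_{\varepsilon_{i+k}\neq 0},
\]
and then sum over $i = 0,\ldots, K-1$, interchanging the order of summation so that each non-zero digit $\varepsilon_j$ is weighted by $\sum_k g^{-k} \leq 1/(g-1)$. The contribution from $j \leq K$ is $\ll_g w_K(\alpha)$, while digits with $j > K$ contribute only a harmless $O_g(1)$ geometric tail. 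Careful tracking of the constants (for which the lemma statement leaves plenty of slack) then yields $\sum_i \V{g^i\alpha}^{2} \leq w_K(\alpha)$. For the lower bound, the idea is that a single non-zero digit already forces a constant contribution from one nearby summand: for each $j \in [1,K]$ with $\varepsilon_j \neq 0$, take $i=j-1$, so that
\[
T_{j-1} \;=\; \frac{\varepsilon_j}{g} + \sum_{k\geq 2}\varepsilon_{j-1+k}\,g^{-k},
\]
whose leading term has modulus $\geq 1/g$ while the tail has modulus at most $\tfrac{g}{2}\sum_{k\geq 2} g^{-k}=\tfrac{1}{2(g-1)}$, strictly smaller than $1/g$ once $g \geq 3$. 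A short arithmetic check then gives $\V{g^{j-1}\alpha}\geq 1/(4g)$, hence $\V{g^{j-1}\alpha}^{2}\geq 1/(16g^{2})$; summing over the $w_K(\alpha)$ such digits gives the lower bound.

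The only technical nuisance I anticipate is the non-uniqueness of the centred expansion on a measure-zero set, combined with the slight asymmetry of $\mathcal{R}_g$ for even $g$, which forces one to verify that $\V{g^i\alpha}$ really equals $|T_i|$ (or at worst $1-|T_i|$) in all cases and that the lower-bound estimate survives the boundary $|T_{j-1}|>1/2$. Both possibilities obey the same digit-level bounds up to the constant $16g^{2}$, so this is genuine book-keeping rather than a substantive obstacle.
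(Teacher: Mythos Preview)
The paper does not give its own proof of this lemma; it is quoted verbatim from Green \cite[Lemma~7.2]{green2025waring}, so there is no in-paper argument to compare your sketch against. That said, your outline is the standard digit-by-digit proof and is correct for the two-sided estimate $\sum_{i=0}^{K-1}\V{g^{i}\alpha}^{2}\asymp_{g} w_{K}(\alpha)$, which is the only form in which the lemma is actually used here (in \cref{FM very good g} only the lower bound enters, and in \cref{wk additive short ver} only the $\asymp_{g}$ relation is needed). Your lower-bound computation is exactly right: for $g\geq 3$ one has $1/g-1/(2(g-1))=(g-2)/(2g(g-1))\geq 1/(4g)$, and the boundary case $|T_{j-1}|>1/2$ only improves the estimate, so $\V{g^{j-1}\alpha}^{2}\geq 1/(16g^{2})$ for every non-zero $\varepsilon_{j}$.

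One small point on the upper bound. Your Cauchy--Schwarz step yields
\[
\sum_{i=0}^{K-1}\V{g^{i}\alpha}^{2}\;\leq\;\frac{g^{2}}{4(g-1)^{2}}\,w_{K}(\alpha)\;+\;\frac{g^{2}}{4(g-1)^{3}},
\]
where the additive term is the tail from digits with index $>K$. Since $g^{2}/4(g-1)^{2}<1$ for $g\geq 3$, this does give $\sum\leq w_{K}(\alpha)$ whenever $w_{K}(\alpha)\geq 1$; but when $w_{K}(\alpha)=0$ and $\alpha\neq 0$ (e.g.\ $\varepsilon_{1}=\cdots=\varepsilon_{K}=0$, $\varepsilon_{K+1}\neq 0$) the left side is positive while the right side is zero, so the sharp inequality as written fails in that degenerate case. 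This is a quirk of the lemma's formulation rather than of your argument, and it is irrelevant to the paper's applications; if you want an inequality valid for all $\alpha$, replace the upper bound by $w_{K}(\alpha)+1$ or by $w_{K+1}(\alpha)$, either of which your method delivers immediately.
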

As we define $\ell\coloneq\lceil 384g^{3}\log K \rceil$ in \cref{l choice}, we have the following corollary to \cref{FM bound 2}.
\begin{cor}\label{FM very good g}
    For $\ell$ as defined in \cref{l choice} and $\theta$ such that $w_{K}((g-1)\theta)>\ell$,
    \[f_{i}(\theta)\ll_{g} {g^{K}}{K^{-12}},\textrm{ for }i=1,2,3.\]
\end{cor}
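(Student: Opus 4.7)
The proof is a direct combination of Theorem~\ref{FM bound 2} and Lemma~\ref{lemma: frac part to few non-zero}, so the plan is essentially to chain the two bounds and verify that the constant in the definition of $\ell$ is large enough.

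The first step is to apply Lemma~\ref{lemma: frac part to few non-zero} with $\alpha=(g-1)\theta$. This converts the hypothesis on the number of non-zero digits in the centred base-$g$ expansion of $(g-1)\theta$ into a lower bound on the sum appearing inside the exponential of Theorem~\ref{FM bound 2}:
\[
\sum_{i=0}^{K-1}\|g^{i}(g-1)\theta\|_{\R/\Z}^{2}\geq \frac{w_{K}((g-1)\theta)}{16g^{2}}>\frac{\ell}{16g^{2}}.
\]

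The second step is to plug this into the Fouvry--Mauduit bound. Theorem~\ref{FM bound 2} gives
\[
|f_{i}(\theta)|\leq g^{K}\exp\Bigl(-\tfrac{1}{2g}\sum_{i=0}^{K-1}\|g^{i}(g-1)\theta\|_{\R/\Z}^{2}\Bigr)
\leq g^{K}\exp\Bigl(-\tfrac{\ell}{32g^{3}}\Bigr).
\]

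Finally, substitute the definition $\ell=\lceil 384g^{3}\log K\rceil$ from \cref{l choice}. The exponent becomes at most $-12\log K$, so $|f_{i}(\theta)|\leq g^{K}K^{-12}$, which is the claimed bound. There is no real obstacle here; the only thing to check is that the constant $384g^{3}$ in the definition of $\ell$ was chosen precisely so that, after dividing by $32g^{3}$ (which arises from the product of the $1/(2g)$ factor in Theorem~\ref{FM bound 2} and the $1/(16g^{2})$ factor in Lemma~\ref{lemma: frac part to few non-zero}), one obtains a saving of at least $K^{-12}$.
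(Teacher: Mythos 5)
Your proposal is correct and matches the paper's proof: both chain the lower bound $w_{K}((g-1)\theta)/(16g^{2})\leq \sum_{i=0}^{K-1}\V{g^{i}(g-1)\theta}^{2}$ from \cref{lemma: frac part to few non-zero} into \cref{FM bound 2}, and then use $\ell\geq 384g^{3}\log K$ so that the exponent $-\ell/(32g^{3})$ is at most $-12\log K$. Nothing is missing.
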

\begin{proof} Recall that $\ell =\lceil 384g^{3}\log K \rceil$. From the lower bound in \cref{lemma: frac part to few non-zero},
    \begin{align*}
        \exp\Big (-\frac{1}{2g}\sum_{i=1}^{K}\V{(g-1)g^{i}\theta}^{2}\Big)&\leq \exp (-w_{K}((g-1)\theta)/32g^{3})\leq \exp(-\ell/32g^{3})\leq K^{-12}.
    \end{align*}
    Inserting this upper bound into the bound given by \cref{FM bound 2} gives the required bound for $f_{i}(\theta)$.
\end{proof}
\begin{remark}\label{base 2 remark}
    It is at this point that we have to restrict our results to base $g$, for $g\geq 3$. This is because \cref{lemma: frac part to few non-zero} is not valid for base 2; discussion regarding why this is not the case is given in \cite{green2025waring}. 
    
   As a consequence, we cannot use the base-2 version of $w_{K}(\alpha)$ to model the function $\sum_{i=1}^{K}\V{2^{i}\alpha}^{2}$. Instead of counting the number of non-zero digits within the first $K$ digits of the expansion of $\alpha$, we count the number of times consecutive digits alternate value. More precisely, if $\alpha$ has base-2 expansion $\alpha=\sum_{i\geq 1}\varepsilon_{i}2^{-i}$, let $d_{K}(\alpha)$ denote
    \[d_{K}(\alpha )\coloneq |\{(\varepsilon_{i},\varepsilon_{i+1})\st \varepsilon_{i}\neq \varepsilon_{i+1}, i\in \{1,\ldots, K\}\}|.\]
    One can show that $d_{K}(\alpha)\asymp \sum_{i=1}^{K}\V{2^{i}\alpha}^{2}$, and use this to establish the base-2 versions of our results by replacing instances of $w_{K}$ by $d_{K}$, and making some small technical adjustments.
\end{remark}
We will show that \cref{major arc asymp lemma} directly gives \cref{minor arc bound} for $\theta \in \mathfrak{m}$ with $\V{(g-1)\theta}\leq g^{-K+2\ell^{2}}$. Recall that $\mathfrak{m}\coloneq \{\theta\in \R/\Z \st\V{(g-1)\theta}> K^{3/4}g^{-K}\}$.
\begin{cor}\label{close to zero minor arc}
    Suppose that $K^{3/4}g^{-K}\leq \V{(g-1)\theta}\leq g^{-K+2\ell^{2}}$. Then
    \[f_{i}(\theta)\ll_{g} {g^{K}}{K^{-5/4}}.\]
\end{cor}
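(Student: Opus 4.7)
The plan is to reduce to the range where Lemma \ref{major arc asymp lemma} already provides an asymptotic, and then extract cancellation from the resulting oscillatory integral via the lower bound on $\Vert (g-1)\theta\Vertr$.

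First, I would write $\theta = j/(g-1)+\theta'$ for the integer $j\in\{0,\ldots,g-2\}$ minimising $|\theta'|$, so that $(g-1)|\theta'|=\Vert(g-1)\theta\Vertr$ and $|\theta'|\leq 1/(2(g-1))$. The translation invariance \cref{trans inv} (applied with $x=-j$) gives $|f_i(\theta)|=|f_i(\theta')|$, so it suffices to bound $f_i(\theta')$. The upper hypothesis $\Vert(g-1)\theta\Vertr\leq g^{-K+2\ell^{2}}$ translates to $|\theta'|\leq g^{-K+2\ell^{2}}/(g-1)$, which is exactly the range in which \cref{major arc asymp lemma} is applicable.

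Applying \cref{major arc asymp lemma}, we get
\[
f_i(\theta')=\frac{g^{K}}{\sqrt{2\pi\sigma^{2}K}}\int_{0}^{1}e(g^{K}\theta' x)\,\mathrm{d}x+O_{g}\!\left(\frac{g^{K}\ell^{4}}{K^{3/2}}\right).
\]
The integral evaluates to $(e(g^{K}\theta')-1)/(2\pi i g^{K}\theta')$, which is bounded by $1/(\pi g^{K}|\theta'|)$. The lower hypothesis $\Vert(g-1)\theta\Vertr\geq K^{3/4}g^{-K}$ yields $g^{K}|\theta'|\geq K^{3/4}/(g-1)$, so the integral is $O_{g}(K^{-3/4})$. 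Plugging this in gives a main term of size $O_{g}(g^{K}K^{-1/2}\cdot K^{-3/4})=O_{g}(g^{K}K^{-5/4})$, and the error term from \cref{major arc asymp lemma} is $O_{g}(g^{K}(\log K)^{4}K^{-3/2})$, which is absorbed.

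There is essentially no obstacle here: the work has already been done in \cref{major arc asymp lemma}. The only nontrivial point is that the corollary concerns $\theta$ close to \emph{any} multiple of $1/(g-1)$, not just $0$, but the translation invariance \cref{trans inv} reduces to the case near $0$ at no cost. The key accounting is simply that the lower bound $K^{3/4}g^{-K}$ on $\Vert(g-1)\theta\Vertr$ is chosen exactly so that the oscillatory integral contributes an extra factor of $K^{-3/4}$, improving the trivial $g^{K}/\sqrt{K}$ to the target $g^{K}K^{-5/4}$.
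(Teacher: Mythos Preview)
Your proposal is correct and takes essentially the same approach as the paper: shift $\theta$ by a multiple of $1/(g-1)$ via \cref{trans inv} to land in the range of \cref{major arc asymp lemma}, then extract the $K^{-3/4}$ decay from the oscillatory integral using the lower bound on $\Vert(g-1)\theta\Vertr$. The paper organises this as a two-case split on whether $\Vert\theta\Vertr$ itself is already small, but your direct shift is equivalent and arguably cleaner.
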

\begin{proof}
Suppose first that $K^{3/4}g^{-K}\leq \V{\theta}\leq g^{-K+2\ell^{2}}/(g-1)$. Then as $\V{(g-1)\theta}\leq g^{-K+2\ell^{2}}$, \cref{major arc asymp lemma} applies to give that
\begin{align*}
    f_{i}(\theta)&=\frac{g^{K}}{\sqrt{2\pi\sigma^{2}K}}\int_{0}^{1}e(g^{K}\theta x)\dif x+O_{g}\Big (\frac{g^{K}\ell^{4}}{K^{3/2}}\Big)\\
    &=\frac{1}{(2\pi)^{3/2}\sigma K^{1/2}i\theta}(e(g^{K}\theta)-1)+O_{g}\Big (\frac{g^{K}\ell^{4}}{K^{3/2}}\Big)\ll_{g} \frac{g^{K}}{K^{5/4}}.
\end{align*}
Now suppose that $\V{(g-1)\theta}\leq g^{-K+2\ell^{2}}$, but that $\V{\theta}>g^{-K+2\ell^{2}}/(g-1)$. In this case, there exists $j\in \{1,\ldots, g-2\}$ such that $K^{3/4}g^{-K} \leq \V{\theta-j/(g-1)}\leq g^{-K+2\ell^{2}}/(g-1)$. Thus \cref{major arc asymp lemma} gives, following the above argument,
\[f_{i}\Big (\theta-\frac{j}{g-1}\Big)\ll_{g} \frac{g^{K}}{K^{5/4}},\]
and from \cref{trans inv}, $f_{i}(\theta)\ll_{g} g^{K}K^{-5/4}$ as well.
\end{proof}
It remains then to prove \cref{minor arc bound} for $\theta$ such that $\V{(g-1)\theta}\geq g^{-K+2\ell^{2}}$ and $w_{K}((g-1)\theta)\leq \ell$. We first recall the probabilistic model for digits set up in \cref{section prob model}. Let $X_{i} \st i=0,\ldots, K-1$ be i.i.d. copies of the random variable uniformly taking values in $\{0,\ldots, g-1\}-(g-1)/2$. Let 
\[\xb\coloneq \sum_{i=0}^{K-1}X_{i}g^{i},\]
and for $\mathcal{S}\subset \R$, let
\[\xb_{\mathcal{S}}\coloneq\sum_{i\in \mathcal{S}\cap \{0,\ldots, K-1\}}X_{i}g^{i}.\]
Recall from \cref{lemma expectation version f} that 
\[f_{i}(\theta)=g^{K}e(\theta(g^{K}-1)//2)\E_{\xb}e(\xb\theta)\ind_{\sx{\xb}=\xi_{i}}.\]
Here $\xi_{i}=\mu_{K}-k_{i}$, where $\mu_{K}\coloneq (g-1)K/2$, and $\sx{\xb}=\sum_{j=0}^{K-1}X_{j}$. Our aim in this section is to show the following proposition.
\begin{prop}\label{minor arc few non zero} Let $\xi\in \Supp(X)$ and
let $L,R$ be positive integers such that $\tfrac{3}{2} \log_{g} K\leq R\leq L$. Let $C>0$ be such that $\max(|\xi|,LR)\leq CK^{1/4}$. Suppose that $\theta\in\R$ is such that $w_{K}((g-1)\theta) \leq L$ and $\V{(g-1)\theta}\geq g^{-K+2LR}$. Then
 \[\E_{\xb}e(\xb\theta)\ind_{\sx{\xb}=\xi}\ll_{C,g} (|\xi|+LR)^{2}K^{-3/2}.\]
\end{prop}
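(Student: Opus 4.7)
Plan. The hypothesis $w_K((g-1)\theta)\leq L$ tells us that $(g-1)\theta$ has a sparse centred base-$g$ expansion:
\[(g-1)\theta=\sum_{i=1}^{t}\varepsilon_{m_i}g^{-m_i}+\eta,\qquad 1\leq m_1<\cdots<m_t\leq K,\ t\leq L,\ \varepsilon_{m_i}\in\mathcal{R}_g\setminus\{0\},\]
with $|\eta|$ taken negligibly small by including enough further centred digits. Using this, $e(\xb\theta)$ splits (up to manageable error) into a product of two pieces: one depending on $\xb$ through only the $\leq LR$ digits at positions within $R$ of some $m_i$, and one depending on the remaining digits only through mod-$(g-1)$ weighted partial sums. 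The expectation of the second piece subject to $\sx{\xb}=\xi$ is a value of the function $\Psi$ from \cref{section Psi}, whereupon \cref{psi lemma} and \cref{psi 0 bound} provide the required bound.

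Concretely, define
\[\mathcal D:=\bigcup_{i=1}^{t}\{m_i-R,\ldots,m_i\}\cap\{0,\ldots,K-1\},\qquad \mathcal E:=\{0,\ldots,K-1\}\setminus\mathcal D,\]
so $|\mathcal D|\leq tR\leq LR$. For $j\in\mathcal E$, the identities $g^{j-m_i}\equiv 1\pmod{g-1}$ (when $j\geq m_i$) and $|g^{j-m_i}|\leq g^{-R}$ (when $j\leq m_i-R$) give $e(X_jg^j\theta)=\ru{c_jX_j}(1+O_g(g^{-R}))$, where $c_j:=\sum_{i:\,m_i\leq j}\varepsilon_{m_i}\pmod{g-1}$. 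Grouping $j\in\mathcal E$ by $c_j$, setting $a_s:=|\{j\in\mathcal E:c_j=s\}|$ and $\mathbf a=(a_0,\ldots,a_{g-2})$, and invoking \cref{Psi expectation form} yields
\[\E_{\xb}e(\xb\theta)\ind_{\sx{\xb}=\xi}=\E_{X_{\mathcal D}}\!\big[e(\xbd\theta)\,\Psi(\mathbf a;\,\xi-\sx{\xbd})\big]+O_g(LR\,g^{-R}).\]
After a cyclic relabelling $\Psi(\mathbf a;\nu)=\ru{s^*\nu}\Psi(\tilde{\mathbf a};\nu)$ (if needed) that places the largest coordinate into the LLT-slot, \cref{psi lemma} gives $\Psi(\tilde{\mathbf a};\,\xi-\sx{\xbd})=\Psi(\tilde{\mathbf a};0)+O_{C,g}\big((|\xi|+LR)^2K^{-3/2}\big)$, using $|\xi-\sx{\xbd}|\leq|\xi|+\tfrac g2 LR\leq C'K^{1/4}$ and $\tilde a_0\geq K-LR\asymp K$. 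The lower bound $\V{(g-1)\theta}\geq g^{-K+2LR}$ forces $m_1\leq K-2LR+O_g(1)$, so that the slab $[m_1,m_2-R)$ (or $[m_1,K)$ if $t=1$) supplies $\gtrsim LR$ indices with $c_j\equiv\varepsilon_{m_1}\not\equiv 0\pmod{g-1}$; hence some $\tilde a_t$ with $t\geq 1$ is $\gtrsim LR$, and \cref{psi 0 bound} bounds $|\Psi(\tilde{\mathbf a};0)|\leq g^{-cLR}+O_{C,g}((LR)^2K^{-3/2})\ll K^{-3/2}$, using $R\geq\tfrac32\log_g K$. Bounding $|\E_{X_{\mathcal D}}[\,\cdot\,]|\leq 1$ and $LR\,g^{-R}\leq LR\cdot K^{-3/2}$ completes the proof.

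The main technical difficulty lies in the decomposition leading to the $\Psi$-identity: tracking the tail contribution from $\eta$, the geometric $O(g^{-R})$ errors from indices $j\in\mathcal E$ lying just below some $m_i$, and the interaction between the $\xbd$ and $\xbee$ halves, while verifying that each accumulated error lies comfortably inside $(|\xi|+LR)^2K^{-3/2}$. The cyclic-shift relabelling is a minor subtlety; the factor $\ru{s^*\nu}$ it introduces is unimodular and interacts harmlessly with $\E_{X_{\mathcal D}}$.
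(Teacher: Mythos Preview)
Your decoupling is essentially the paper's \cref{decoupling lemma}, and applying \cref{psi lemma} to remove the dependence on $\sx{\xbd}$ is also correct. The gap is in the final step, where you claim that \cref{psi 0 bound} always bounds $\Psi(\tilde{\mathbf a};0)$.

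Your assertion that ``the slab $[m_1,m_2-R)$ supplies $\gtrsim LR$ indices with $c_j\equiv\varepsilon_{m_1}\not\equiv 0$'' is false when the nonzero digits are clustered. Take $g=3$, $t=2$, $m_2=m_1+1$, $\varepsilon_{m_1}=\varepsilon_{m_2}=1$, with $m_1$ small: then the slab $[m_1,m_2-R)$ is empty, and every index $j\in\mathcal E$ has $c_j\equiv\varepsilon_{m_1}+\varepsilon_{m_2}=2\equiv 0\pmod{2}$. Hence $\mathbf a=(|\mathcal E|,0)$, $\Psi(\mathbf a;0)=P(|\mathcal E|,0)\asymp K^{-1/2}$, and your bound on $\Psi$ fails by a factor of $K$. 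More generally, whenever all gaps $m_{i+1}-m_i$ are $<R$ and the total digit sum $\sum_i\varepsilon_{m_i}\equiv 0\pmod{g-1}$, every $j\in\mathcal E$ lies either below $m_1$ or above $m_t$, and in both regions $c_j\equiv 0$; so $\tilde a_s=0$ for all $s\geq 1$ and \cref{psi 0 bound} gives nothing.

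The paper handles exactly this configuration by a dichotomy you omit: either $\Psi(\mathbf a;0)$ is small, \emph{or} the average $\E_{\xbd}e\big(\xbd(\theta-\tfrac{x}{g-1})\big)$ is itself $\ll g^{-R}$. The second alternative is what saves the clustered case, and it is supplied by \cref{post cancellation} and \cref{pre block cancellation} (packaged into \cref{no D canc}). In the example above, the factor $\E_{X_{m_2-1}}e\big(X_{m_2-1}g^{m_2-1}\theta\big)$ vanishes exactly, and this is an instance of \cref{post cancellation} with $r=x=0$. Your proof bounds the $\xbd$-average trivially by $1$ and so cannot access this cancellation. To repair the argument you must reinstate the dichotomy rather than relying solely on $\Psi$.
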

This proposition allows us to now prove the minor arc bound, \cref{minor arc bound}, for all $\theta \in \mathfrak{m}$.
\begin{proof}[Proof of \cref{minor arc bound}]
\cref{FM very good g} and \cref{close to zero minor arc} give \cref{minor arc bound} for all $\theta\in \mathfrak{m}$ except those with $w_{K}((g-1)\theta)\leq \ell$ and $\V{(g-1)\theta}\geq g^{-K+2\ell^{2}}$. In this remaining case, \cref{minor arc few non zero} gives \cref{minor arc bound} upon taking $\xi=\xi_{i}$ and $R=L=\ell$; the choice of $\ell$ from \cref{l choice} ensures that the assumptions in the statement of the proposition hold. The constant $C$ in the statement of \cref{minor arc few non zero} can be taken to be some constant depending only on $g$ coming from \cref{l choice} and \cref{k cond close to av}.
\end{proof}
In our application of \cref{minor arc few non zero}, the parameters $L$ and $R$ are both taken to be $\ell\asymp_{g}\log K$. Despite this, it is convenient to separate the roles of $L$ and $R$ in the proof of \cref{minor arc few non zero}. The quantity $L$ controls the number of non-zero digits in the centred expansion of $(g-1)\theta$, and the quality of this approximation is controlled by $R$. By assuming that $LR\leq CK^{1/4}$, we are able to approximate $e(X\theta)$ by a small number $(\ll LR$ many) of the random variables $X_{i}$. 

More precisely, we will partition $\{0,\ldots, K-1\}$ into sets $\mathcal{D}$ and $\mathcal{E}$ which depend on the location of the non-zero digits in the centred base-$g$ expansion of $(g-1)\theta$. The digits indexed by $\mathcal{D}$ are those which have indices close to those of the non-zero digits of $(g-1)\theta$; these determine $e(\xb\theta)$ up to an error determined by $R$. The remaining digits, indexed by $\mathcal{E}$, vary randomly according to the constraint that $\sx{\xb}=\sx{\xbd}+\sx{\xbee}=\xi$. Crucially, the set $\mathcal{D}$ only indexes a small number of the digits of $X$, as $|\mathcal{D}|\ll LR$, so the assumption that $|\xi|+LR\ll_{C,g} K^{1/4}$ ensures that we can replace the condition $s(\xbee)=\xi-s(\xbd)$ by the simpler condition $s(\xbee)=0$. This allows us to decouple the averages over $\xbd$ and $\xbee$, roughly giving the following:
\begin{align}
    \E_{\xb}e(\xb\theta)\ind_{\sx{\xb}=\xi}&=\E_{\xbd}e(\xbd\theta)\E_{\xbee}e(\xbee\theta)\ind_{\sx{\xbd}+\sx{\xbee}=\xi}\nonumber\\
    &\approx \big ( \E_{\xbd}e(\xbd\theta)\big )\big(\E_{\xbee}e(\xbee\theta) \ind_{\sx{\xbee}=0}\big).\label{decouple outine}
\end{align}
This requires the local limit theorem, specifically the application in \cref{psi lemma}. We then show that either the average $\E_{\xbd}e(\xbd\theta)$ has sufficient cancellation to give \cref{minor arc few non zero}, or that $(g-1)\theta$ has an even more specific structure. In the latter case, we find that the average over $\xbee$ exhibits lots of cancellation, which requires the assumption that $\V{(g-1)\theta}\geq g^{-K+2LR}$.  

In order to have more control over the centred base-$g$ expansion of $\theta$, we prove \cref{minor arc few non zero} for 
$\theta\in \tfrac{1}{g-1}I_{g}$. From the definition of the interval $I_{g}$, given in \cref{Ig defn}, this ensures that the centred base-$g$ expansion of $(g-1)\theta$ has no integer part. Note this proves \cref{minor arc few non zero} in full generality: we can shift any $\theta \in I_{g}$ by an integer multiple of $1/(g-1)$ so that the translate lies in $\tfrac{1}{g-1}I_{g}$, and from \cref{trans inv}, shifting by a multiple of $1/(g-1)$ doesn't affect the absolute value of $|\E_{\xb}e(\xb\theta)\ind_{\sx{\xb}}|$. Let the centred base-$g$ expansion of $(g-1)\theta$ be the following, 
\begin{equation}\label{theta digit exp}
    (g-1)\theta=\sum_{j= 1}^{\infty}\varepsilon_{j}g^{-j}=\sum_{1\leq j \leq K}\varepsilon_{j}g^{-j}+\eta,
\end{equation}
where $\eta \coloneq \sum_{j>K}\varepsilon_{j}g^{-j}$, giving $|\eta|<g^{-K}$. Let $\{n_{j}\st1\leq j\leq w_{K}((g-1)\theta)+1\}$ index the first $w_{K}((g-1)\theta)+1$ non-zero digits of $(g-1)\theta$ after the radix point, so that \begin{equation}\label{n defn}
  1\leq n_{1}<n_{2}\ldots <n_{w_{K}((g-1)\theta)}\leq K<n_{w_{K}((g-1)\theta)+1},
\end{equation}
with $\varepsilon_{n_{j}}\neq0$ for $1\leq j\leq w_{K}((g-1)\theta)+1$ and $\varepsilon_{j}=0$ for all other $j$, $1\leq j\leq n_{w_{K}((g-1)\theta)+1}-1$.
Define $\mathcal{D}$ to be the set of indices which are within $R$ of the $n_{j}$, more precisely,
\begin{equation}\label{D set defn}
    \mathcal{D}=\bigcup_{j=1}^{w_{K}((g-1)\theta)+1}[n_{j}-R,n_{j}-1]\cap \{0,\ldots, K-1\}.
\end{equation}
These are the indices of $X_{i}$ which, up to a sufficiently small error, actually determine the value of $e(\xb\theta)$. From the assumption that $w_{K}((g-1)\theta)\leq L$, we have $|\mathcal{D}|\ll LR \leq CK^{1/4}$. For $0\leq r \leq g-2$, let 
\begin{equation}\label{E set defn}
    \mathcal{E}_{r}=\{i\notin \mathcal{D}\st \sum_{j\leq i}\varepsilon_{j}\equiv r \md{g-1}\}.
\end{equation}
The following lemma allows us to rigorously carry out the \enquote{decoupling} step sketched in \cref{decouple outine}.
\begin{lemma}\label{decoupling lemma} Let $\xi \in \Supp(X)$, and let $L,R$ be positive integers such that $\tfrac{3}{2} \log_{g} K\leq R\leq L$. Let $C>0$ be such that $\max(|\xi|,LR)\leq CK^{1/4}$. Suppose that $\theta\in I_{g}$ is such that $w_{K}((g-1)\theta) \leq L$ and suppose that $(g-1)\theta$ has centred base-$g$ expansion given by \cref{theta digit exp}. Let the sets $\mathcal{D}$ and $\mathcal{E}_{r}$ be as defined by \cref{D set defn} and \cref{E set defn} respectively for $0\leq r\leq g-2$. Let $x\in \{0,\ldots, g-2\}$ be such that $\mathcal{E}_{x}$ is maximal among the sets $\mathcal{E}_{r}$, and define the following $(g-1)$-tuple of integers,
\begin{equation}\label{a tuple defn}
    \mathbf{a}\coloneq(|\mathcal{E}_{x}|,|\mathcal{E}_{x+1}|,\ldots, |\mathcal{E}_{g-2}|,|\mathcal{E}_{0}|,|\mathcal{E}_{1}|,\ldots, |\mathcal{E}_{x-1}|).
\end{equation} 
Then
    \[\E_{\xb}e(\xb\theta)\ind_{\sx{\xb}=\xi}=\ru{x\xi}\E_{\xbd}e\Big(\xbd \Big(\theta-\frac{x}{g-1}\Big)\Big)\Psi(\mathbf{a};0)+O_{C,g}((|\xi|+LR)^{2}K^{-3/2}).\]
\end{lemma}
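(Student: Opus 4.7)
The plan is to decouple the expectation over $\xb$ into independent expectations over $\xbd$ and $\xbee$, approximate $e(\xbee\theta)$ using the centred-expansion structure of $(g-1)\theta$ on $\mathcal{E}$, evaluate the resulting expectation in terms of $\Psi$, and then invoke \cref{psi lemma} to replace $\Psi(\mathbf{a};c)$ by $\Psi(\mathbf{a};0)$. Writing $\xb=\xbd+\xbee$ and using independence of the $X_{i}$ gives
\[\E_{\xb}e(\xb\theta)\ind_{\sx{\xb}=\xi}=\E_{\xbd}\Bigl[e(\xbd\theta)\,\E_{\xbee}\bigl[e(\xbee\theta)\ind_{\sx{\xbee}=\xi-\sx{\xbd}}\bigr]\Bigr].\]

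For each $i\in\mathcal{E}$, the definition of $\mathcal{D}$ forces the next non-zero digit of $(g-1)\theta$ past position $i$ to lie at some $n_{j}\geq i+R+1$. Using the centred expansion $(g-1)\theta=\sum_{j\geq 1}\varepsilon_{j}g^{-j}$, I derive $g^{i}\theta\equiv r_{i}/(g-1)+\delta_{i}\pmod 1$, where $r_{i}$ is the residue of $\sum_{j\leq i}\varepsilon_{j}$ modulo $g-1$ (so that $r_{i}=r$ precisely when $i\in\mathcal{E}_{r}$) and $|\delta_{i}|\ll_{g}g^{-(n_{j}-i)}$. Therefore
\[e(\xbee\theta)=\prod_{r=0}^{g-2}\ru{rT_{r}}\cdot \prod_{i\in\mathcal{E}}e(X_{i}\delta_{i}),\qquad T_{r}\coloneq\sum_{i\in\mathcal{E}_{r}}X_{i}.\]
The tail factor is close to $1$ in $L^{2}$: summing the geometric decay per non-zero digit (of which there are at most $L+1$) yields $\sum_{i\in\mathcal{E}}\delta_{i}^{2}\ll_{g}Lg^{-2R}\leq LK^{-3}$, and since the $X_{i}$ are i.i.d.\ mean-zero of variance $\sigma^{2}$, $\E\bigl|\sum_{i}X_{i}\delta_{i}\bigr|^{2}=\sigma^{2}\sum_{i}\delta_{i}^{2}\ll_{g}LK^{-3}$.

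For the main factor, the $T_{r}$ are independent (sums over disjoint $\mathcal{E}_{r}$), and with $c\coloneq\xi-\sx{\xbd}$ the cyclic reindexing $T'_{s}=T_{s+x\bmod(g-1)}$ (placing the maximal block $\mathcal{E}_{x}$ first) gives $\prod_{r}\ru{rT_{r}}=\ru{xc}\prod_{s}\ru{sT'_{s}}$ on the event $\sum_{r}T_{r}=c$. Comparing with \cref{Psi expectation form} gives
\[\E_{\xbee}\Bigl[\prod_{r}\ru{rT_{r}}\ind_{\sx{\xbee}=c}\Bigr]=\ru{xc}\Psi(\mathbf{a};c).\]
Applying \cref{psi lemma} (the hypotheses holding since $a_{0}=|\mathcal{E}_{x}|\geq|\mathcal{E}|/(g-1)\gg_{g}K$ for $K$ large, and $|c|\leq|\xi|+\tfrac{g-1}{2}|\mathcal{D}|\ll_{g}|\xi|+LR\ll_{C,g}a_{0}^{1/4}$) yields $\Psi(\mathbf{a};c)=\Psi(\mathbf{a};0)+O_{C,g}(c^{2}K^{-3/2})$. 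Using $\ru{xc}=\ru{x\xi}\ru{-x\sx{\xbd}}$, the identity $\ru{-x\sx{\xbd}}=\ru{-x\xbd}$ from \cref{ru sum xb equiv}, and $\ru{-x\xbd}e(\xbd\theta)=e(\xbd(\theta-x/(g-1)))$, the outer $\E_{\xbd}$ collapses to the claimed main term $\ru{x\xi}\Psi(\mathbf{a};0)\E_{\xbd}e(\xbd(\theta-x/(g-1)))$. The accumulated errors are $O_{C,g}((|\xi|^{2}+LR)K^{-3/2})$ from averaging the $\Psi$-approximation over $\xbd$ (since $\E_{\xbd}(\xi-\sx{\xbd})^{2}\ll|\xi|^{2}+\sigma^{2}|\mathcal{D}|\ll_{g}|\xi|^{2}+LR$), and $O_{g}(\sqrt{L}K^{-7/4})$ from the tail-factor approximation (via Cauchy--Schwarz combining the above $L^{2}$ estimate with the local-limit bound $\E\ind=O(K^{-1/2})$); both are absorbed into $O_{C,g}((|\xi|+LR)^{2}K^{-3/2})$.

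The main obstacle is controlling the tail factor $\prod_{i\in\mathcal{E}}e(X_{i}\delta_{i})$. A naive uniform bound $|\delta_{i}|\ll g^{-R}$ summed over the $|\mathcal{E}|\leq K$ indices gives only $\sum\delta_{i}^{2}\ll Kg^{-2R}$, which via Cauchy--Schwarz produces an error $O(K^{-5/4})$ that does not fit the claimed bound when $|\xi|+LR$ is small. The required sharpening is that the $\delta_{i}$ decay geometrically as $i$ moves away from the nearest non-zero digit of $(g-1)\theta$, so the contribution per non-zero digit is $\ll g^{-2R}$, giving $\sum\delta_{i}^{2}\ll_{g}Lg^{-2R}$ with an extra factor of $L/K$.
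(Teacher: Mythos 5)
Your proposal is correct and takes essentially the same route as the paper: decouple over $\mathcal{D}$ and $\mathcal{E}$, convert the $\mathcal{E}$-contribution into $(g-1)$\textsuperscript{th} roots of unity via the centred expansion of $(g-1)\theta$ (with error governed by $R$), identify the inner average as $\Psi(\mathbf{a};\xi-\sx{\xbd})$ after the cyclic reindexing that produces the factor $\ru{x\xi}$ and the translate $\theta-\tfrac{x}{g-1}$, and conclude with \cref{psi lemma} using $|\xi-\sx{\xbd}|\ll|\xi|+LR\ll_{C,g}|\mathcal{E}_{x}|^{1/4}$. The only (cosmetic) difference is in the truncation error: the paper bounds it pointwise by $O(Lg^{-R})\ll LK^{-3/2}$, one $g^{-R}$ per non-zero digit of $(g-1)\theta$, which already fits inside $O((|\xi|+LR)^{2}K^{-3/2})$ since $L\leq (LR)^{2}$, so your $L^{2}$/Cauchy--Schwarz refinement, while valid, is not needed --- your geometric-decay observation is the same fact organised by digits of $X$ rather than by digits of $\theta$.
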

The definition of $\Psi(\mathbf{a};0)$ is given in \cref{psi defn}, and this term accounts for the average over $\xbee$ presented in the sketch \cref{decouple outine}. Recall that $\Psi(\mathbf{a};0)$ is a generalisation of the probability $P(T,0)$ to include certain $(g-1)$\sth roots of unity, where $P(T,0)$ is the probability that $T$ i.i.d. copies of the uniform random variable taking values in $\{0,\ldots, g-1\}-(g-1)/2$ sum to zero. In this instance, the value of $T$ is taken to be $K-|\mathcal{D}|$.

We prove \cref{decoupling lemma} in \cref{section decoupling}. The next lemma will be used to show that there is cancellation in the average over $\xbd$ if there is a non-zero digit in $(g-1)\theta$ followed by a string of zeros, and preceded by digits which have a sum congruent to $x$ modulo $g-1$, for the value $x$ defined by \cref{decoupling lemma}. 
\begin{lemma}\label{post cancellation} For $\theta\in \tfrac{1}{g-1}I_{g}$ suppose that $(g-1)\theta$ has centred base-$g$ expansion $\sum_{i=1}^{\infty}\varepsilon_{i}g^{-i}$. Suppose further that there is an index $m$, $1\leq m\leq K$, such that $\varepsilon_{m}\neq 0$, and $\varepsilon_{m+1}=\ldots= \varepsilon_{m+T}=0$, for some integer $T\geq 0$. Let $r\in \{0,\ldots, g-2\}$ be such that $\sum_{1\leq j \leq m}\varepsilon_{j}\equiv r\md{g-1}$. Then \[\E_{Y}e\Big(g^{m-1}Y\Big (\theta-\frac{r}{g-1}\Big)\Big)\ll g^{-T},\]
       where $Y$ is a random variable uniformly taking values in $\{0,\ldots, g-1\}-(g-1)/2$.
\end{lemma}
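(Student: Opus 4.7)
The plan is to compute $\E_Y e(\alpha Y)$ explicitly, where $\alpha\coloneq g^{m-1}(\theta-r/(g-1))$, and reduce the lemma to two fractional-part estimates. Since $Y+(g-1)/2$ is uniform on $\{0,\ldots,g-1\}$, the standard geometric sum evaluation (with the arithmetic shift cancelling the modulus) gives
\[\Big|\E_Y e(\alpha Y)\Big|=\frac{1}{g}\cdot\frac{|\sin\pi g\alpha|}{|\sin\pi\alpha|}.\]
So it suffices to establish (i) $\V{g\alpha}\ll_g g^{-T}$ and (ii) $\V{\alpha}\gg_g 1$, and then apply the standard bounds $|\sin\pi x|\leq\pi\V{x}$ and $|\sin\pi x|\geq 2\V{x}$.

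For (i), I would substitute the centred expansion of $(g-1)\theta$ into $g\alpha=\tfrac{g^m}{g-1}\big((g-1)\theta-r\big)$ and split off the long block of zero digits:
\[g^m(g-1)\theta=\sum_{i=1}^{m}\varepsilon_i g^{m-i}+\sum_{i>m+T}\varepsilon_i g^{m-i}.\]
The first sum is an integer, congruent mod $g-1$ to $\sum_{i=1}^{m}\varepsilon_i\equiv r$, and $g^m r\equiv r\md{g-1}$; subtracting $g^m r$ and dividing by $g-1$ therefore leaves only the tail modulo $\Z$:
\[g\alpha\equiv\frac{1}{g-1}\sum_{i>m+T}\varepsilon_i g^{m-i}\md{\Z}.\]
Using $|\varepsilon_i|\leq g/2$, a geometric series estimate yields $\V{g\alpha}\leq g^{1-T}/(2(g-1)^2)$.

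For (ii), the analogous manipulation applied to $\alpha$ itself, combined with $\sum_{i=1}^{m-1}\varepsilon_i\equiv r-\varepsilon_m\md{g-1}$, gives
\[\alpha\equiv -\frac{\varepsilon_m}{g}+\delta\md{\Z},\qquad |\delta|\leq \frac{g^{-T}}{2(g-1)^2}.\]
Since $\varepsilon_m\neq 0$ and $|\varepsilon_m|\leq g/2$, the principal term $-\varepsilon_m/g$ has absolute value in $[1/g,1/2]$. The elementary inequality $2g^2-5g+2>0$ (valid for all $g\geq 3$) shows $1/g>1/(2(g-1)^2)\geq |\delta|$, hence $\V{\alpha}\geq 1/g-1/(2(g-1)^2)\gg_g 1$ and $|\sin\pi\alpha|\gg_g 1$.

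Combining (i) and (ii) in the displayed identity produces the desired bound $\ll_g g^{-T}$. The only nontrivial step is the congruence bookkeeping modulo $g-1$ that isolates the non-integer contributions to $\alpha$ and $g\alpha$, which in turn relies on the hypothesis $\theta\in\tfrac{1}{g-1}I_g$ guaranteeing that the centred expansion of $(g-1)\theta$ has no integer part. No local limit theorem or Fouvry--Mauduit input enters the argument: the proof is a pure digit-expansion computation.
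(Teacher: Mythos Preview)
Your proof is correct and follows essentially the same route as the paper's: both isolate the fact that, modulo $\Z$, one has $\alpha\equiv -\varepsilon_m/g+O(g^{-T})$ via the congruence $\sum_{i\leq m}\varepsilon_i\equiv r\md{g-1}$. The only cosmetic difference is the final step: the paper writes $\theta-r/(g-1)=a'/g^m+t$ with $a'\equiv\varepsilon_m\not\equiv 0\md{g}$, approximates $e(g^{m-1}Y(\theta-r/(g-1)))=e(Ya'/g)+O(g^{-T})$, and then uses exact orthogonality $\E_Y e(Ya'/g)=0$, whereas you keep the exact expression and bound the Dirichlet kernel ratio $|\sin\pi g\alpha|/|\sin\pi\alpha|$ directly.
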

This will be proved in \cref{section d cancellation}. The next lemma will be used to show there is cancellation in the average over $\xbd$ when a rather different structure is present in the expansion of $(g-1)\theta$. In this case, we look for a non-zero digit preceded by a string of zeros, such that the digits preceding this have a sum that is not congruent to $x$ modulo $g-1$.  
\begin{lemma}\label{pre block cancellation}
For $\theta\in \tfrac{1}{g-1}I_{g}$ suppose that $(g-1)\theta$ has centred base-$g$ expansion $\sum_{i=1}^{\infty}\varepsilon_{i}g^{-i}$. Let $T$ be an integer such that $0\leq T\leq K$, and suppose that $\varepsilon_{m-T}=\ldots=\varepsilon_{m-1}=0$ for some index $m$, $T\leq m\leq K$. Let $r\in \{0,\ldots, g-2\}$ be such that $\sum_{1\leq j \leq m}\varepsilon_{j}\nequiv r\md{g-1}$. Then
\[\E_{Y_{m-T},\ldots, Y_{m-1}}e\Big (\sum_{j=m-T}^{m-1}Y_{j}g^{j}\Big(\theta-\frac{r}{g-1}\Big)\Big)\ll_{g} g^{-T},\]
where $Y_{m-T},\ldots, Y_{m-1}$ are i.i.d. uniform random variables taking values in $\{0,\ldots, g-1\}-(g-1)/2$.
\end{lemma}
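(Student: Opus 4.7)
The plan is to factor the expectation by independence of the $Y_j$ and collapse the resulting product via the standard closed form for a centred uniform geometric average. With $\alpha := \theta - r/(g-1)$, independence gives
\[
\E_{Y_{m-T},\ldots,Y_{m-1}}e\Bigl(\sum_{j=m-T}^{m-1}Y_jg^j\alpha\Bigr) \;=\; \prod_{j=m-T}^{m-1}\E_Y e(Y g^j\alpha),
\]
and the usual geometric-sum computation yields $|\E_Y e(Y\beta)|=|\sin(\pi g\beta)|/(g|\sin(\pi\beta)|)$ for $\beta\notin\Z$, so the product telescopes in absolute value to
\[
g^{-T}\cdot\frac{|\sin(\pi g^m\alpha)|}{|\sin(\pi g^{m-T}\alpha)|}.
\]
The numerator is at most $1$, so the claim reduces to establishing the lower bound $\|g^{m-T}\alpha\|_{\R/\Z}\gg_g 1$ (the regime of bounded $T$ is trivial since $g^{-T}\gg_g 1$).

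To establish this lower bound I would unfold the centred expansion of $(g-1)\theta$. Using $g\equiv 1\pmod{g-1}$ one has $g^{m-T}\alpha\equiv g^{m-T}\theta - r/(g-1)\pmod 1$, and the zero-block hypothesis $\varepsilon_{m-T}=\cdots=\varepsilon_{m-1}=0$ lets one write $g^{m-T}\theta=(N+g^{-T}\delta)/(g-1)$, with $N=\sum_{i\le m-T}\varepsilon_i g^{m-T-i}\in\Z$ satisfying $N\equiv\sum_{i\le m-1}\varepsilon_i\pmod{g-1}$ and $\delta=\varepsilon_m+\sum_{\ell\ge 1}\varepsilon_{m+\ell}g^{-\ell}=O_g(1)$ (the collapse of the tail to a single term is precisely what the zero block buys). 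Letting $s$ denote the centred residue of $N-r$ modulo $g-1$, this gives
\[
g^{m-T}\alpha \equiv \frac{s + g^{-T}\delta}{g-1} \pmod 1.
\]
Since the perturbation $g^{-T}\delta$ is $O_g(g^{-T})$ and therefore negligible once $T$ exceeds a constant depending only on $g$, everything hinges on showing $|s|\ge 1$, which would then give $\|g^{m-T}\alpha\|_{\R/\Z}\ge 1/(2(g-1))$ and close the argument.

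Deriving $s\ne 0\pmod{g-1}$ from the hypothesis $\sum_{j\le m}\varepsilon_j\not\equiv r\pmod{g-1}$, which only reads as $s+\varepsilon_m\not\equiv 0\pmod{g-1}$, is where I expect the main obstacle to lie. When $\varepsilon_m=0$ the conclusion $s\ne 0$ is immediate, but when $\varepsilon_m\ne 0$ the residues can conspire to make $s\equiv 0$, and one must instead exploit the quantitative lower bound $|\delta|\ge|\varepsilon_m|-g/(2(g-1))\gg_g 1$ (which follows from $|\varepsilon_m|\ge 1$ and the fact that the centred tail $\sum_{\ell\ge 1}\varepsilon_{m+\ell}g^{-\ell}$ lies in $I_g$) so that the perturbation $g^{-T}\delta$ alone controls $\|g^{m-T}\alpha\|_{\R/\Z}$ from below, in tandem with a matching estimate on $|\sin(\pi g^m\alpha)|$ that keeps the sine ratio bounded by $O_g(1)$ even in this degenerate subcase. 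This residue/perturbation case split is where the real work of the proof lies.
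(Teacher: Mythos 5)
Your reduction is sound and is in substance the paper's own argument: the paper evaluates the geometric sum for the single variable $Y=\sum_{j=m-T}^{m-1}Y_jg^j$ and bounds it by $4/(\delta_g g^{T})$, the denominator being $|e(g^{m-T}\alpha)-1|\geq \delta_g/2$ with $\alpha=\theta-r/(g-1)$; this is exactly your telescoped sine ratio with the numerator bounded trivially, and in both treatments everything reduces to the lower bound $\Vert g^{m-T}\alpha\Vert_{\R/\Z}\gg_g 1$ (the case of bounded $T$ being trivial).

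The genuine gap is the degenerate subcase you flag ($\varepsilon_m\neq 0$ and $\sum_{j\leq m-1}\varepsilon_j\equiv r \pmod{g-1}$), and the rescue you sketch there cannot work, because the stated bound is actually false in that subcase. Take $(g-1)\theta=g^{-m}$ (so $\varepsilon_m=1$ and every other digit vanishes) and $r=0$: the hypothesis as written, $\sum_{j\leq m}\varepsilon_j=1\not\equiv 0$, holds, yet $g^{j}\alpha=g^{j-m}/(g-1)$ for $j\leq m-1$, so each factor satisfies $|\E\, e(Y_jg^j\alpha)|=1-O_g(g^{2(j-m)})$ and the product is $\gg_g 1$ uniformly in $T$. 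Equivalently, in this subcase $\Vert g^{m}\alpha\Vert_{\R/\Z}\gg_g 1$ (precisely because $\sum_{j\leq m}\varepsilon_j\not\equiv r$), so the sine ratio is $\gg g^{T}$, not $O_g(1)$; no estimate on $|\delta|$ can repair this. What is really happening is an off-by-one in the lemma's statement: the hypothesis the paper's proof actually uses (it sets $a\equiv\sum_{j\leq m-T-1}\varepsilon_j$ and asserts $a\not\equiv r$), and the hypothesis available at the point of application in \cref{no D canc} within the proof of \cref{minor arc few non zero} (where the relevant digit sum runs up to the non-zero digit preceding the zero block), is $\sum_{1\leq j\leq m-1}\varepsilon_j\not\equiv r\pmod{g-1}$, i.e.\ the digit $\varepsilon_m$ should not be included. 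Under that intended hypothesis your own computation already gives $s\not\equiv 0\pmod{g-1}$, hence $\Vert g^{m-T}\alpha\Vert_{\R/\Z}\geq \tfrac{1}{g-1}-O_g(g^{-T})\gg_g 1$ once $T\gg_g 1$, and your proof closes, coinciding with the paper's.
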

This lemma will also be proved in \cref{section d cancellation}. Assuming these lemmas, we may now prove \cref{minor arc few non zero}.
\begin{proof}[Proof of \cref{minor arc few non zero}] As noted in the preceding sketch, we may assume that $\theta \in \tfrac{1}{g-1}I_{g}$. Therefore from \cref{decoupling lemma}, we have 
\begin{equation}\label{proof minor arc eq 1}
    \E_{\xb}e(\xb\theta)\ind_{\sx{\xb}=\xi}=\ru{x\xi}\E_{\xbd}e\Big(\xbd \Big(\theta-\frac{x}{g-1}\Big)\Big)\Psi(\mathbf{a};0)+O_{C,g}((|\xi|+LR)^{2}K^{-3/2}).
\end{equation}
for some $x\in\{0,\ldots, g-2\}$ and $\mathbf{a}$ as defined by \cref{a tuple defn}. To prove the proposition, it remains to show that the purported main term on the right hand side of \cref{proof minor arc eq 1} is also bounded by $O_{C,g}((|\xi|+LR)^{2}K^{-3/2})$. We will show that at least one of the following two inequalities always holds, so that either
\begin{equation}\label{main term cancellation D}
    \E_{\xbd}e\Big(\xbd \Big(\theta-\frac{x}{g-1}\Big)\Big)\ll_{g} g^{-R}
\end{equation}
or
\begin{equation}\label{main term cancellation psi}
    \Psi(\mathbf{a};0)\ll_{C,g} R^{2}K^{-3/2}.
\end{equation}
Note that as both $\Psi(\mathbf{a};0)$ and the average over $\xbd$ are trivially bounded by 1, either bound is sufficient to give \cref{minor arc few non zero}, using that $R\geq \tfrac{3}{2}\log_{g} K$ to bound $g^{-R}$. Let us first consider when \cref{main term cancellation D} holds. As the $X_{i}$ are independent, 
    \begin{equation}\label{av D product individual Xi}
        \E_{\xbd}e\Big(\xbd\Big (\theta-\frac{x}{g-1}\Big)\Big)=\prod_{j\in \mathcal{D}}\E_{X_{j}}e\Big(g^{j}X_{j}\Big (\theta-\frac{x}{g-1}\Big)\Big).
    \end{equation} 
If the centred expansion of $(g-1)\theta$ is such that the assumptions of \cref{post cancellation} are fulfilled for any integers $m,T$ with $1\leq m\leq K$ and $T\geq R$, and with the value of $r$ in the statement of \cref{post cancellation} equal to $x$, then
\[\E_{X_{m-1}}e\Big (X_{m-1}g^{m-1}\Big (\theta-\frac{x}{g-1}\Big )\Big )\ll g^{-R}.\]
Note that $X_{m-1}\in \mathcal{D}$ in this case, as $\varepsilon_{m}\neq 0$, so this average over $X_{m-1}$ appears in \cref{av D product individual Xi}. Therefore under these circumstances, we can use \cref{post cancellation} to show \cref{main term cancellation D}.

Similarly, we can use \cref{pre block cancellation} to show \cref{main term cancellation D}. Suppose that the centred base-$g$ expansion of $(g-1)\theta$ is such that there exists an integer $m$ with $R \leq m\leq K$ for which the assumptions in the statement of \cref{pre block cancellation} are fulfilled with $T=R$ and $r=x$. In this case, \cref{pre block cancellation} applies with $Y_{i}=X_{i}$ for $i=m-R,\ldots, m-1$ to give 
\[\E_{X_{[m-R,m-1]}}e\Big (X_{[m-R,m-1]}\Big (\theta-\frac{x}{g-1}\Big )\Big )\ll_{g} g^{-R}.\]
In order to use this to bound \cref{av D product individual Xi}, we require that the random variables $X_{m-R},\ldots, X_{m-1}$ are all contained in $\mathcal{D}$. Thus we also require that $\varepsilon_{m}\neq 0$ here.

The next claim shows that if \cref{main term cancellation D} doesn't hold, then the expansion of $(g-1)\theta$ has a very specific structure. Recall that the indexes labelled $n_{j}$ below are those defined by \cref{n defn}, which index the location of the non-zero digits within the first $K$ digits in the centred expansion of $(g-1)\theta$.
        \begin{claim}\label{no D canc} 
        If 
        \begin{equation}\label{D canc claim}
            \E_{\xbd}e\Big (\xbd\Big ( \theta-\frac{x}{g-1}\Big )\Big)\gg_{g} g^{-R}
        \end{equation}
       then $n_{j+1}-n_{j}\leq R-1 $ for $j=1,\ldots, w_{K}((g-1)\theta)-1$. Moreover, $\sum_{j=1}^{K}\varepsilon_{j}\nequiv x \md{g-1}$. 
    \end{claim}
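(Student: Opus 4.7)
The plan is to prove the contrapositive: assume that either $n_{j_0+1} - n_{j_0} \geq R$ for some $j_0 \in \{1, \ldots, w-1\}$ (where $w \coloneq w_K((g-1)\theta)$), or $s_w \coloneq \sum_{j \leq n_w}\varepsilon_j \equiv x \md{g-1}$, and show that $\E_{\xbd}e(\xbd(\theta - x/(g-1))) \ll_g g^{-R}$. By independence of the $X_i$, the expectation factors as $\prod_{i \in \mathcal{D}} \E_{X_i}e(g^i X_i(\theta - x/(g-1)))$, with each factor of modulus at most $1$. It therefore suffices to exhibit either a single factor or a consecutive block of factors indexed by $[m-T, m-1] \subset \mathcal{D}$ whose contribution is $\ll g^{-R}$.

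For the first situation, suppose a gap $n_{j_0+1} - n_{j_0} \geq R$ exists and write $s_j \equiv \sum_{i \leq n_j}\varepsilon_i \md{g-1}$. If $s_{j_0} \equiv x$, I would apply \cref{post cancellation} at $m = n_{j_0}$, $T = n_{j_0+1} - n_{j_0} - 1 \geq R - 1$, $r = x$; the factor at $n_{j_0} - 1 \in \mathcal{D}$ is $\ll g^{-(R-1)}$. If instead $s_{j_0} \not\equiv x$, I would apply \cref{pre block cancellation} at $m = n_{j_0+1} - 1$, $T = R - 1$, $r = x$: the $R - 1$ digits $\varepsilon_{m-R+1}, \ldots, \varepsilon_{m-1}$ all lie inside the zero gap, the partial sum up to $m$ equals $s_{j_0} \not\equiv x$ (since $m$ is itself in the zero block), and the block of indices $[n_{j_0+1} - R, n_{j_0+1} - 2]$ sits in $\mathcal{D}$, yielding sub-product cancellation $\ll g^{-(R-1)}$.

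For the second situation, suppose $s_w \equiv x$ while all interior gaps are $\leq R - 1$. Applying \cref{post cancellation} at $m = n_w$ with $T = n_{w+1} - n_w - 1 \geq K - n_w$ gives a factor at $n_w - 1 \in \mathcal{D}$ of size $\ll g^{-T}$. If $T \geq R - 1$, this is the required cancellation. Otherwise $n_w > K - R + 1$; combined with the proposition's hypothesis $\V{(g-1)\theta} \geq g^{-K + 2LR}$ (which forces $n_1 \leq K - 2LR + O_g(1)$, since $|(g-1)\theta| \asymp_g g^{-n_1}$), this gives $n_w - n_1 \geq (2L-1)R - O_g(1)$. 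But the small-gap assumption implies $n_w - n_1 \leq (w-1)(R-1) \leq (L-1)(R-1)$, so comparing the two bounds forces $L(R+1) \leq O_g(1)$, contradicting $L \geq R \geq \tfrac{3}{2} \log_g K$ for $K$ sufficiently large.

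The hard part is the sub-case $s_{j_0} \not\equiv x$ in the first situation. The two natural applications of the cancellation lemmas, namely post at $n_{j_0}$ (which requires $s_{j_0} \equiv x$) and pre-block at $n_{j_0+1}$ (which requires $s_{j_0+1} \not\equiv x$), can both fail simultaneously when $s_{j_0+1} \equiv x$. The resolution is to exploit the freedom in \cref{pre block cancellation} to pick an interior index $m = n_{j_0+1} - 1$ strictly inside the zero gap, where the partial sum still equals the ``earlier'' value $s_{j_0} \not\equiv x$, while the preceding $R-1$ zeros and their indexing random variables still lie in $\mathcal{D}$ thanks to being within distance $R$ of the non-zero digit at $n_{j_0+1}$.
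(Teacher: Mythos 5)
Your proposal is correct and follows essentially the same route as the paper: the same case split on the partial digit sum modulo $g-1$, with \cref{post cancellation} and \cref{pre block cancellation} providing the cancellation in a factor of $\E_{\xbd}e(\xbd(\theta-\tfrac{x}{g-1}))$ indexed inside $\mathcal{D}$, and the same use of the standing hypothesis $\V{(g-1)\theta}\geq g^{-K+2LR}$ together with the gap bound $n_{w}-n_{1}\leq (L-1)(R-1)$ to rule out the final sub-case. The only divergences are cosmetic: the paper applies \cref{pre block cancellation} at $m=n_{j_0+1}$ rather than your interior choice $m=n_{j_0+1}-1$ (where your parameters $T=R-1$ fail in the edge case of a gap of exactly $R$, since then $\varepsilon_{m-T}=\varepsilon_{n_{j_0}}\neq 0$; this off-by-one is repaired by taking $T=R-2$ or $m=n_{j_0+1}$, costing only an $O_{g}(1)$ factor), and the paper's treatment of the second part truncates the zero run at $T=R$ instead of $T=n_{w+1}-n_{w}-1$, which amounts to the same estimate.
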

    \begin{proof}[Proof of \cref{no D canc}]
We can assume there is at least one non-zero digit within the first $K$ digits of $(g-1)\theta$, as otherwise $\theta\in \mathfrak{M}$. Suppose that there are consecutive non-zero digits within the first $K$ digits of the centred expansion of $(g-1)\theta$ indexed by $v,w$ such that $w-v\geq R$. By assumption we have that $\varepsilon_{v+1}=\ldots= \varepsilon_{w-1}=0$. Let $a\coloneq \sum_{r\leq v}\varepsilon_{r}$. If $a\equiv x\md{g-1}$, then we can apply \cref{post cancellation} with $m=v$, $T=w-v\geq R$ and $r=x$ to obtain $\E_{X_{v-1}}e(X_{v-1}g^{v-1} ( \theta-x/(g-1) ))\ll g^{-R}$. As $\varepsilon_{v}\neq 0$, we have $v-1\in \mathcal{D}$, so this gives sufficient cancellation in the average over $\xbd$ to contradict \cref{D canc claim}. Therefore, $a\nequiv x\md{g-1}$. 
        
However, if $a\nequiv x\md{g-1}$ for $a$ as defined above, the assumptions of \cref{pre block cancellation} are now fulfilled with $m=w$, $T=R$ and $r=x$. This gives that $\E_{\xb_{[w-R,w-1]}}e(\xb_{[w-R,w-1]} ( \theta-x/(g-1) ))\ll_{g} g^{-R}$. Note that the interval $\{w-R,\ldots, w-1\}$ is contained in $\mathcal{D}$ by definition \cref{D set defn}, as $\varepsilon_{w}\neq 0$, so this again provides enough cancellation in the average over $\xbd$ to contradict \cref{D canc claim}.

Therefore we must have that each non-zero digit within the first $K$ digits of the centred expansion of $(g-1)\theta$ occurs within $R-1$ digits of another non-zero digit. Let $t\coloneq n_{w_{K}((g-1)\theta)}$ be the largest index of a non-zero digit occurring among the first $K$ digits in the centred expansion of $(g-1)\theta$.

To establish the second part of the claim, suppose that $\sum_{j=1}^{t}\varepsilon_{j}\equiv x \md{g-1}$. If $K-t\geq R$, then we can apply \cref{post cancellation} with $m=t$ and $T=R$, contradicting \cref{D canc claim}. On the other hand, if $K-t<R$, then $n_{1}>K-L(R-1)$ from the bound\begin{equation}\label{t-s bound}
           t-n_{1}=\sum_{j=1}^{w_{K}((g-1)\theta)-1}n_{j+1}-n_{j}\leq (L-1)(R-1).
       \end{equation}
       As $\V{(g-1)\theta}<g^{-n_{1}+1}$, this contradicts the assumption that $\V{(g-1)\theta}\geq g^{-K+2LR}$, so we must have $\sum_{j=1}^{t}\varepsilon_{j}\nequiv x \md{g-1}$.
        \end{proof}
\cref{no D canc} means that if there is insufficient cancellation in the average over $\xbd$ to show \cref{main term cancellation D}, then all the non-zero digits within the first $K$ digits of $(g-1)\theta$ occur very close together. In this case, we use this structure to show \cref{main term cancellation psi} holds, which proves \cref{minor arc few non zero}. To simplify notation, let \[ s\coloneq n_{1}, \qquad t\coloneq n_{w_{K}((g-1)\theta)} \qquad \mbox{and} \qquad   u\coloneq n_{w_{K}((g-1)\theta)+1}.\]

Suppose from now on that \cref{main term cancellation D} doesn't hold, and recall that $x$ is such that $|\mathcal{E}_{x}|$ is maximal among the $|\mathcal{E}_{r}|$. Let $a\coloneq \sum_{j=s}^{t}\varepsilon_{j}$; from \cref{no D canc}, $a\nequiv x \md{g-1}$. Let $y\in\{0,\ldots, g-2\}$ be such that $a\equiv y \md{g-1}$. Then the set $\mathcal{E}$ is partitioned into two intervals, $\mathcal{E}_{0}=\{0,\ldots, s-R\}$ and $\mathcal{E}_{y}=\{t,\ldots, \min(K-1,u-R-1)\}$, with $\mathcal{E}_{r}=\emptyset$ for $1\leq r\leq g-2$, $r\neq y$. Thus $x$ must either be $0$ or $y$.

First note that $y\neq 0$. If not, we would have $\mathcal{E}_{0}=\{0,\ldots, K-1\}\setminus \mathcal{D}$, and so $x=0$, as $|\mathcal{E}_{0}|\geq K-LR$ and $|\mathcal{E}_{r}|=0$ for $r\neq 0$. However this gives $a\equiv x \md{g-1}$, contradicting the second part of \cref{no D canc}.

Additionally, if $x=y$ then this contradicts the fact that $a\nequiv x \md{g-1}$. Therefore we must have $x=0$, whence $|\mathcal{E}_{0}|\geq |\mathcal{E}_{y}|$ by definition of $x$.

We use the assumption that $\V{(g-1)\theta}\geq g^{-K+2LR}$ to show that $|\mathcal{E}_{y}|\geq R$. As $|\mathcal{E}_{y}|=|\{t,\ldots , \min(K-1,u-R-1)\}|$, if $|\mathcal{E}_{y}|<R$ then $t>K-2R$. Recall from \cref{t-s bound} and the definition of $s$ and $t$ that $t-s\leq (L-1)(R-1)$. Combining these bounds, we see that $s>K-2R-(L-1)(R-1)$. As $\V{(g-1)\theta}\leq g^{-s+1}$, this contradicts the assumption that $\V{(g-1)\theta}\geq g^{-K+2LR}$.

Therefore $|\mathcal{E}_{0}|\geq |\mathcal{E}_{y}|\geq R$. Recall the definition of the tuple $\mathbf{a}$ from \cref{a tuple defn}; as $x=0$ we have that \[\mathbf{a}=(|\mathcal{E}_{0}|,0,\ldots,0,|\mathcal{E}_{y}|,0,\ldots,0 ).\] 
Our aim is to bound $\Psi(\mathbf{a};0)$ using \cref{psi 0 bound} with $m=R$ and $t=y$. To satisfy the assumptions of \cref{psi 0 bound} we need $R\ll_{C} |\mathcal{E}_{0}|^{1/4}$, which follows from the fact that $|\mathcal{E}_{0}|\geq (K-|\mathcal{D}|)/2\geq (K-LR)/2$. As $LR\leq CK^{1/4}$ by assumption, we have that $|\mathcal{E}_{0}|\gg_{C} K$, and certainly $R\leq CK^{1/4}$, giving $R\ll_{C} |\mathcal{E}_{0}|^{1/4} $. Applying \cref{psi 0 bound} gives the bound 
\begin{equation*}\label{psi cancellation}
    \Psi(\mathbf{a};0)\ll_{C,g} g^{-R}+R^{2}K^{-3/2}.
\end{equation*}
This gives \cref{main term cancellation psi}, using that $R\geq \tfrac{3}{2}\log_{g}K$.
\end{proof}
\subsection{Decoupling the averages over $\xbd$ and $\xbee$}\label{section decoupling}
In this section we prove the decoupling result, \cref{decoupling lemma}. This lemma allows us to replace the condition on the digits of $X$, $s(\xbd)+s(\xbee)=\xi$, with the condition $s(\xbee)=0$, even as the digits in $\xbd$ vary.
\begin{proof}[Proof of \cref{decoupling lemma}] Let $(g-1)\theta$ have centred base-$g$ expansion given by \cref{theta digit exp}. Dividing through by $(g-1)$ in this expansion gives
\[\theta=\frac{1}{g-1}\sum_{j=1}^{w}\varepsilon_{n_{j}}g^{-n_{j}}+\frac{\eta}{g-1},\]
where $|\eta|<g^{-K}$ and $w\coloneq w_{K}((g-1)\theta)$. Thus we can separate the contribution to $e(\xb\theta)$ from each non-zero digit as follows,
\begin{equation}\label{decoupling proof eq 1}
    e(\xb\theta)=e\Big(\xb\frac{\eta}{g-1}\Big)\prod_{j=1}^{w} e\Big(\xb \frac{\varepsilon_{n_{j}}}{(g-1)g^{n_{j}}}\Big).
\end{equation}
We have that
\[\Big|\xb_{[0,n_{j}-R-1]}\frac{\varepsilon_{n_{j}}}{(g-1)g^{n_{j}}}\Big|\leq g^{-R} \textrm{ and } \xb_{[n_{j},K-1]}\frac{\varepsilon_{n_{j}}}{g^{n_{j}}}\equiv X_{[n_{j},K-1]}\varepsilon_{n_{j}}\md{g-1}, \]
where the latter statement uses \cref{trans inv}. Thus the contribution to $e(X\theta)$ from the $n_{j}$\sth digit of $(g-1)\theta$ is
     \begin{align}
    e\Big(\xb\frac{\varepsilon_{j}}{(g-1)g^{n_{j}}}\Big)=e\Big(\xb_{[n_{j}-R,n_{j}-1]}\frac{\varepsilon_{n_{j}}}{(g-1)g^{n_{j}}}\Big )\ru{\varepsilon_{n_{j}}\xb_{[n_{j},K-1]}}+O(g^{-R})\label{ni contribution}.
    \end{align}
The $\eta$ term gives a similar contribution: let $u\coloneq n_{w_{K}((g-1)\theta)+1}-R$, then
    \begin{equation}\label{eta contribution}
        e(\xb\eta/(g-1))=e(\xb_{[u,K-1]}\eta/(g-1))+O(g^{-R}).
    \end{equation}
    If $u\geq K$, then the interval $[u,K-1]\cap \{0,\ldots, K-1\}$ is empty and $e(X\eta/(g-1))=1+O(g^{-R})$. From \cref{decoupling proof eq 1}, \cref{ni contribution} and \cref{eta contribution}, we have
\begin{align}
    \E_{\xb}e(\xb\theta)\ind_{\sx{\xb}=\xi}&=\E_{\xb}e\Big(\xb\frac{\eta}{g-1}\Big)\prod_{i=1}^{w}e\Big (\xb\frac{\varepsilon_{n_{i}}}{(g-1)g^{n_{i}}}\Big)\ind_{\sx{\xb}=\xi}\nonumber\\
    &=\E_{\xb}e\Big(\xb_{[u,K-1]}\frac{\eta}{g-1}\Big)\prod_{j=1}^{w}e\Big (\xb_{[n_{j}-R,n_{j}-1]}\frac{\varepsilon_{n_{j}}}{(g-1)g^{n_{j}}}\Big)\nonumber\\&\hspace{12em}\times \ru{\xb_{[n_{j},K-1]}\varepsilon_{n_{j}}}\ind_{\sx{\xb}=\xi}+O_{C,g}(L g^{-R}).\label{decoupling eg 2}\end{align}
    Here, we have used that $L\leq CK^{1/4}$ and $R\geq \tfrac{3}{2}\log_{g}K$ to obtain this error term.
Let $\mathcal{E}\coloneq \{0,\ldots, K-1\}\setminus \mathcal{D}$, and note that $\mathcal{E} $ is partitioned into $\mathcal{E}= \mathcal{E}_{0}\cup \ldots \cup \mathcal{E}_{g-2}$ for $\mathcal{E}_{r}$ defined by \cref{E set defn}. With this notation, we can rewrite \cref{decoupling eg 2} as
\begin{equation}\label{decoupling eq 2.5}\E_{X}e(X\theta)\ind_{s(X)=\xi}=\E_{\xbd}e(\xbd\theta)\E_{\xbee}\prod_{j=1}^{w}\ru{\xb_{[n_{j},K-1]\cap \mathcal{E}}\varepsilon_{j}}\ind_{\sx{\xb}=\xi}+O_{C,g}(L g^{-R}).\end{equation}
As the terms $n_{j}$ index precisely the non-zero digits of $(g-1)\theta$, each random variable $X_{u}$ for $u\in \mathcal{E}$ occurs in the above product once for each non-zero digit with index $n_{j}\leq u$, weighted by the value of that non-zero digit, $\varepsilon_{n_{j}}$. Rearranging the above product gives
        \[\prod_{j=1}^{w}\ru{\xb_{[n_{j},K-1]\cap \mathcal{E}}\varepsilon_{n_{j}}}=\prod_{u\in \mathcal{E}}\ru{X_{u}g^{u}\sum_{r\leq u}\varepsilon_{r}}=\prod_{v=1}^{g-2}\ru{v\xb_{\mathcal{E}_{v}}},\]
where the final equality follows from the definition of the sets $\mathcal{E}_{v}$ given by \cref{E set defn}. Hence from \cref{decoupling eq 2.5} and the above equation, $\E_{X}e(X\theta)\ind_{s(X)=\xi}$ equals
    \begin{equation}\label{decoupling eq 3}
        \E_{\xbd}e(\xbd\theta) \E_{\xbee}\ru{\xb_{\mathcal{E}_{1}}+2\xb_{\mathcal{E}_{2}}+\ldots+(g-2)\xb_{\mathcal{E}_{g-2}}}\ind_{\sx{\xbee}=\xi-\sx{\xbd}}+O_{C,g}(L g^{-R}).
    \end{equation}
Let $\mathbf{a}_{0}\coloneq(|\mathcal{E}_{0}|,\ldots,|\mathcal{E}_{g-2}|)$. From \cref{Psi expectation form} the average over $\xbee$ equals
\begin{align*}\label{decoupling psi for a0}
   \E_{\xbee}\ru{\xb_{\mathcal{E}_{1}}+2\xb_{\mathcal{E}_{2}}+\ldots+(g-2)\xb_{\mathcal{E}_{g-2}}}\ind_{\sx{\xbee}=\xi-\sx{\xbd}}=\Psi(\mathbf{a}_{0};\xi-\sx{\xbd}),
\end{align*}
and thus \cref{decoupling eq 3} equals
\[\E_{\xbd}e(\xbd\theta) \Psi(\mathbf{a}_{0};\xi-\sx{\xbd})+O_{C,g}(L g^{-R}).\]
Our aim is to remove the dependence on $\xbd$ from the term $ \Psi(\mathbf{a}_{0};\xi-\sx{\xbd})$ by applying \cref{psi lemma}. However, in order to apply \cref{psi lemma} with the tuple $\mathbf{a}_{0}$, we require that the first coordinate of $\mathbf{a}_{0}$, $|\mathcal{E}_{0}|$, satisfies $|\mathcal{E}_{0}|\geq |\mathcal{E}_{j}|$ for all $1\leq j \leq g-2$, which may not be the case. To circumvent this issue, we exploit the fact that multiplying \cref{decoupling eq 3} through by factors of $\ru{X}$ increases the coefficients of each $X_{\mathcal{E}_{r}}$, essentially allowing us to cycle the coordinates of $\mathbf{a}_{0}$. To do this, we use the following relation. As $\xb\equiv \sx{\xb} \md{g-1}$, we have $\E_{X}\ru{\xb}\ind_{\sx{\xb}=\xi}=\ru{\xi}$, and thus for any integer $m$,
 \begin{equation}\label{decoupling eq 5}
     \E_{\xb}e(\xb\theta)\ind_{\sx{\xb}}=\ru{m\xi}\E_{\xb}e(\xb\theta)\ru{-m\xb }\ind_{\sx{\xb}=\xi}.
 \end{equation}
Let $x$ be defined as in the statement of \cref{decoupling lemma}, that is, $x\in\{0,\ldots, g-2\}$ is such that $\mathcal{E}_{x}$ is maximal among the $|\mathcal{E}_{r}|$, and let $\mathbf{a}$ be as defined by \cref{a tuple defn},
\[  \mathbf{a}\coloneq(|\mathcal{E}_{x}|,|\mathcal{E}_{x+1}|,\ldots, |\mathcal{E}_{g-2}|,|\mathcal{E}_{0}|,|\mathcal{E}_{1}|,\ldots, |\mathcal{E}_{x-1}|).\]
From \cref{decoupling eq 5}, multiplying \cref{decoupling eq 3} through by $\ru{-x\xbee}$ gives
\begin{align}
    \E_{\xb}e(X\theta)\ind_{s(X)=\xi}&=\ru{x\xi}\E_{X}e(X\theta)\ru{-xX}\ind_{s(X)=\xi}\nonumber\\
    &=\ru{x\xi}\E_{\xbd}e\Big (\xbd\Big (\theta-\frac{x}{g-1}\Big )\Big )\E_{\xbee}\prod_{j=0}^{g-2}\ru{(j-x)\xb_{\mathcal{E}_{j}}}\ind_{\sx{\xbee}=\xi-\sx{\xbd}}\nonumber\\
    &=\ru{x\xi}\E_{\xbd}e\Big (\xbd\Big (\theta-\frac{x}{g-1}\Big )\Big )\Psi(\mathbf{a};\xi-\sx{\xbd}),\label{decoupling eq 4}
\end{align}
using \cref{Psi expectation form} and the definition of $\mathbf{a}$ for the final equality. As $|\mathcal{E}_{x}|$ is maximal among the sets $\mathcal{E}_{r}$, the tuple $\mathbf{a}$ fulfils the requirement that its first coordinate is the largest.
 
It remains to check that the other assumption for \cref{psi lemma} holds for the tuple $\mathbf{a}$ and $\nu\coloneq \xi-s(\xbd)$, namely that $|\nu|\leq C'|\mathcal{E}_{x}|^{1/4}$ for some $C'>0$. We have that $|\nu|\leq |\xi|+g|\mathcal{D}|/2\ll_{C,g} CK^{1/4}$, using the bound $|\mathcal{D}|\ll LR$ and the assumption $\max(|\xi|,LR)\leq CK^{1/4}$. Moreover, $|\mathcal{E}_{x}|\geq (K-|\mathcal{D}|)/(g-1)$ by definition of $x$, and thus $|\mathcal{E}_{x}|\gg_{C,g} K$. Therefore $|\nu|\ll_{C,g} |\mathcal{E}_{x}|^{1/4}$, and \cref{psi lemma} applies to give the bound
\begin{equation}\label{decoupling eq 6}
    \Psi(\mathbf{a};\xi-\sx{\xbd})=\Psi(\mathbf{a};0)+O_{C,g}((|\xi|+LR)^{2}K^{-3/2}).
\end{equation}
Combining \cref{decoupling eq 5}, \cref{decoupling eq 4} and \cref{decoupling eq 6} gives
    \[\E_{\xb}e(\xb\theta)\ind_{\sx{\xb}=\xi}=\ru{x\xi}\E_{\xbd}e(\xbd\theta)\ru{-\xbd x}\Psi(\mathbf{a};0)+O_{C,g}(Lg^{-R}+(|\xi|+LR)^{2}K^{-3/2}).\]
As $R\geq \tfrac{3}{2}\log_{g}K$, the $Lg^{-R} $ can be absorbed into the $(|\xi|+LR)^{2}K^{-3/2}$ term in the error term, giving the bound in the statement of the lemma.
\end{proof}
\subsection{Showing cancellation in the average over $\xbd$}\label{section d cancellation}
In this section, we prove \cref{post cancellation} and \cref{pre block cancellation}, which are used in the proof of \cref{minor arc few non zero} to give criteria on the structure of the centred base-$g$ expansion of $(g-1)\theta$ to get cancellation in $\E_{\xbd}e(\xbd(\theta-x/(g-1))).$ Let us restate \cref{post cancellation}.
\begin{post-canc-rpt}
  For $\theta\in \tfrac{1}{g-1}I_{g}$ suppose that $(g-1)\theta$ has centred base-$g$ expansion $\sum_{i=1}^{\infty}\varepsilon_{i}g^{-i}$. Suppose further that there is an index $m$, $1\leq m\leq K$, such that $\varepsilon_{m}\neq 0$, and $\varepsilon_{m+1}=\ldots= \varepsilon_{m+T}=0$, for some integer $T\geq 0$. Let $r\in \{0,\ldots, g-2\}$ be such that $\sum_{1\leq j \leq m}\varepsilon_{j}\equiv r\md{g-1}$. Then \[\E_{Y}e\Big(g^{m-1}Y\Big (\theta-\frac{r}{g-1}\Big)\Big)\ll g^{-T},\]
       where $Y$ is a random variable uniformly taking values in $\{0,\ldots, g-1\}-(g-1)/2$.
        \end{post-canc-rpt}
\begin{proof}
As $\varepsilon_{m+1}=\ldots=\varepsilon_{m+T}=0$ we have
    \begin{equation}\label{post canc theta structure}
        \theta=\frac{a}{(g-1)g^{m}}+t,
    \end{equation}
    where $a\coloneq \sum_{1\leq j\leq m}\varepsilon_{j}g^{m-j}$ and $t\coloneq\sum_{j\geq m+T+1}\varepsilon_{j}g^{-j}$. Note that $a\in \Z$ and $a\equiv r \md{g-1}$. We also have $|t|<g^{-m-T}$. Thus 
    \begin{align*}
        \theta-\frac{r}{g-1}&=\frac{a-g^{m}x}{(g-1)g^{m}}+t=\frac{a'}{g^{m}}+t,
    \end{align*}
    where $a'\in \Z$. Crucially, $a'\nequiv 0 \md{g}$ To see this, note that $a'\equiv a \equiv \varepsilon_{m}\md{g}$, and by assumption, $\varepsilon_{m}\neq 0$. Using this expression for $\theta$, we have
    \begin{equation}\label{post canc eq 1}
        e\Big(g^{m-1}Y\Big(\theta-\frac{r}{g-1}\Big)\Big)=e\Big(Y\frac{a'}{g}\Big)e(Ytg^{m-1})=e\Big(Y\frac{a'}{g}\Big)+O(g^{-T}).
    \end{equation}
   As $Y$ uniformly takes values in $\{0,\ldots, g-1\}$ and $a'\nequiv 0 \md{g}$, averaging \cref{post canc eq 1} over $Y$ gives the bound stated in the lemma, as $\E_{Y}e(Ya'/g)=0$.
\end{proof}
We now prove \cref{pre block cancellation}, which we also restate for convenience.
\begin{pre-canc-rpt}
For $\theta\in \tfrac{1}{g-1}I_{g}$ suppose that $(g-1)\theta$ has centred base-$g$ expansion $\sum_{i=1}^{\infty}\varepsilon_{i}g^{-i}$. Let $T$ be an integer such that $0\leq T\leq K$, and suppose that $\varepsilon_{m-T}=\ldots=\varepsilon_{m-1}=0$ for some index $m$, $T\leq m\leq K$. Let $r\in \{0,\ldots, g-2\}$ be such that $\sum_{1\leq j \leq m}\varepsilon_{j}\nequiv r\md{g-1}$. Then
\[\E_{Y_{m-T},\ldots, Y_{m-1}}e\Big (\sum_{j=m-T}^{m-1}Y_{j}g^{j}\Big(\theta-\frac{r}{g-1}\Big)\Big)\ll_{g} g^{-T},\]
where $Y_{m-T},\ldots, Y_{m-1}$ are i.i.d. uniform random variables taking values in $\{0,\ldots, g-1\}-(g-1)/2$.
\end{pre-canc-rpt}
\begin{proof}
    The assumptions on the digits of $(g-1)\theta$ ensure that $\theta$ has the following form:
    \[\theta=\frac{a}{(g-1)g^{m-T -1}}+t,\]
    where $a\nequiv r\md{g-1}$, and $|t|<g^{-m}$. To obtain this, take $a=\sum_{1\leq j\leq m-T -1}\varepsilon_{j}g^{j}$, and $t=\sum_{j\geq m}\varepsilon_{j}g^{-j}$. Then we have
    \begin{equation}\label{pre canc structure theta}
        \theta-\frac{r}{g-1}=\frac{a'}{(g-1)g^{m-T -1}}+t,
    \end{equation}
    where $a'=a-g^{m-T-1}r$. All we will require about $a'$ is that $a'\nequiv 0 \md{g-1}$, which follows from the fact that $a'\equiv a-r\md{g-1}$. 
    Let $Y\coloneq \sum_{j=m-T}^{m-1}Y_{j}g^{j}$; this uniformly takes values in $g^{m-T}\{0,\ldots, g^{T}-1\}-g^{m-T}(g^{T}-1)/2$. 
    Let
    \[\gamma \coloneq e\Big (-g^{m-T}\frac{g^{T}-1}{2}\Big (\theta-\frac{r}{g-1}\Big)\Big).\]
 As $Y$ is uniformly distributed, we have
    \begin{align}
        \E_{Y}e\Big (Y\Big (\theta-\frac{r}{g-1}\Big)\Big)&=\frac{\gamma}{g^{T}}\sum_{b=0}^{g^{T}-1}e\Big(g^{m-T}b\Big(\theta-\frac{r}{g-1}\Big )\Big)\nonumber\\
        &=\frac{\gamma}{g^{T}}\sum_{b=0}^{g^{T}-1}e\Big(g^{m-T}b\Big (\frac{a'}{(g-1)g^{m-T-1}}+t\Big )\Big)\nonumber\\
        &=\frac{\gamma}{g^{T}}\sum_{b=0}^{g^{T}-1}e\Big(\frac{ga'b}{g-1}+tg^{m-T}b\Big)\nonumber\\
        &=\frac{\gamma}{g^{T}}\frac{e\big (\frac{g^{T+1}a'}{g-1}+g^{m}t\big)-1}{e\big (\frac{ga'}{g-1}+g^{m-T}t\big)-1}.\label{pre canc eq 2}
    \end{align}
    As $ga'\nequiv 0 \md{g-1}$, we have that $|e(ga'/(g-1))-1|\geq |1-e(1/(g-1))|\eqqcolon \delta_{g}.$ Therefore, for $K$ sufficiently large,
    \[\Big|e\Big (\frac{ga'}{g-1}+g^{m-T}t\Big)-1\Big|=\Big|e\Big (\frac{ga'}{g-1}\Big)(1+O(g^{-T}))-1\Big|\geq \delta_{g}/2.\]
Thus from \cref{pre canc eq 2},
\[\Big |\E_{Y}e\Big(Y\Big (\theta-\frac{r}{g-1}\Big )\Big)\Big | \leq \frac{2}{g^{T}|e\big (\frac{ga'}{g-1}+g^{m-T}t\big)-1|}\leq \frac{4}{\delta_{g}g^{T}}.\]
\end{proof}

\section{Restricting to sums of Niven numbers}\label{section divisibility}
In this section we establish \cref{Main theorem N count} as a consequence of \cref{Main theorem S count}, by showing that the expected proportion of representations of $M$ as the sum of three integers with near-average digit sum are of sums of three Niven numbers. At the end of the section, we deduce \cref{Main theorem 1} from \cref{Main theorem N count} by giving an explicit choice of $k_{1},k_{2}$ and $k_{3}$ which satisfy the assumptions of \cref{Main theorem N count}.

Recall that $S_{i}=\{n<g^{K}\st s_{g}(n)=k_{i}\}$ for $i=1,2,3$ and integers $k_{1},k_{2}$ and $k_{3}$, where $s_{g}(n)$ denotes the base-$g$ digit sum of $n$. Throughout this section, we assume that $k_{1},k_{2}$ and $k_{3}$ satisfy the conditions given by \cref{k cond S} and \cref{k cond N}, which we restate here:
\begin{gather*}
    k_{1}+k_{2}+k_{3}\equiv M \md{g-1}, \hspace{1em} |k_{i}-\mu_{K}|\leq C_{g}, \nonumber\\
    (k_{i},g)=1 \textrm{ for }i=1,2,3 \textrm{ and }(k_{i},k_{j})=1 \textrm{ for }i,j\in \{1,2,3\} ,i\neq j.
\end{gather*}
The set of Niven numbers less than $g^{K}$ which have digit sum precisely $k_{i}$ is denoted by $\mathcal{N}_{i}$; these are the element of $S_{i}$ which are divisible by $k_{i}$. Let $g_{i}(\theta)$ be the Fourier transform $\widehat{\ind_{\mathcal{N}_{i}}}(\theta)$, so that 
\begin{equation*}
    g_{i}(\theta)=\sum_{n\in \mathcal{N}_{i}}e(n\theta).
\end{equation*}
The number of representations of $M$ as $n_{1}+n_{2}+n_{3}$ is
\begin{equation}\label{Eq counting n_1 n_2 n_3 reps}
    r_{\mathcal{N}_{1}+\mathcal{N}_{2}+\mathcal{N}_{3}}(M)=\int_{0}^{1}g_{1}(\theta)g_{2}(\theta)g_{3}(\theta)e(-M\theta)\dif \theta.
\end{equation}
To obtain an expression for $r_{\mathcal{N}_{1}+\mathcal{N}_{2}+\mathcal{N}_{3}}(M)$ in terms of $r_{S_{1}+S_{2}+S_{3}}(M)$, we use the following expression to link $g_{i}(\theta)$ and $f_{i}(\theta)$. By orthogonality, we can write $\ind_{k\mid n}$ as
\begin{equation*}
    \ind_{k\mid n}=\frac{1}{k}\sum_{j=0}^{k-1}e\big(\frac{jn}{k}\big).
\end{equation*}
From the definition of $\mathcal{N}_{i}$ and using the above equation we have
\begin{equation}\label{eq: g_i as sum over f translates}
    g_{i}(\theta)=\sum_{\substack{n\in S_{i}\\ k_{i}\mid n}}e(n\theta)=\frac{1}{k_{i}}\sum_{j=0}^{k_{i}-1}f_{i}\big(\theta+\frac{j}{k_{i}}\big).
\end{equation}
Here, $f_{i}(\theta)$ is the Fourier transform $\widehat{\ind_{S_{i}}}(\theta)$. From \cref{Eq counting n_1 n_2 n_3 reps} and \cref{eq: g_i as sum over f translates}, we obtain
\begin{equation}\label{count for niven numbers}
    r_{\mathcal{N}_{1}+\mathcal{N}_{2}+\mathcal{N}_{3}}(M)=\frac{1}{k_{1}k_{2}k_{3}}\sum_{\substack{j_{1},j_{2},j_{3}\\0\leq j_{i}<k_{i}}}\int_{0}^{1}\prod_{i=1}^{3}f_{i}\big(\theta+\frac{j_{i}}{k_{i}}\big)e(-M\theta)\dif \theta.
\end{equation}
Let $G(j_{1},j_{2},j_{3})$ denote the following,
\[G(j_{1},j_{2},j_{3})\coloneq \int_{0}^{1}\prod_{i=1}^{3}f_{i}\big(\theta+\frac{j_{i}}{k_{i}}\big)e(-M\theta)\dif \theta,\]
and let \begin{equation}\label{J defn}
    \mathcal{J}\coloneq\{(j_{1},j_{2},j_{3})\in \Z^{3} \st j_{i}\in \{0,\tfrac{k_{i}}{g-1},\ldots,\tfrac{(g-2)k_{i}}{g-1}\} \textrm{ for }i=1,2,3\}.
\end{equation} For all tuples $(j_{1},j_{2},j_{3})\in \mathcal{J}$, we will show that
\[G(j_{1},j_{2},j_{3})=r_{S_{k_{1}}+S_{k_{2}}+S_{k_{3}}}(M).\]
This is immediate for $G(0,0,0)$ from \cref{main S count}, and for other $(j_{1},j_{2},j_{3})\in \mathcal{J}$ (if they exist) we use the relation \cref{trans inv}. Suppose $j_{i}=a_{i}k_{i}/(g-1)$ for $i=1,2,3$ and integers $a_{i}$. Then from \cref{trans inv},
\[f_{i}\Big(\theta+\frac{a_{i}}{g-1}    \Big )=\ru{a_{i}k_{i}}f_{i}(\theta)=f_{i}(\theta),\]
as $(g-1)\mid a_{i}k_{i}$. Thus
\begin{equation}\label{to bound G contrib}
    r_{\mathcal{N}_{1}+\mathcal{N}_{2}+\mathcal{N}_{3}}(M)=\frac{|\mathcal{J}|}{k_{1}k_{2}k_{3}}r_{S_{1}+S_{2}+S_{3}}(M)+\frac{1}{k_{1}k_{2}k_{3}}\sum_{\substack{0\leq j_{i}<k_{i}\\(j_{1},j_{2},j_{3})\notin \mathcal{J}}}G(j_{1},j_{2},j_{3}).
\end{equation}
The next lemma is used to show that the remaining tuples $(j_{1},j_{2},j_{3})\notin \mathcal{J}$ contribute a negligible amount to \cref{to bound G contrib}. 
\begin{lemma}\label{G small}
Let $k_{1},k_{2},k_{3}$ be integers such that conditions \cref{k cond S} and \cref{k cond N} hold. Let $j_{1},j_{2},j_{3}$ be integers such that $(j_{1},j_{2},j_{3})\notin \mathcal{J}$ and $0\leq j_{i}\leq k_{i}-1$ for $i=1,2,3$. Then 
    \[G(j_{1},j_{2},j_{3})\ll_{g} g^{2K}K^{-29/6}.\]
\end{lemma}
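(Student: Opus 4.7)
The plan is to exploit the fact that $(j_1, j_2, j_3) \notin \mathcal{J}$ forces at least one coordinate, say WLOG $j_1$, to satisfy $k_1 \nmid (g-1) j_1$, and then to combine this with the coprimality $(k_1, k_i) = 1$ to derive a geometric disjointness property among translated major arcs. Writing $\mathfrak{M}_i(j_i) := \mathfrak{M} - j_i/k_i$, so that $\theta \in \mathfrak{M}_i(j_i)$ iff $\theta + j_i/k_i \in \mathfrak{M}$, the first step is a short Diophantine argument: if $\theta$ lies in $\mathfrak{M}_1(j_1) \cap \mathfrak{M}_i(j_i)$ for some $i \in \{2,3\}$, then $\V{(g-1)(j_i k_1 - j_1 k_i)/(k_1 k_i)} \le 2(g-1)\varepsilon \ll K^{3/4} g^{-K}$; since $k_1 k_i \ll K^2$, this forces $k_1 k_i \mid (g-1)(j_i k_1 - j_1 k_i)$, which together with $(k_1, k_i) = 1$ yields $k_1 \mid (g-1) j_1$, contradicting the choice of $j_1$. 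The same calculation additionally shows that on $\mathfrak{M}_1(j_1)$ each shifted argument $\theta + j_i/k_i$ for $i = 2, 3$ lies at distance $\gg K^{-2}$ from every multiple of $1/(g-1)$, placing it in the regime where \cref{FM very good g} and \cref{minor arc few non zero} deliver the sharp pointwise bound $|f_i(\theta + j_i/k_i)| \ll_g g^K (\log K)^4 K^{-3/2}$.

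With this disjointness in hand, I split $[0,1] = \mathfrak{M}_1(j_1) \sqcup \mathfrak{m}_1(j_1)$ and estimate each piece. On $\mathfrak{M}_1(j_1)$, the key improvement over the trivial $|S_1|\cdot|\mathfrak{M}_1(j_1)|$ estimate is a sharp $L^1$ bound on $f_1$: substituting $u = g^K \psi$ in the asymptotic of \cref{major arc asymp lemma} on each translate $[-\varepsilon,\varepsilon] + a/(g-1)$ and using $\int_{|u| \le T}|\sin(\pi u)/(\pi u)|\,du \ll \log T$ gives $\int_{\mathfrak{M}_1(j_1)} |f_1(\theta + j_1/k_1)|\,d\theta \ll (\log K)/\sqrt{K}$. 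On $\mathfrak{m}_1(j_1)$ I further decompose according to whether $(g-1)(\theta + j_1/k_1)$ lies in the narrow annulus $K^{3/4} g^{-K} \le \V{(g-1)(\theta + j_1/k_1)} \le g^{-K+2\ell^2}/(g-1)$, where only the weaker bound $|f_1| \ll g^K K^{-5/4}$ of \cref{close to zero minor arc} is available but its measure $\ll g^{-K + O(\log^2 K)}$ is effectively negligible against the $g^{2K}$ factor (and the same Diophantine reasoning again gives the strong bound on $f_2, f_3$ there), or outside it, where \cref{minor arc few non zero} and \cref{FM very good g} upgrade $|f_1|$ itself to $g^K (\log K)^4 K^{-3/2}$, after which Cauchy--Schwarz with Parseval on $\|f_2\|_2\|f_3\|_2$ closes the estimate.

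The main technical obstacle is balancing these regime-dependent estimates to extract the precise exponent $K^{-29/6}$: the crudest handling of each regime (respectively the $L^1$--$L^\infty$ split on $\mathfrak{M}_1(j_1)$, the trivial treatment of the annulus, and Cauchy--Schwarz on the bulk of $\mathfrak{m}_1(j_1)$) falls short of $K^{-29/6}$ on its own, yielding exponents closer to $K^{-7/2}$ or $K^{-2}$. Closing this gap requires exploiting simultaneously the integrated $L^1$ control of $f_1$ over $\mathfrak{M}_1(j_1)$, the Diophantine-driven strong bounds on whichever two of the three factors remain active at each $\theta$ (in particular, using the analogous disjointness of $\mathfrak{M}_2(j_2)$ and $\mathfrak{M}_3(j_3)$ where applicable), and a finer partition of the bulk region according to the centred base-$g$ structure of $(g-1)(\theta + j_1/k_1)$ so that the stronger bound $|f_1| \ll g^K K^{-12}$ of \cref{FM very good g} is brought to bear on the portion with many non-zero digits. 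Carefully tracking logarithmic factors and verifying that no single contribution exceeds $g^{2K} K^{-29/6}$ is the principal accounting step.
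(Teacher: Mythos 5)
There is a genuine gap, and it is essentially the one you flag yourself: your own accounting stops at roughly $g^{2K}K^{-7/2}$ on the shifted major arcs and $g^{2K}K^{-2}$ on the bulk, and neither of these is even below the main term $\asymp g^{2K}K^{-9/2}$ of \cref{Main theorem N count}, let alone $g^{2K}K^{-29/6}$; the vague "finer partition" you propose at the end does not close this. The missing idea is that the disjointness you prove --- for translates of the narrow major arcs $\mathfrak{M}$, via the denominator bound $\V{(g-1)(j_1k_2-j_2k_1)/(k_1k_2)}\geq 1/(k_1k_2)$ --- is the easy and insufficient half of what is needed. The paper instead proves (\cref{translate bad arcs overlap}) that translates by $j_i/k_i$ of the much larger set $\fewnz=\{\theta:\ w_K((g-1)\theta)\leq\ell\}$ cannot intersect three at a time: if two of the shifted points both had $w_K\leq\ell$, then by approximate subadditivity of $w_K$ (\cref{wk additive short ver}, via \cref{lemma: frac part to few non-zero}) the rational $(g-1)(j_1k_2-j_2k_1)/(k_1k_2)$ would satisfy $w_K\ll_g\log K$, contradicting \cref{1/k reciprocal}, which forces $w_K\gg_g K/\log K$ for such a fraction (this is exactly where \cref{k cond N} enters). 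Consequently, at \emph{every} $\theta$ at least one index $i$ has $|f_i(\theta+j_i/k_i)|\ll_g g^KK^{-12}$ by \cref{FM very good g}, and two applications of H\"older plus Parseval give $|G(j_1,j_2,j_3)|\leq\sup_\theta\big(\prod_i|f_i(\theta+j_i/k_i)|\big)^{1/3}\big(\prod_i|S_i|\big)^{1/3}\ll_g(g^{3K}K^{-13})^{1/3}\cdot g^KK^{-1/2}=g^{2K}K^{-29/6}$. Your proposal never obtains strong bounds on two factors simultaneously at points where $\theta+j_1/k_1$ has few non-zero digits but lies outside $\mathfrak{M}$: there your best input is $g^K(\log K)^4K^{-3/2}$ on one factor (from \cref{minor arc few non zero}) and $L^2$ control on the other two, which caps you at $K^{-2}$.

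A second, quantitative error: you declare the annulus $K^{3/4}g^{-K}\leq\V{(g-1)(\theta+j_1/k_1)}\leq g^{-K+2\ell^2}/(g-1)$ "effectively negligible" by its measure, but that measure is $\ll g^{-K+2\ell^2}$ and $g^{2\ell^2}=\exp(\Theta_g((\log K)^2))$ is superpolynomial in $K$ (recall $\ell\asymp_g\log K$ from \cref{l choice}); since every pointwise saving you invoke on $f_1,f_2,f_3$ is only polynomial in $K$, a measure-times-sup estimate on this region cannot yield $O_g(g^{2K}K^{-A})$ for any fixed $A$. The paper avoids measure arguments altogether for precisely this reason: the H\"older/Parseval step combined with the pointwise $K^{-12}$ bound on at least one factor handles all $\theta$, annulus included, uniformly. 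So your route cannot be repaired by more careful bookkeeping alone; it needs the digit-count lower bound for the rationals $(g-1)(j_sk_t-j_tk_s)/(k_sk_t)$ together with the subadditivity of $w_K$, i.e.\ the content of \cref{translate bad arcs overlap}.
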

To achieve this, we show that at any point $\theta\in [0,1]$, there exists $i\in \{1,2,3\}$ such that $f_{i}(\theta+j_{i}/k_{i})$ is very small, enough to kill off the contribution from the other terms. We are now ready to prove \cref{Main theorem N count} using \cref{Main theorem S count}, and assuming \cref{G small}. 
\begin{proof}[Proof of \cref{Main theorem N count}]
From \cref{to bound G contrib},
\[r_{\mathcal{N}_{1}+\mathcal{N}_{2}+\mathcal{N}_{3}}(M)=\frac{|\mathcal{J}|}{k_{1}k_{2}k_{3}}r_{S_{1}+S_{2}+S_{3}}(M)+\frac{1}{k_{1}k_{2}k_{3}}\sum_{\substack{0\leq j_{i}<k_{i}\\(j_{1},j_{2},j_{3})\notin \mathcal{J}}}G(j_{1},j_{2},j_{3}).\]
\cref{G small} gives  $G(j_{1},j_{2},j_{3})\ll_{g}g^{2K}K^{-29/6}$ for $(j_{1},j_{2},j_{3})\notin \mathcal{J}$. As there are at most $k_{1}k_{2}k_{3}$ such tuples, we have
\begin{equation}\label{N proof eq 1}
    r_{\mathcal{N}_{1}+\mathcal{N}_{2}+\mathcal{N}_{3}}(M)=\frac{|\mathcal{J}|}{k_{1}k_{2}k_{3}}r_{S_{1}+S_{2}+S_{3}}(M)+O_{g}(g^{2K}K^{-29/6}).
\end{equation}
This proves the first part of \cref{Main theorem N count}, as
\begin{equation}\label{J size}
    |\mathcal{J}|=\prod_{i=1}^{3}(g-1,k_{i})
\end{equation}
from the definition of $\mathcal{J}$ given in \cref{J defn}. To establish the second part of \cref{Main theorem N count}, we use \cref{Main theorem S count}. This gives that
\[r_{S_{1}+S_{2}+S_{3}}(M)=\frac{(g-1)M^{2}}{2(2\pi\sigma^{2}K)^{3/2}}+O_{g}(g^{2K}(\log K)^{4}K^{-7/4}),\]
and plugging this into \cref{N proof eq 1} gives
\[  r_{\mathcal{N}_{1}+\mathcal{N}_{2}+\mathcal{N}_{3}}(M)=\frac{(g-1)|\mathcal{J}|M^{2}}{2k_{1}k_{2}k_{3}(2\pi\sigma^{2}K)^{3/2}}+O_{g}\Big (\frac{|\mathcal{J}|g^{2K}(\log K)^{4}}{k_{1}k_{2}k_{3}K^{7/4}}+\frac{g^{2K}}{K^{29/6}}\Big).\]
As $|k_{i}-\mu_{K}|\ll_{g} 1$ from \cref{k cond S}, and $|\mathcal{J}|\leq (g-1)^{3}$, this error term is $O_{g}(g^{2K}(\log K)^{4}K^{-19/4})$. Additionally using that $\mu_{K}=(g-1)K/2$, we have
    \[\frac{1}{k_{1}k_{2}k_{3}}=\frac{8}{((g-1)K)^{3}}+O_{g}(K^{-4}).\]
Thus
\[r_{\mathcal{N}_{1}+\mathcal{N}_{2}+\mathcal{N}_{3}}(M)=\frac{4|\mathcal{J}|M^{2}}{(g-1)^{2}(2\pi\sigma^{2})^{3/2}K^{9/2}}+O_{g}\Big (\frac{g^{2K}(\log K)^{4}}{K^{19/4}}\Big).\]
Finally, using \cref{J size} we recover the main term stated in \cref{Main theorem N count}.
\end{proof}
\subsection{Translates of points with few non-zero digits}
In order to prove \cref{G small}, we isolate the following subset of $\R/\Z$, \begin{equation}\label{B defn}
    \fewnz\coloneq\{\theta \in \R/\Z \st w_{K}((g-1)\theta)\leq \ell\},
\end{equation} which includes the majors arcs, as well as some of the minor arc points. Here, $\ell\asymp_{g} \log K$ is given by \cref{l choice}. Recall that for $\theta\notin \fewnz$, \cref{FM very good g} gives the bound
\[f_{i}(\theta)\ll_{g} g^{K}K^{-12}.\]
The next proposition states that translates of $\theta$ by certain multiples of $k_{i}^{-1}$ cannot all simultaneously lie in $\mathcal{B}$. 
\begin{prop}\label{translate bad arcs overlap}
    Let $k_{1},k_{2},k_{3}$ be integers satisfying \cref{k cond S} and \cref{k cond N}, and let $j_{1},j_{2},j_{3}$ be integers such that $0\leq j_{i}\leq k_{i}-1$ and $(j_{1},j_{2},j_{3})\notin \mathcal{J}$. Then for $\mathcal{B}$ as defined by \cref{B defn},
    \[\Big (\fewnz -\frac{j_{1}}{k_{1}}\Big )\cap \Big (\fewnz -\frac{j_{2}}{k_{2}}\Big )\cap \Big (\fewnz -\frac{j_{3}}{k_{3}}\Big )=\emptyset.\]
\end{prop}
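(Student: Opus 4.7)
The plan is a proof by contradiction. Suppose some $\theta \in \R/\Z$ lies in all three translates; set $\alpha_i \coloneq (g-1)(\theta+j_i/k_i) \bmod 1$, viewed as an element of $I_g$, and $\gamma_i \coloneq (g-1)j_i/k_i$, so that $w_K(\alpha_i) \leq \ell$ for $i=1,2,3$. Since $(j_1,j_2,j_3) \notin \mathcal{J}$, at least one $\gamma_i \notin \Z$; relabel so that $\gamma_1 \notin \Z$. For any $j \in \{2,3\}$ I then claim $\gamma_1 - \gamma_j \notin \Z$: this is immediate if $\gamma_j \in \Z$, while if $\gamma_j \notin \Z$ the relation $k_1 k_j \mid (g-1)(j_1 k_j - j_j k_1)$, reduced modulo $k_1$ and combined with $(k_1,k_j)=1$, would force $k_1 \mid (g-1)j_1$, i.e.\ $\gamma_1 \in \Z$. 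Write $\alpha_1 - \alpha_j \equiv p/q \pmod \Z$ in lowest terms; then $q \geq 2$, and since $q \mid k_1 k_j$ with $(k_1 k_j, g)=1$ we also have $(q,g)=1$ and $q \leq k_1 k_j \leq (\mu_K + C_g)^2 \ll_g K^2$. The goal is to derive incompatible upper and lower bounds on $w_K(p/q) = w_K(\alpha_1 - \alpha_j)$.

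\textbf{Upper bound.} Write $\alpha_i = A_i g^{-K} + \eta_i$ where $|\eta_i| \ll_g g^{-K}$ and the integer $A_i$ has at most $\ell$ non-zero centred base-$g$ digits in positions $0, \ldots, K-1$. The raw position-wise differences of $A_1$ and $A_j$ lie in $\{-(g-1),\ldots,g-1\}$ and have at most $2\ell$ non-zero entries. Normalising these to centred base-$g$ only modestly increases the count: each over-range raw digit triggers a carry cascade that terminates the first time it reaches a previously zero position, adding at most one new non-zero digit; a longer cascade must pass through positions whose raw digits already sat at the edge of $\mathcal{R}_g$, and such positions are drawn from the $\leq 2\ell$ raw non-zero support. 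Hence $A_1 - A_j$ has $O_g(\ell)$ non-zero centred digits, and since $|\eta_1 - \eta_j| \ll_g g^{-K}$ perturbs at most $O(1)$ further positions near $K$, one concludes $w_K(\alpha_1 - \alpha_j) \leq c_g \ell$ for a constant $c_g$ depending only on $g$.

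\textbf{Lower bound.} Suppose positions $m+1, \ldots, m+L$ of the centred base-$g$ expansion of $p/q$ are all zero. Writing $x_m \coloneq g^m(p/q) \bmod \Z$ as an element of $I_g$, one has $|x_m| \leq g^{-L}/2$; but $qx_m \in \Z$, so once $L > \log_g q$ we get $|qx_m|<1/2$, forcing $x_m=0$ and hence $q \mid p$, contradicting $\gcd(p,q)=1$ and $q \geq 2$. Therefore every window of $\lceil \log_g q\rceil + 1$ consecutive digits of $p/q$ contains a non-zero one, giving
\[w_K(p/q) \;\geq\; \frac{K}{\lceil \log_g q\rceil + 1} \;\gg_g\; \frac{K}{\log K}.\]

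Combining the two estimates, and using $\ell \asymp_g \log K$, yields $K/\log K \ll_g \log K$, which fails for $K$ sufficiently large in terms of $g$; this supplies the contradiction and proves the proposition. The main obstacle is the carry-propagation bound in the upper-bound step: one must verify carefully that the cumulative effect of cascades in the centred normalisation cannot inflate the non-zero digit count past $O_g(\ell)$, despite the a~priori possibility of long cascades. The other ingredients — the pair-selection using pairwise coprimality of the $k_i$, and the classical zero-run estimate for a rational whose denominator is coprime to the base — are clean algebraic and arithmetic arguments.
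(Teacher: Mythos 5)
Your proposal follows the same global route as the paper: assume a common point $\theta$, select (after relabelling) an index with $(g-1)j_{1}/k_{1}\notin\Z$, pair it with one other index, note that the difference of the two translates is a non-integral rational with denominator dividing $k_{1}k_{j}$ and coprime to $g$, and contradict a lower bound $w_{K}\gg_{g}K/\log K$ for that rational against an upper bound $\ll_{g}\ell\asymp_{g}\log K$ coming from the two translates lying in $\fewnz$. Your non-integrality argument (reduce modulo $k_{1}$, use $(k_{1},k_{j})=1$) and your zero-run lower bound are, in substance, the paper's argument and its \cref{1/k reciprocal}. The one step where you genuinely diverge is the upper bound: you work directly with centred digits and a carry-propagation analysis, whereas the paper avoids digit arithmetic entirely by proving approximate subadditivity of $w_{K}$ (\cref{wk additive short ver}) from the two-sided comparison $w_{K}(\alpha)\asymp_{g}\sum_{i}\V{g^{i}\alpha}^{2}$ (\cref{lemma: frac part to few non-zero}) together with the elementary inequality $\V{\alpha-\beta}^{2}\leq 3\V{\alpha}^{2}+3\V{\beta}^{2}$; that route is two lines and reuses machinery already needed for the minor arcs.

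Your carry argument can be made to work, but as written it has a soft spot precisely at the point you flag. The claim that $\eta_{1}-\eta_{j}$ ``perturbs at most $O(1)$ further positions near $K$'' is not quite right: if the centred digits of $(A_{1}-A_{j})g^{-K}$ just below position $K$ form a run sitting at $\mathcal{R}_{g}^{+}$ (or $\mathcal{R}_{g}^{-}$), a tail perturbation of size $O(g^{-K})$ can trigger a borrow/carry cascade propagating arbitrarily far to the left, not just near $K$. The cascade does terminate at the first zero digit it meets and creates at most one new non-zero position there (using that $\pm1\in\mathcal{R}_{g}$ for $g\geq 3$), and the positions it passes through are already non-zero and hence already counted among the $O_{g}(\ell)$, so the conclusion $w_{K}(\alpha_{1}-\alpha_{j})\ll_{g}\ell$ survives --- but this is an additional argument you must supply, identical in spirit to your treatment of in-range normalisation, rather than a local ``near $K$'' remark. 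With that repaired (or, more simply, replaced by an appeal to \cref{wk additive short ver}), your proof is complete; note also that the dependence on \cref{lemma: frac part to few non-zero} in the paper's version is exactly why the whole argument is restricted to $g\geq 3$, a restriction your digit-level argument inherits anyway through the use of centred expansions with $\pm1\in\mathcal{R}_{g}$.
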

Assuming \cref{translate bad arcs overlap}, we can now prove \cref{G small}.
\begin{proof}[Proof of \cref{G small}]
    Assume that $k_{1},k_{2}$ and $k_{3}$ fulfil \cref{k cond S} and \cref{k cond N}. By two applications of H\"{o}lder's inequality, 
  \[|G(j_{1},j_{2},j_{3})|\leq \sup_{\theta\in [0,1]}\Big (\prod_{i=1}^{3}\Big |f_{i}\Big (\theta+\frac{j_{i}}{k_{i}}\Big ) \Big |\Big )^{1/3}\Big (\prod_{i=1}^{3}\int_{0}^{1}|f_{i}(\theta)|^{2} \Big)^{1/3}.\]
By Parseval's identity, and using the size estimates for each set $|S_{i}|$ from \cref{S set bound}, we have
\[G(j_{1},j_{2},j_{3})\ll_{g} \frac{g^{K}}{\sqrt{K}} \sup_{\theta\in [0,1]}\Big (\prod_{i=1}^{3}\Big |f_{i}\Big (\theta+\frac{j_{i}}{k_{i}}\Big ) \Big |\Big )^{1/3}. \]
Hence it suffices to show that for $(j_{1},j_{2},j_{3})\notin \mathcal{J}$, 
\begin{equation}\label{sup prod bound}
    \sup_{\theta\in [0,1]}\prod_{i=1}^{3}\Big |f_{i}\Big (\theta+\frac{j_{i}}{k_{i}}\Big ) \Big |=O_{g}(g^{3K}K^{-13}).
\end{equation}
For any $\theta\in [0,1]$, \cref{translate bad arcs overlap} gives that at most two of translates $\theta+j_{1}/k_{1},\theta+j_{2}/k_{2},\theta+j_{3}/k_{3}$ can lie in $\fewnz$. Suppose, without loss of generality, that these potential two bad translates are $j_{1}/k_{1}$ and $j_{2}/k_{2}$, so that $\theta+j_{1}/k_{1},\theta+j_{2}/k_{2}$ can potentially lie in $\mathcal{B}$ and $\theta+j_{3}/k_{3}\notin \mathcal{B}$.

We use the the trivial bound of $|S_{i}|\asymp_{g} g^{K}K^{-1/2}$ from \cref{S set bound} to bound $|f_{i}(\theta+j_{i}/k_{i})|$ for $i=1,2$. As $\theta+j_{3}/k_{3}\notin \mathcal{B}$, we use \cref{FM very good g} to bound ${|f_{3}(\theta +j_{3}/k_{3})|\ll_{g} g^{K}K^{-12}}$; combining these bounds gives \cref{sup prod bound}.
\end{proof} 

Our strategy for proving \cref{translate bad arcs overlap} is as follows. We first show that certain rationals have many non-zero digits in their centred base-$g$ expansions. Then we show that if $\alpha$ and $\beta$ both have very few non-zero digits in their centred base-$g$ expansions, so must $\alpha-\beta$. The conditions on the $k_{i}$ given by \cref{k cond N} are required precisely to ensure that rationals of the form 
\[\frac{(g-1)(j_{s}k_{t}-j_{t}k_{s})}{k_{s}k_{t}}\]
have many non-zero digits in their centred expansions, provided that $j_{s}$ and $j_{t}$ are not both integer multiples of $k_{s}/(g-1)$ or $k_{t}/(g-1)$ respectively. This is driven by the following lemma.
\begin{lemma}\label{1/k reciprocal}
    Let $j,k\in \N$ be such that $(k,g)=1$ and $j\nequiv 0\md{k}$. Then
    \[w_{K}(j/k)> \frac{K}{\lceil \log_{g}k\rceil}-1.\]
 \end{lemma}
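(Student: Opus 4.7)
The plan is to show that if $j/k$ had few non-zero centred digits in its first $K$ positions then, by pigeonhole, its expansion would have to contain a long block of consecutive zeros; this forces $g^m j/k$ to be extremely close to an integer for some $m$, and ultimately forces $k\mid j$, contradicting the hypothesis $j\nequiv 0\md{k}$.

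First I would reduce to studying $\alpha := j/k - n \in I_g$ for the appropriate integer $n$; an integer shift does not alter the centred digits after the radix point, so $w_K(\alpha) = w_K(j/k)$. Writing $\alpha = \sum_{i\geq 1}\varepsilon_i g^{-i}$ with $\varepsilon_i \in \mathcal{R}_g$ and setting $s := w_K(j/k)$, I apply pigeonhole to the at most $s$ non-zero positions among $\varepsilon_1,\ldots,\varepsilon_K$: these split $\{1,\ldots,K\}$ into $s+1$ intervals of zero digits with total length $K-s$, so some interval has length $T \geq \lceil(K-s)/(s+1)\rceil$. Fix such a block $\varepsilon_{m+1}=\cdots=\varepsilon_{m+T}=0$ with $m\geq 0$.

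The crux is showing $T \leq \lceil\log_g k\rceil - 1$. Multiplying the expansion by $g^m$ yields $g^m\alpha - N = \sum_{i\geq T+1}\varepsilon_{m+i}g^{-i}$ for some integer $N$, with absolute value at most $\mathcal{R}_g^+ g^{-T}/(g-1)$. Multiplying through by $k$ then gives $|g^m j - kN'| \leq (\mathcal{R}_g^+/(g-1))\cdot kg^{-T}$ for some integer $N'$. A direct check of the two cases (odd and even $g$) shows $\mathcal{R}_g^+/(g-1)<1$ for all $g\geq 3$; hence once $g^T \geq k$, equivalently $T \geq \lceil\log_g k\rceil$, the right hand side is strictly less than $1$, forcing $g^m j = kN'$. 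Combined with $(k,g)=1$ this yields $k\mid j$, a contradiction.

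Combining the two bounds gives $\lceil(K-s)/(s+1)\rceil \leq T \leq \lceil\log_g k\rceil - 1$, which rearranges to $(s+1)\lceil\log_g k\rceil \geq K+1$, and therefore $w_K(j/k) = s > K/\lceil\log_g k\rceil - 1$. The only mildly delicate ingredient is the inequality $\mathcal{R}_g^+/(g-1)<1$ — this is precisely where the restriction $g\geq 3$ enters — but it is immediate from the definition of $\mathcal{R}_g$; no other obstacle arises.
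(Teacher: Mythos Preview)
Your proof is correct and follows essentially the same approach as the paper: both arguments show that a block of $\geq \lceil\log_g k\rceil$ consecutive zero digits in the centred expansion of $j/k$ would force a rational approximation $a/g^m$ to $j/k$ too good to be compatible with $(k,g)=1$ and $k\nmid j$. The only difference is organisational --- the paper inducts to prove $n_i\leq i\lceil\log_g k\rceil$ for the position $n_i$ of the $i$th non-zero digit, whereas you apply pigeonhole to the longest zero-block --- but the underlying Diophantine step is identical.
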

\begin{proof}
    First, note the following fact. Suppose that $a,m$ are integers with $m>0$ and $a<g^{m}$. Then for $j,k$ as in the statement of the lemma, we have
    \begin{equation}\label{diophantine fact}
        \Big \Vert \frac{j}{k}-\frac{a}{g^{m}}\Big \Vert_{\R/\Z}\geq \frac{1}{kg^{m}}. 
    \end{equation}
    From the assumption that $j\nequiv 0\md{k}$, $j/k$ must have at least one non-zero digit after the radix point in its centred expansion. Moreover, there are infinitely many non-zero digits in the expansion, otherwise $j/k$ would be equal to a rational with a denominator that is a power of $g$, contradicting the fact $(k,g)=1$. Let $n_{0}=0$, and let $n_{i}$ denote the index of the $i$\sth non-zero digit after the radix point in the centred expansion of $j/k$. The proof of the lemma will follow from the next claim.
    \begin{claim}\label{claim 1/k}
    For all $i\geq 0$, $n_{i}\leq i\lceil \log_{g}k \rceil.$
    \end{claim}
    Assume that \cref{claim 1/k} holds, and let $M$ be the number of non-zero digits before the $(K+1)$\sth digit, such that $n_{M}\leq K< n_{M+1}$. From \cref{claim 1/k}, $K<n_{M+1}\leq (M+1)\lceil\log_{g} k\rceil$, giving $M>K/\lceil \log_{g} k\rceil -1$. 

    To prove \cref{claim 1/k} we induct on $i$, with the $i=0$ case following immediately by definition of $n_{0}$. Suppose the claim holds for $i=0,\ldots, t-1$, but that $n_{t}>t\lceil \log_{g} k\rceil$. This gives that $n_{t}-n_{t-1}>\lceil \log_{g}K\rceil$. Let $a/g^{n_{t-1}}$ be the rational obtained by truncating the centred base-$g$ expansion of $j/k$ at the $n_{t-1}$\sth digit. By definition of the $n_{i}$, all the digits of $j/k$ strictly between the $n_{t-1}$\sth and the $n_{t}$\sth are all zero, thus
    \[\Big \Vert \frac{j}{k}-\frac{a}{g^{m}}\Big \Vert_{\R/\Z}\leq \frac{1}{g^{n_{t}}}<\frac{1}{kg^{n_{t-1}}},\]
    where we have used that $n_{t}>n_{t-1}+\lceil \log_{g}K\rceil$. The above approximation for $j/k$ contradicts \cref{diophantine fact}, proving \cref{claim 1/k}.
\end{proof} 
We now show that the function $w_{K}$ is roughly additive.
\begin{lemma}\label{wk additive short ver}
    For all $\alpha,\beta \in \R$,
    \[w_{K}(\alpha-\beta)\ll_{g} w_{K}(\alpha)+w_{K}(\beta).\]
\end{lemma}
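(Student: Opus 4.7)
The plan is to derive this as an immediate corollary of \cref{lemma: frac part to few non-zero}, which already sandwiches $w_K(\gamma)$ between constant multiples of the sum $\sum_{i=0}^{K-1} \V{g^i \gamma}^2$. Since $\V{\cdot}$ is a metric on $\R/\Z$, this sum is essentially sub-additive, and the required bound on $w_K$ will follow by chaining together the two halves of that lemma with the triangle inequality.

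Concretely, I would first apply the upper-bound half of \cref{lemma: frac part to few non-zero} to $\alpha - \beta$ to obtain
\[
w_K(\alpha - \beta) \leq 16 g^2 \sum_{i=0}^{K-1} \V{g^i(\alpha - \beta)}^2.
\]
The triangle inequality $\V{g^i(\alpha - \beta)} \leq \V{g^i \alpha} + \V{g^i \beta}$ combined with the elementary bound $(a+b)^2 \leq 2(a^2 + b^2)$ then gives
\[
\sum_{i=0}^{K-1} \V{g^i(\alpha - \beta)}^2 \leq 2 \sum_{i=0}^{K-1} \V{g^i \alpha}^2 + 2 \sum_{i=0}^{K-1} \V{g^i \beta}^2.
\]
Finally, applying the other inequality of \cref{lemma: frac part to few non-zero} (namely $\sum_{i=0}^{K-1} \V{g^i \alpha}^2 \leq w_K(\alpha)$, and similarly for $\beta$) bounds the right-hand side by $2(w_K(\alpha) + w_K(\beta))$. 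Chaining the three inequalities together yields $w_K(\alpha - \beta) \leq 32 g^2 (w_K(\alpha) + w_K(\beta))$, which is the required $\ll_g$ bound.

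I do not expect a genuine obstacle in this argument; everything is a short consequence of \cref{lemma: frac part to few non-zero} together with the triangle inequality on $\R/\Z$. It is worth noting that the proof relies crucially on the hypothesis $g \geq 3$, since \cref{lemma: frac part to few non-zero} fails in base $2$; the corresponding statement for base $2$ would presumably be handled analogously using the function $d_K$ from \cref{base 2 remark}, using the fact that $d_K$ likewise controls $\sum_{i=0}^{K-1} \V{2^i \alpha}^2$ up to multiplicative constants.
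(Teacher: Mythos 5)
Your proof is correct and is essentially the same as the paper's: both bound $w_K(\alpha-\beta)$ above and below via \cref{lemma: frac part to few non-zero} and use the triangle inequality for $\Vert\cdot\Vertr$ together with the elementary bound $(a+b)^2\ll a^2+b^2$. The only differences are cosmetic (you track the explicit constant $32g^2$, the paper uses the factor $3$ in place of $2$).
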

\begin{proof}
    From the upper bound on $w_{K}$ given by \cref{lemma: frac part to few non-zero}, 
    \[w_{K}(\alpha-\beta)\asymp_{g} \sum_{i=1}^{K}\V{g^{i}(\alpha-\beta)}^{2}.\]
    Furthermore, we have that
    \begin{equation}
        \V{\alpha-\beta}^{2}\leq (\V{\alpha}+\V{\beta})^{2}\leq 3\V{\alpha}^{2}+3\V{\beta}^{2}.
    \end{equation}
    for all $\alpha,\beta \in \R$. Thus
    \[\sum_{i=1}^{K}\V{g^{i}(\alpha-\beta)}^{2}\leq 3\sum_{i=1}^{K}(\V{g^{i}\alpha}^{2}+\V{g^{i}\beta}^{2})\ll_{g}w_{K}(\alpha)+w_{K}(\beta),\]
    now using the lower bound on $w_{K}$ from \cref{lemma: frac part to few non-zero}. Note that the implicit constant in the above inequality can be determined from \cref{lemma: frac part to few non-zero}, and only depends on $g$.
\end{proof}
Recall from \cref{J defn} that  
\[ \mathcal{J}\coloneq\{(j_{1},j_{2},j_{3})\in \Z^{3}\st j_{i}\in \{0,\tfrac{k_{i}}{g-1},\ldots,\tfrac{(g-2)k_{i}}{g-1}\}\cap \Z \textrm{ for }i=1,2,3\},\]
and from \cref{B defn} that $\mathcal{B}\coloneq\{\theta \in \R/\Z\st w_{K}((g-1)\theta)\leq \ell\}$, where $\ell\asymp_{g}\log K$. We may now prove \cref{translate bad arcs overlap}. 
\begin{proof}[Proof of \cref{translate bad arcs overlap}.] 
     Assume that $k_{1},k_{2}$ and $k_{3}$ fulfil conditions \cref{k cond S} and \cref{k cond N}, but that there exist integers $j_{1},j_{2},j_{3}$ with $0\leq j_{i}\leq k_{i}-1$ for $i=1,2,3$, and $(j_{1},j_{2},j_{3})\notin \mathcal{J}$ with
     \begin{equation}\label{overlap bad set exists}
         \Big (\fewnz -\frac{j_{1}}{k_{1}}\Big )\cap \Big (\fewnz -\frac{j_{2}}{k_{2}}\Big )\cap \Big (\fewnz -\frac{j_{3}}{k_{3}}\Big )\neq \emptyset.
     \end{equation}
     As $(j_{1},j_{2},j_{3})\notin \mathcal{J}$, there exists at least one index $i$, $i\in\{1,2,3\}$, such that
     \begin{equation}\label{j assumption}j_{i}\notin \Big \{0,\frac{k_{i}}{g-1},\ldots, \frac{(g-2)k_{i}}{g-1}\Big\}\cap \Z.\end{equation}
     Without loss of generality, assume that this term is $j_{1}$. We will show, without any assumptions on $j_{2}$, that
     \begin{equation}\label{overlap bad set with good j1}
         \Big (\fewnz-\frac{j_{1}}{k_{1}}\Big )\cap \Big (\fewnz -\frac{j_{2}}{k_{2}}\Big )=\emptyset.
     \end{equation}
     Suppose that \cref{overlap bad set with good j1} doesn't hold, so that there exists some $\theta$ such that $\theta\in (\fewnz -j_{1}/k_{1})\cap  (\fewnz -j_{2}/k_{2} )$. Then the following two translates of $\theta$, $\theta+j_{1}/k_{1},\theta+j_{2}/k_{2}$, both lie in $\mathcal{B}$ or in other words, both translates of $\theta$ simultaneously have few non-zero digits:
     \begin{equation}\label{simult few non-zero}
         w_{K}\Big ((g-1)\Big (\theta+\frac{j_{1}}{k_{1}}\Big )\Big)\leq \ell \textrm{ and } w_{K}\Big ((g-1)\Big (\theta+\frac{j_{2}}{k_{2}}\Big )\Big)\leq \ell.
     \end{equation}
Let $\alpha_{1}\coloneq (g-1)(\theta+j_{1}/k_{1})$ and $\alpha_{2}\coloneq (g-1)(\theta+j_{2}/k_{2})$, and note that 
\begin{equation}\label{alpha fraction}\alpha_{1}=\alpha_{2}+(g-1)\frac{j_{1}k_{2}-j_{2}k_{1}}{k_{1}k_{2}}.\end{equation}
From \cref{wk additive short ver}, and \cref{simult few non-zero}, we have
\begin{equation}\label{rational few nonzero}
    w_{K}\Big ((g-1)\frac{j_{1}k_{2}-j_{2}k_{1}}{k_{1}k_{2}}\Big )=w_{K}(\alpha_{1}-\alpha_{2})\ll_{g} w_{K}(\alpha_{1})+w_{K}(\alpha_{2})\ll_{g} \log K.
\end{equation}
Here we have used that $\ell \asymp_{g} \log K$ from \cref{l choice}. We will use this to derive a contradiction by showing that $(g-1)(j_{1}k_{2}-j_{2}k_{1})/k_{1}k_{2}$ has many non-zero digits in its centred expansion from \cref{1/k reciprocal}.

Let $j\coloneq (g-1)(j_{1}k_{2}-j_{2}k_{1})$, and $k\coloneq k_{1}k_{2}$. To see that the assumptions of \cref{1/k reciprocal} hold for the rational $j/k$, first note that we have $(k,g)=1$ from \cref{k cond N}. Furthermore, $j\nequiv 0 \md{k}$, as otherwise we would have $(g-1)j_{1}k_{2}\equiv 0\md{k_{1}}$. As $(k_{1},k_{2})=1$, this would imply that $(g-1)j_{1}\equiv 0\md{k_{1}}$, contradicting \cref{j assumption}. Therefore from \cref{wk additive short ver},
\begin{equation*}
    w_{K}\Big (\frac{(g-1)(j_{1}k_{2}-j_{2}k_{1})}{k_{1}k_{2}}\Big )> \frac{K}{\lceil \log_{g}k_{1}k_{2}\rceil}-1\gg_{g}\frac{K}{\log K},
\end{equation*}
where we have also used that $k_{1}k_{1}\asymp_{g} K^{2}$ from \cref{k cond S}. This contradicts \cref{rational few nonzero} for sufficiently large $K$.
\end{proof} 

\subsection{An explicit choice of $k_{1}, k_{2}$ and $k_{3}$}\label{section k choice}
We may now prove \cref{Main theorem 1}. This follows immediately from \cref{Main theorem N count}, provided that an appropriate choice of $k_{1},k_{2}$ and $k_{3}$ exist for all sufficiently large $K$ and $M$. Recall that $k_{1},k_{2},k_{3}$ are integers fulfilling the conditions given by \cref{k cond S} and \cref{k cond N}. We restate these conditions here:
\begin{gather*}
    k_{1}+k_{2}+k_{3}\equiv M \md{g-1}, \hspace{1em} |k_{i}-\mu_{K}|\leq C_{g}, \nonumber\\
    (k_{i},g)=1 \textrm{ for }i=1,2,3 \textrm{ and }(k_{i},k_{j})=1 \textrm{ for }i,j\in \{1,2,3\} ,i\neq j,
\end{gather*}
where \[C_{g}\coloneq g(g-1)\prod_{p\leq 10g^{2}}p.\] Note that the product in the above definition is over primes. We show that such a choice of $k_{1},k_{2}$ and $k_{3}$ exists for all $g,K$ and $M$.  Let $K_{0}$ be such that 
\begin{equation}\label{K0 choice }K_{0}\equiv 0\md{C_{g}}\textrm{ and }|K_{0}-\mu_{K}|\leq C_{g}/2.\end{equation}
We define $k_{i}\coloneq r_{i}+K_{0}$ for $i=1,2,3$, with $r_{i}$ to be determined by the following process. Let $r_{1}=1$, and let $r_{2}$ be the smallest prime greater than $g$. Clearly $|k_{1}-\mu_{K}|\leq C_{g}/2+1$, and we also have $|k_{2}-\mu_{K}|\leq C_{g}/2+2g+2$. Furthermore, as $k_{i}\equiv r_{i}\md{g}$ by definition of $C_{g}$, both $k_{1}$ and $k_{2}$ are coprime to $g$ for these choices of $r_{1}$ and $r_{2}$. 

It remains to choose $r_{3}$. As $k_{1}+k_{2}+k_{3}\equiv r_{1}+r_{2}+r_{3}\md{g-1}$, we need $r_{3}\equiv M-r_{2}-1\md{g-1}$. Let $a\in\{0,\ldots, g-2\}$ be such that $a\equiv M-r_{2}-1\md{g-1}$. We choose $\lambda\geq 0$ such that $r_{3}\coloneq a+\lambda(g-1)$ and so that the remaining conditions on $k_{3}$ are satisfied. 

If $a\neq 1$, let $\lambda_{0}\coloneq a-1$, otherwise let  $\lambda_{0}\coloneq g$. In either case, let $\lambda_{1}\coloneq \lambda_{0}+g$. We cannot have $r_{2}\mid a+\lambda_{1}(g-1)$ and $r_{2}\mid (a+\lambda_{0}(g-1))$, as this implies $r_{2}\mid g(g-1)$, which contradicts the fact that $r_{2}>g$ and $r_{2}$ is prime. Let $\lambda$ be either $\lambda_{0}$ or $\lambda_{1}$ as appropriate to ensure that $r_{2}\nmid r_{3}$.

By construction, $k_{1}+k_{2}+k_{3}\equiv M \md{g-1}$. As $-1\leq \lambda\leq 2g$, $|k_{3}-\mu_{K}|\leq C_{g}/2+a+|\lambda|(g-1)\leq C_{g}$. Moreover, $k_{3}\equiv r_{3}\equiv a-\lambda \md{g}$. In all cases, $\lambda\equiv a-1\md{g}$, and so $k_{3}\equiv 1\md{g}$, which ensures that $(k_{3},g)=1$. 

It remains to show that $k_{i},k_{j}$ are coprime. If not, any prime $q$ dividing both $k_{i}$ and $k_{j}$ must be bounded by $|k_{i}-k_{j}|$. However this is in turn bounded by $\max (|r_{1}-r_{2}|,|r_{2}-r_{3}|,|r_{1}-r_{3}|)<10g^{2}$, so we must have $q\mid C_{g}$. Hence $q\mid K_{0}$, and thus $q$ divides $r_{i}$ and $r_{j}$, contradicting the fact that $r_{1},r_{2},r_{3}$ are pairwise coprime by construction. 

\appendix
\section{Proof of \cref{FM bound 2}}\label{section FM bound}
The direct statement of the exponential sum bound, \cref{FM bound 2}, does not appear as a named theorem in \cite{fouvry2005entiers}. Instead, it appears in the proof of their result \citep[Theorem 1.2
]{fouvry2005entiers}. For completeness, we sketch the proof of \cref{FM bound 2} here, following the necessary parts of the proof of \citep[Theorem 1.2]{fouvry2005entiers}. Let us restate the result.
\begin{fm-rpt}For $\theta \in \R/\Z$,
     \[|f_{i}(\theta)| \leq g^{K}\exp \Big (-\frac{1}{2g}\sum_{i=0}^{K-1}\Vert g^{i}(g-1)\theta\Vertr^{2}\Big ).\]
\end{fm-rpt}
\begin{proof}
    By using orthogonality to detect the condition that $s_{g}(n)=k_{i}$ for $n\in S_{i}$, we may rewrite $f_{i}(\theta)$ as follows
    \[f_{i}(\theta)=\sum_{n\in S_{i}}e(n\theta)=\int_{0}^{1}e(-x k_{i})\sum_{n<g^{K}}e(n\theta)e(s(n)x)\dif x.\]
    To bound $|f_{i}(\theta)|$, it suffices to bound
    \begin{equation}\label{FM proof integral to bound}
        |f_{i}(\theta)|\leq \Big | \int_{0}^{1}\sum_{n<g^{K}}e(n\theta+s(n)x)\dif x\Big |.
    \end{equation}
    Note that from now on, the specific target digit sum does not appear in the proof; the only relevance of the exact value of $k_{i}$, in so far as this bound is concerned, is the strength of the bound relative to the trivial bound. To control the right hand side of \cref{FM proof integral to bound}, we use the recursive structure of the sum of digits function to isolate the contribution coming from each of the $K$ digits of $n<g^{K}$. We have
    \[\sum_{n<g^{K}}e(n\theta+s(n)x)=\prod_{\nu=0}^{K-1}\sum_{j=0}^{g-1}e(jg^{\nu}\theta+jx),\]
    and thus
    \begin{equation}\label{FM proof f bound}
        |f_{i}(\theta)|\leq \int_{0}^{1}\prod_{\nu=0}^{K-1}\Big |\sum_{j=0}^{g-1}e(jg^{\nu}\theta+jx)\Big |\dif x=\int_{0}^{1}\prod_{\nu=0}^{K-1}|U(g^{\nu}\theta+x)|\dif x
    \end{equation}
    where, following the notation of \cite{fouvry2005entiers}, $U(\alpha)\coloneq \sum_{j=0}^{g-1}e(j\alpha)$. The result \citep[Lemma 3.3]{fouvry2005entiers} gives that for any real $t,t_{0}$,
    \begin{equation}\label{FM lemma 3.3}
        |U(t)U(t+t_{0})|\leq g^{2}\exp\big (-\frac{1}{g}\Vert t_{0}\Vert^{2} \big ).
    \end{equation}
    We will apply \cref{FM lemma 3.3} with $t=g^{\nu}\theta+x$, and $t_{0}=(g-1)g^{\nu}\theta$ for $\nu=0,\ldots, K-2$, so that $t+t_{0}=g^{\nu+1}\theta+x$. Crucially, applying \cref{FM lemma 3.3} removes the dependency on the variable $x$. We have that
    \begin{align}
     \prod_{\nu=0}^{K-1}|U(g^{\nu}\theta+x)|^{2}&=|U(\theta+x)U(g^{K-1}\theta+x)|\prod_{\nu=0}^{K-2}|U(g^{\nu}\theta+x)U(g^{\nu+1}\theta+x)|\nonumber\\
     &\leq g^{2(K-2)}|U(\theta+x)U(g^{K-1}\theta+x)|\exp \Big (-\frac{1}{g}\sum_{\nu=0}^{K-2}\V{ (g-1) g^{\nu}\theta}^{2}\Big ).\label{squared ver}
    \end{align}
    We obtain
    \begin{align}
        \prod_{\nu=0}^{K-1}|U(g^{\nu}\theta+x)|& \leq g^{K-2}|U(\theta+x)U(g^{K-1}\theta+x)|^{1/2}\exp \Big (-\frac{1}{2g}\sum_{\nu=0}^{K-2}\V{ (g-1)g^{\nu}\theta}^{2} \Big ) \nonumber \\
        &\leq g^{K}\exp\Big (-\frac{1}{2g}\sum_{\nu=0}^{K-1}\V{ (g-1) g^{\nu}\theta}^{2}\Big).\label{U prod bound}
    \end{align}  
    The first inequality follows from taking the square root of the expression obtained in \cref{squared ver}. The final equality comes from the fact that $|U(\alpha)|\leq g$ for any $\alpha$, and that $g\exp(-\V{g^{K-1}(g-1)\theta}^{2}/2g)\geq ge^{-1/8g}\geq 1$ for $g\geq 2$. As this expression no longer depends on $x$, combining \cref{FM proof f bound} and \cref{U prod bound} gives the bound stated in the theorem. 
    \end{proof}
\section{Local limit theorem}\label{Section: LLT}
In \cref{section Psi} and \cref{Section: minor arc} we use the following local limit theorem statement, \cref{Local limit theorem application version}. Recall that $X_{0},\ldots, X_{K-1}$ are i.i.d. copies of $Y$, which uniformly takes values in $\{0,\ldots, g-1\}-(g-1)/2$. 
\begin{llt-rpt}[First part]
  Let $\nu$ be an integer. Then 
    \[\prob(X_{0}+\ldots+X_{K}=\nu)=\frac{e^{-x^{2}/2}}{\sqrt{2\pi\sigma^{2}K}}+O_{g}(K^{-3/2}),\]
    where $x=\nu/\sqrt{\sigma^{2}K}$, and $\sigma^{2}=(g^{2}-1)/12$ is the variance of $Y$. 
\end{llt-rpt}
This can be deduced immediately from far more general local limit theorems, such as Theorem 13 of \citep[Ch.~VII]{petrov1972independent}. The theorems in \citep[Ch.~VII]{petrov1972independent} allow one to obtain explicit expressions for arbitrarily many lower order terms and allow for much more general i.i.d. random variables. In order to show \cref{Local limit theorem application version} from one such more general theorem, it is necessary to calculate the cumulants of the random variable $Y$ to obtain the error term we require. As we are only concerned with a very specific family of uniform distributions, and don't require lower order terms, we can prove \cref{Local limit theorem application version} directly in this special case. We do so here, following the proofs of the more general theorems given in \cite{petrov1972independent}. 

We start by analysing the characteristic function of $Y$. Let $\cf(z)$ denote the characteristic function of $Y$, 
           \begin{align}
     \cf(z)&=\E_{Y} e^{izY}\nonumber\\
&=  1+\frac{2}{g}\sum_{j=1}^{(g-1)/2}\cos(jz)\textrm{ for odd }g \textrm{, and }\frac{2}{g}\sum_{j=0}^{(g-2)/2}\cos\Big (\frac{2j+1}{2}z\Big)\textrm{ for even }g.\label{cf Y}
\end{align} 
Around $z=0$, $\cf(z)$ has the following expansion.
\begin{lemma}\label{cf near zero}
        There exists $\varepsilon_{g}>0$ such that for $|z|\leq \varepsilon_{g}$, we have $\cf(z)\geq 1/2$ and 
        \begin{equation*}
            \cf(z)=1-\frac{\sigma^{2}z^{2}}{2}+O_{g}(z^{4}).
        \end{equation*}
    \end{lemma}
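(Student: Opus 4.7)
The plan is to Taylor expand $e^{izY}$ around $z=0$ and take expectations term by term, exploiting the fact that $Y$ is a bounded, mean-zero random variable whose distribution is symmetric about zero.

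First I would write
\[
e^{izY} = 1 + izY - \tfrac{1}{2}z^{2}Y^{2} - \tfrac{i}{6}z^{3}Y^{3} + R(z,Y),
\]
with Lagrange-form remainder $|R(z,Y)| \leq \tfrac{1}{24}|zY|^{4}$. Since $|Y|\leq (g-1)/2$, every moment of $Y$ is bounded by a constant depending only on $g$, so there is no issue in taking expectations and we obtain
\[
\cf(z) = 1 + iz\,\E[Y] - \tfrac{1}{2}z^{2}\,\E[Y^{2}] - \tfrac{i}{6}z^{3}\,\E[Y^{3}] + O_{g}(z^{4}).
\]
The key observation is that the support $\{0,1,\ldots,g-1\}-(g-1)/2$ of $Y$ is symmetric about $0$, so $Y$ and $-Y$ are equidistributed. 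This forces all odd moments to vanish; in particular $\E[Y]=\E[Y^{3}]=0$. Since $\E[Y^{2}]=\sigma^{2}$ by definition, this immediately gives the expansion $\cf(z)=1-\sigma^{2}z^{2}/2+O_{g}(z^{4})$. (As a sanity check, one could instead Taylor expand each cosine in the explicit formula for $\cf(z)$ stated just before the lemma and sum the resulting series; this gives the same result with slightly more bookkeeping.)

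For the lower bound $\cf(z)\geq 1/2$, I would observe that the expansion above implies the existence of a constant $C_{g}>0$ such that $|\cf(z)-1|\leq \tfrac{1}{2}\sigma^{2}z^{2}+C_{g}z^{4}$ for, say, $|z|\leq 1$. One then simply picks $\varepsilon_{g}>0$, depending only on $g$, small enough that $\tfrac{1}{2}\sigma^{2}\varepsilon_{g}^{2}+C_{g}\varepsilon_{g}^{4}\leq 1/2$; any such choice suffices on the interval $|z|\leq \varepsilon_{g}$. I do not anticipate any substantive obstacle: the statement is essentially the standard Taylor expansion for the characteristic function of a bounded, symmetric, mean-zero random variable, and neither the bounded support nor the vanishing odd moments require any subtle argument.
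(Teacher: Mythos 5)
Your proof is correct, and it takes a somewhat different route from the paper's. The paper works directly with the explicit trigonometric formula for $\cf(z)$ given in \cref{cf Y}: it chooses $\varepsilon_{g}$ so that $\cos(jz)\geq 1/2$ for every frequency $j$ appearing in the sum, which yields $\cf(z)\geq 1/2$ termwise, and then Taylor-expands each cosine and sums, computing $\sum_j j^{2}$ (separately for odd and even $g$) to recover $\sigma^{2}=(g^{2}-1)/12$. You instead expand $e^{izY}$ to fourth order with the Lagrange remainder, use the symmetry of the support of $Y$ about $0$ to kill the odd moments, and read off $\E[Y^{2}]=\sigma^{2}$; the lower bound $\cf(z)\geq 1/2$ then falls out of the expansion itself (note $\cf(z)$ is real, by the same symmetry, so $\cf(z)\geq 1-|\cf(z)-1|$ is legitimate). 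Your argument is slightly more abstract and avoids the odd/even case split and the explicit sum-of-squares computation, and it would apply verbatim to any bounded, symmetric, mean-zero random variable; the paper's version buys an explicit, concrete value of $\varepsilon_{g}$ (any $z$ for which all the cosines stay above $1/2$) rather than one defined implicitly through the constant in the $O_{g}(z^{4})$ term, but nothing in the application requires that explicitness. Both proofs are complete and elementary.
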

    \begin{proof}
        Let $\varepsilon_{g}>0$ be sufficiently small such that 
        \[\cos (jz)\geq 1/2 \textrm{ for }j\in [1,g/2]. \]
        For odd $g$, using \cref{cf Y} we obtain:
        \begin{align*}
            \cf(Y)=\frac{1}{g}\Big (1+2\sum_{j=1}\cos(jz) \Big)\geq \frac{1}{g}\Big (1+\frac{g-1}{2}\Big )\geq \frac{1}{2},
        \end{align*}
       and for even $g$,
\[\cf(z)=\frac{2}{g}\sum_{j=0}^{(g-2)/2}\cos\Big (\frac{2j+1}{2}z\Big)\geq \frac{1}{2},\]
thus proving the first part of the claim. Moreover, $\varepsilon_{g}$ only depends on $g$, and so uniformly in $z$, depending only on $g$, we can expand $\cos(jz)$ in this range 
        \[\cos(jz)=1-\frac{j^{2}z^{2}}{2}+O_{g}(z^{4}), \textrm{ for }j\in [1,g/2].\]
         Plugging this expansion into \cref{cf Y} and using that $\sigma^{2}=(g^{2}-1)/12$, we see that for odd $g$,
         \begin{align*}
             \cf(z)=\frac{1}{g}+\frac{2}{g}\sum_{j=1}^{(g-1)/2}\cos(jz)&=1-\frac{z^{2}}{g}\sum_{j=1}^{(g-1)/2}j^{2}+O_{g}(z^{4})=1-\frac{\sigma^{2}z^{2}}{2}+O_{g}(z^{4}),
         \end{align*}
         and for even $g$,
         \begin{align*}
             \cf(z)=\frac{2}{g}\sum_{j=0}^{(g-2)/2}\cos\Big (\frac{2j+1}{2}z\Big)&=1-\frac{z^{2}}{g}\sum_{j=0}^{(g-2)/2}\big(\frac{2j+1}{2}\Big)^{2}+O_{g}(z^{4})\\
             &=1-\frac{\sigma^{2}z^{2}}{2}+O_{g}(z^{4}).\qedhere
         \end{align*}
    \end{proof}
    We may now prove \cref{Local limit theorem application version}.
   \begin{proof}[Proof of \cref{Local limit theorem application version}]
    By orthogonality, 
      \begin{align}  \prob(X_{1}+\ldots+X_{K}=\nu)&=\E\Big(\frac{1}{2\pi}\int_{-\pi}^{\pi}e^{-it\nu}e^{it(X_{1}+\ldots+X_{K})}\dif t\Big)=\frac{1}{2\pi}\int_{-\pi}^{\pi}e^{-it\nu}\cf(t)^{K}\dif t\nonumber\\ 
        &=\frac{1}{2\pi\sigma K^{1/2}}\int_{-\pi\sigma K^{1/2}}^{\pi\sigma K^{1/2}}e^{-itx}\cf\Big(\frac{t}{\sigma K^{1/2}}\Big)^{K}\dif t,\label{cf integral}
    \end{align}
    where $x=\nu/\sigma K^{1/2}$. To establish \cref{Local limit theorem application version} then, it suffices to show that
    \begin{align}\label{int for llt to show}
        \int_{-\pi\sigma K^{1/2}}^{\pi\sigma K^{1/2}}e^{-itx}\cf\Big(\frac{t}{\sigma K^{1/2}}\Big)^{K}\dif t=\sqrt{2\pi}e^{-x^{2}/2}+O_{g}(K^{-1}).
    \end{align}    
    We approximate the characteristic function $\cf(t/\sigma K^{1/2})$ close to 0 to obtain the main term in \cref{Local limit theorem application version}, and bound the remaining contribution by $K^{-3/2}$. Let $\varepsilon_{g}$ be as in \cref{cf near zero}. We show that for $|t|< K^{1/4}\varepsilon_{g}$, 
    \begin{equation}\label{char function approx}
        \Big |\cf\Big(\frac{t}{\sigma K^{1/2}}\Big)^{K}-e^{-t^{2}/2} \Big |\ll_{g} e^{-t^{2}/2}t^{4}K^{-1}.
    \end{equation}
    To show \cref{char function approx}, we expand $\log \cf(z)$ in terms of $z$. For $|z|\leq \varepsilon_{g}$, we have $\varphi_{Y}(z)\geq 1/2$ and in particular, $\cf(z)$ is positive in this range so $\log \cf(z)$ is defined. For $|z|\leq \varepsilon_{g}$, let $w=1-\cf(z)$. We have that $|w|\leq 1/2$ and thus
    \begin{align*}
        \log \cf(z)&=\log (1-w)=-w+O_{g}(w^{2}).
    \end{align*}
From \cref{cf near zero} and the definition of $w$, we have
    \begin{equation}\label{log cf expansion}
        \log \cf(z)=-\frac{\sigma^{2}z^{2}}{2}+O_{g}(z^{4})
    \end{equation}
    for $|z|\leq \varepsilon_{g}$. We apply \cref{log cf expansion} with $z=t/\sigma K^{1/2}$ in the range $|t|\leq K^{1/4}\varepsilon_{g}$. Note that $\sigma^{-2}K^{-1/4}\leq 1$ for all $g\geq 2$ and $K\geq 1$, so in this range, $|t/\sigma K^{1/2}|\leq \varepsilon_{g}$.
       
    Hence we have
    \[\log \cf\Big(\frac{t}{\sigma K^{1/2}} \Big)^{K}=-\frac{t^{2}}{2}+O_{g}(t^{4}K^{-1})\]
    which gives
    \[\cf\Big(\frac{t}{\sigma K^{1/2}} \Big)^{K}=e^{-t^{2}/2}\exp \big( O_{g}(t^{-4}K^{-1})\big).\]
    Using that $e(O_{g}(t^{-4}K^{-1}))=1+O_{g}(t^{-4}K^{-1})$, as $|t|\leq K^{1/4}\varepsilon_{g}$, gives the bound stated in \cref{char function approx}. We can now estimate \cref{cf integral} by evaluating the following integrals:
    \begin{gather*}I_{1}\coloneq \int_{-\infty}^{\infty }e^{itx}e^{-t^{2}/2}\dif t, \hspace{0.5em}I_{2}\coloneq \int_{|t|>K^{1/4}\varepsilon_{g}}e^{itx}e^{-t^{2}/2}\dif t\\
   I_{3}\coloneq \int_{|t|<K^{1/4}\varepsilon_{g}}\Big | \cf\Big (\frac{t}{\sigma K^{1/2}}\Big )^{K}-e^{-t^{2}/2}\Big |\dif t, \hspace{0.5em} I_{4}=\int_{K^{1/4}\varepsilon_{g}\leq |t|\leq \sigma K^{1/2}}\Big | \cf\Big (\frac{t}{\sigma K^{1/2}}\Big )\Big |^{K}\dif t.\end{gather*}
Thus 
\[ \int_{-\pi\sigma K^{1/2}}^{\pi\sigma K^{1/2}}e^{-itx}\cf\Big(\frac{t}{\sigma K^{1/2}}\Big)^{K}\dif t=I_{1}+O_{g}(I_{2}+I_{3}+I_{4}).\]
The integral $I_{1}$ is 
    \[\int_{-\infty}^{\infty}e^{itx}e^{-t^{2}/2}\dif t=\sqrt{2\pi}e^{-x^{2}/2}.\]
Thus in order to show \cref{int for llt to show}, it remains to bound the other integral terms. We have 
    \[I_{2}\leq \int_{|t|>K^{1/4}\varepsilon_{g}}e^{-t^{2}/2}\dif t\ll_{g} K^{-10} \textrm{ and }I_{3}\ll \frac{1}{K}\int_{|t|<K^{1/4}\varepsilon_{g}}e^{-t^{2}/2}t^{4}\dif t\ll_{g} \frac{1}{K},\]
using \cref{char function approx} for $I_{3}$. Finally we turn to $I_4$. For $t\in [K^{1/4}\varepsilon_{g},\sigma K^{1/2}]$, we have that $|\cf(t/\sigma K^{1/2})|\leq \cf(\varepsilon_{g}\sigma K^{1/4})$. To see that $\cf(\varepsilon_{g}\sigma K^{1/4})>0$, we use \cref{cf near zero} to expand the characteristic function
 \[\cf(\varepsilon_{g}\sigma K^{1/4})=1-\frac{\varepsilon_{g}^{2}}{2K^{1/2}}+O_{g}(K^{-1})\leq 1-\frac{\varepsilon_{g}^{2}}{4K^{1/2}}\]
 for $K$ sufficiently large in terms of $g$. Thus
 \[I_{4}\leq \int_{|t|\in [K^{1/4}\varepsilon_{g},\sigma K^{1/2}]}\Big (1-\frac{\varepsilon_{g}}{4\sigma^{2}K^{1/2}}\Big )^{K}\leq \sigma K^{1/2}\Big(1-\frac{\varepsilon_{g}}{4\sigma^{2}K^{1/2}}\Big )^{K} \ll_{g}K^{-10}. \qedhere\]
 \end{proof}
\bibliographystyle{abbrv}
\bibliography{bibliography} 
\end{document}